\def\ifl{\iffalse }
\def\bc{\begin{center}} \def\ec{\end{center}}
\def\ba{\begin{array}} \def\ea{\end{array}}
\def\bea{\begin{eqnarray}} \def\eea{\end{eqnarray}}
\def\beaa{\begin{eqnarray*}} \def\eeaa{\end{eqnarray*}}
\theoremstyle{definition}
\newtheorem{thm}{Theorem}[section]
\newtheorem{prop}[thm]{Proposition}
\newtheorem{lem}[thm]{Lemma}
\theoremstyle{remark}
\newtheorem{rem}[thm]{Remark}
\newtheorem*{rem*}{Remark}
\numberwithin{equation}{section}
\newcommand{\R}{\mathbb{R}}
\newcommand{\supp}{\mathop{\mathrm{supp}}}
\newcommand{\pa}{\partial}
\newcommand{\na}{\nabla}
\newcommand{\al}{\alpha}
\newcommand{\be}{\beta}
\newcommand{\de}{\delta}
\newcommand{\Ga}{\Gamma}
\newcommand{\td}{\tilde}
\newcommand{\wtd}{\widetilde}
\newcommand{\Lg}{\langle}
\newcommand{\Rg}{\rangle}
\newcommand{\ls}{\lesssim}
\title[GWP of wave-KG]{Global well-posedness of a two dimensional wave-Klein-Gordon system with small non-compactly supported data}
\author[X. Cheng]{Xinyu Cheng}
 \address{X. Cheng, Research Institute of Intelligent Complex Systems, Fudan University, Shanghai, P.R. China}
\email{xycheng@fudan.edu.cn}
\begin{document}

\maketitle
\begin{abstract}
   In this paper we are interested in the coupled wave and Klein-Gordon equations in $\R^+\times\R^2$. We want to establish the global well-posedness of such system by showing the uniform boundedness of the energy for the global solution without any compactness assumptions on the initial data. In addition, we also demonstrate the pointwise asymptotic behavior of the solution pair. In order to achieve that we apply a modified Alinhac’s ghost weight method together with a newly developed
normal-form framework to remedy the lack of the space-time scaling vector field. Finally we show the global solutions scatter linearly strongly for the Klein-Gordon field $\phi$ and weakly for the wave field $n$ as $t\to+\infty$. To our best knowledge such scattering phenomenon is novel in the literature.
\end{abstract}

\section{introduction}
\subsection{Introduction to the model and main results}
In this work we are interested in certain coupled wave-Klein-Gordon system (wKG) in $\R^+\times\R^2$ that results from the study of differential geometry and plasma physics. More specifically speaking, we first focus on the following two model systems:
\begin{subequations}\label{eq:kg}   
\begin{equation}\label{eq:kg1}
\begin{cases}
&\Box n = A^{\alpha}\partial_{\alpha}(|\phi|^2),
\\
&\Box \phi + \phi = B^{\alpha}\phi\partial_{\alpha}n ;
\end{cases}
\end{equation}  
\begin{equation}\label{eq:kg2}
\begin{cases}
&\Box n = A^{\alpha\beta}\partial_{\alpha\beta}(|\phi|^2),
\\
&\Box \phi + \phi = B n \phi.
\end{cases}
\end{equation}
\end{subequations}
Here the initial data is given by $(\phi,\partial_t \phi, n,\pa_{t}n)|_{t=0}=(\Phi_0, \Phi_1, N_0, N_1),$ $\square = \partial_{tt} - \Delta$ is the standard d'Alembertian and $\Delta$ is the usual Laplacian. We denote $\pa=(\pa_t,\pa_1,\pa_2)$, see Section~\ref{sec:nota} for more information on the notation. The Einstein convention is adopted with the Greek letters running from 0 to 2 and Latin letters running from 1 to 2; $A^{\alpha}, B^{\alpha}, A^{\alpha\beta}, B$ are given constants. We emphasize here that the constants $A^\al,B^\al, A^{\al\be}$ need not obey any null conditions. The unknowns $\phi,n$ take values in $\R^2$ and $\R$ respectively.

 The impetus behind this coupled wKG system \eqref{eq:kg} stems from the integration of two research areas, differential geometry and plasma physics, each offering unique perspectives and insights from different vantage points. The  geometric  problem  concerns  the  stability  of  a  totally  geodesic  wave  map (cf. \cite{AC21,DM10}) from Minkowski space $\R^+\times \R^d$ to a $n-$dimensional manifold $\mathcal{M}^n$. To be more explicit, in \cite{AC21} the following system was formulated:
\begin{equation}\label{eq1-wave-map}
\left\{\begin{aligned}
&-\Box \phi^1 = -2\sum_{k=2}^n\phi^k \pa_\al\phi^k + \text{h.o.t}
\\
&-\Box \phi^k - \phi^k = 2\phi^k \pa_\al \phi^1 + \text{h.o.t.}, \quad k=2,\cdots, n
\end{aligned}\right.
\end{equation}
where $\phi^1$ and $\phi^k$ are scalar functions defined in $\R^+\times\R^d $. Based on this formulation, global stability results on the wave maps in $\R^3$ and higher dimensions have been established in \cite{AC21} ($d\ge 3$), while the 2 dimensional case $(d=2)$ was suggested to be open due to its rather slow decay-in-time. Later \cite{DM10} gave a preliminary answer to the 2 dimensional case, where a vector field method on hyperboloids
dating back to Klainerman and H\"ormander was adopted to show the global well-posedness of \eqref{eq:kg} with small compactly supported initial data. In this article, one of our modest goals is to adopt the usual vector field bootstrap method (armed with the newly-developed normal form strategy for nonlinear wave systems in \cite{Li21}) and eventually remove the compact support requirement. For more discussions on wave maps, we refer the readers to \cite{SS98,Kri07,V70}.

 The significance of such wave map models \eqref{eq1-wave-map} extends beyond its immediate context of differential geometry, finding relevance in both the fields of general relativity and plasma physics (the nonlinear $\sigma$-model including the Skyrme model \cite{S61,LDuke21} for example), where it describes the interaction between Langmuir waves and ion acoustic
waves in plasma via ion density fluctuation and the electric field (cf. \cite{Plasma}). To be clearer, the global well-posedness of \eqref{eq1-wave-map} is profoundly related to the global stability problem of the Klein-Gordon-Zakharov system:
\begin{equation}\label{eq1-Zakharov}\left\{
\aligned
& \Box E^a + E^a = -nE^a,\quad a = 1,2,
\\
& \Box n = \Delta \big(|E^1|^2 + |E^2|^2\big),
\endaligned\right.
\end{equation}
where the ion density $n$ is a scalar function and the electronic field $E=(E^1,E^2)$ take values in $\R^2$. \eqref{eq1-Zakharov} can be dated back to the original Zakharov equation that was firstly introduced in \cite{Zak72} to describe certain type of oscillation of a plasma. To be accurate, it is known from the physical point of view that for strong Langmuir turbulence, the Langmuir phase velocity in the
Klein-Gordon equation in \eqref{eq1-Zakharov} is different (usually about one thousand times as large, cf. \cite{OTT99,Plasma}) from the
ion acoustic phase velocity in the wave equation. {Such phenomenon results from the classic two-fluid, two-timescale theory, taking the huge difference between the mass of an electron and that of an ion into account.} In fact there is no scaling transformation that can make these two speeds
equal (unlike the original Zakharov system as in \cite{Zak72}). Despite all this, such Klein-Gordon-Zakharov system is still regarded as an incredibly interesting model from the mathematical perspective due to its complexity. To be more specific such Klein-Gordon-Zakharov model \eqref{eq1-Zakharov} can be viewed as a nonlinear wave-Klein-Gordon coupled system, therefore numerous techniques have been attempted to tackle the complicated nonlinear coupling structure through the development of wave-related methods, which we put the literature review on hold for the time being (see Section~\ref{Section:HR} below). It is worthwhile pointing out here that the Klein-Gordon-Zakharov system \eqref{eq1-Zakharov} is a special case of our model problem \eqref{eq:kg2} by choosing $A^{00}=A^{\al\be}=0$ for $\al\neq \be$ and $A^{11}=A^{22}=-B^{\al} =1$ for $\al=0,1,2$. 

In this paper, we are interested in the small data global well-posedness (GWP) of the model problem \eqref{eq:kg} and the long-time energy/pointwise asymptotic behavior of the global solutions. Furthermore, we carefully explore the scattering phenomenon of \eqref{eq:kg}. Our first main result states below:

 \begin{thm}\label{thm}
Consider the coupled wave-Klein-Gordon system as in \eqref{eq:kg1} and let $K$ be an integer no less than $12$. For any fixed small parameter $0<\de<\frac12$, there exists small $\varepsilon_0>0$ such that for all $\varepsilon\in (0,\varepsilon_0)$ and all initial data $(\Phi_0,\Phi_1,N_0,N_1)$ satisfying the smallness condition below: 
\begin{align}\label{ID2}
    \|(\Phi_0,\Phi_1)\|_{\text{ID1}}+\|(N_0,N_1)\|_{\text{ID2}}<\varepsilon,
\end{align}
where the initial-data-norms of $(\Phi_0,\Phi_1)$ and $(N_0,N_1)$ are given by{
\begin{equation}\label{ID3}
\begin{split}
   \|(\Phi_0,\Phi_1)\|_{\text{ID1}}= &\|\Lg x \Rg^{K}\Lg \na\Rg^{K+1}\Phi_0\|_2+\|\Lg x \Rg^{K}\Lg \na\Rg^{K}\Phi_1\|_2+\|\Lg x\Rg^{K+1}\log(1+\Lg x\Rg) \Lg \na \Rg^{K}\Phi_0\|_2\\ &+\|\Lg x\Rg^{K+1}\log(1+\Lg x\Rg) \Lg \na \Rg^{K-1}\Phi_1\|_2,
    \\ \|(N_0,N_1)\|_{\text{ID2}}= &\sum_{j=0}^{K-5}\Big(\|\Lg x\Rg^{j+\frac52} \na^{j+3}(\na N_0,N_1)\|_1+\|\Lg x\Rg^{j+\frac52} \na^{j}(\na N_0,N_1)\|_1\Big)\\
    &+\sum_{j=0}^K\|\Lg x\Rg^{j+1}\na^j(\na N_0,N_1)\|_2.
    \end{split}
\end{equation}
}
%   \begin{align}\label{ID1}
% \sum_{j=0}^{K}\|\langle x\rangle^{j+1}\nabla^j(\nabla \Phi_0,\Phi_1)\|_{L^2}
% +\sum_{j=0}^{k_1+3}\|\langle x\rangle^{j+3}\nabla^j(\nabla \Phi_0,\Phi_1)\|_{L^2}
% +\sum_{j=0}^{K}\|\langle x\rangle^{j+1}\nabla^j(\nabla n_0,n_1)\|_{L^2}\le \varepsilon
% \end{align}
% or 
% \begin{align}
% \sum_{j=0}^{K+1}\|\langle x\rangle^{j+2}\nabla^j \Phi_0\|_{L^2}
% +\sum_{j=0}^{K}\|\langle x\rangle^{j+3}\nabla^j \Phi_1\|_{L^2}
% +\sum_{j=0}^{K}\|\langle x\rangle^{j}\nabla^j n_0\|_{L^2}
% +\sum_{j=0}^{K-1}\|\langle x\rangle^{j+1}\nabla^j n_1\|_{L^2}\le \varepsilon.
% \end{align}
We thus can conclude the following:

\texttt{(i).} The Cauchy problem for the \eqref{eq:kg1} system admits a couple of global solutions $(\phi,n)$ in time with the following energy of polynomial growth-in-time:  
\begin{align}\label{ue}
\| \partial \Gamma^{\le K}\phi\|_{2}+\| \Gamma^{\le K}\phi\|_{2}+\|\pa \Gamma^{\le K}n\|_{2}+\|\Ga^{\le K}n\|_2\le C\varepsilon \Lg t\Rg^{\de}
\end{align}
for some constant $C>0$ and $\Ga$ are the usual vector fields excluding the space-time scaling operator $L_0$ (see Section~\ref{sec:nota} for definition). Moreover, we have the following uniform boundedness of the lower-order energy for some constant $C>0$:
\begin{align}\label{ue2}
\| \partial \Gamma^{\le K-1}\phi\|_{2}+\| \Gamma^{\le K-1}\phi\|_{2}+\|\pa\Gamma^{\le K-2}n\|_{2}\le C\varepsilon.
\end{align}

\texttt{(ii).} Such global solutions satisfy the following asymptotic pointwise decay estimate: 
\begin{align}\label{op}
    |\phi(t,x)|\le C_1 \varepsilon \Lg t\Rg^{-1},\quad |\pa n(t,x)|\le C_2 \varepsilon \Lg t\Rg^{-\frac12} \Lg t-|x|\Rg^{-\frac{1+\delta}{2}},
\end{align}
for some positive constants $C_1,C_2>0$. The notation $\Lg t\Rg$ denotes the usual Japanese bracket that we refer to Section~\ref{sec:nota} for more details of the notation.
\end{thm}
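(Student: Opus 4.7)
The plan is to run a standard continuity argument. On a maximal interval $[0,T^*)$ I posit \eqref{ue} and \eqref{ue2} with the constant $C$ replaced by $2C$, and aim to recover them with the stated $C$ for $\vep$ sufficiently small. The admissible vector fields are $\Ga\in\{\pa_\al,\,\Omega_{ij}=x_i\pa_j-x_j\pa_i,\,L_i=t\pa_i+x_i\pa_t\}$, excluding the scaling $L_0=t\pa_t+x^i\pa_i$; this exclusion is forced by the mismatched principal symbols of $\Box$ and $\Box+1$. After commuting \eqref{eq:kg1} with $\Ga^I$ for $|I|\le K$, I apply the standard Klein-Gordon energy identity to $\Ga^I\phi$ and an Alinhac ghost-weight identity with an appropriate weight $q=q(r-t)$, say $q'(s)\gec \Lg s\Rg^{-1-\de}$, to $\Ga^I n$. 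The latter delivers, in addition to the usual $L^2$ control of $\pa \Ga^I n$, a space-time $L^2$ gain on the good (null) derivatives of $\Ga^I n$ integrated against $\Lg t-|x|\Rg^{-1-\de}$. This is the main mechanism for extracting extra decay from the wave field in 2D.

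The commutator source terms produce non-null bilinear expressions such as $\Ga^J\phi\cdot\pa\Ga^{J'}n$, which the ghost weight alone cannot handle because $A^\al,B^\al$ obey no null condition. Here I invoke the normal-form framework of \cite{Li21}: exploiting the Klein-Gordon mass via $\phi=-(\Box+1)\phi+\phi+B^\al\phi\pa_\al n$ (and the analogous wave-side manipulation), each offending bilinear term is rewritten as the time derivative of a cubic energy-level correction plus a genuinely cubic remainder. After integration in time, the correction is absorbed into the energy at the cost of a small constant, and the cubic remainder carries an additional factor of either $\phi$ (decaying like $\Lg t\Rg^{-1}$ as established below) or a good derivative of $n$ (square-integrable in space-time by the ghost-weight gain). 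In either case the resulting integral is finite up to the tolerated $\Lg t\Rg^\de$ growth, which closes \eqref{ue}; repeating the same estimates at order $K-1$, where the losses no longer cascade, closes the sharper \eqref{ue2}.

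For the pointwise decay of the Klein-Gordon field, $|\phi|\lec\vep\Lg t\Rg^{-1}$ follows from applying a Georgiev-type Klein-Gordon $L^\infty$ estimate (which uses only translations, rotations and boosts, not $L_0$) to $\Ga^{\le 2}\phi$ and invoking \eqref{ue2}. For $|\pa n|$ I combine the 2D free-wave dispersive estimate $\|e^{\pm it|\na|}f\|_\infty\lec t^{-1/2}\|f\|_{W^{2,1}}$ for the linear part---this is precisely where the weighted $L^1$ bounds built into $\|(N_0,N_1)\|_{\text{ID2}}$ are essential, since the data are not compactly supported---with a Duhamel estimate for the nonlinearity $A^\al\pa_\al(|\phi|^2)$, using the $\Lg t\Rg^{-1}$ decay of $\phi$ to render the source integrable in a weighted $L^1_x$ sense. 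The additional factor $\Lg t-|x|\Rg^{-(1+\de)/2}$ inside the cone comes from the space-time control on good derivatives of $n$ established above together with a foliated integration argument from the light cone back into the interior.

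The principal obstacle is the triple bind imposed by (a) the 2D wave decay $t^{-1/2}$ sitting at the critical threshold of summability; (b) the generic non-null quadratic coupling, so no classical null form is available; and (c) the absence of $L_0$, which breaks the scale-invariant Klainerman-Sobolev inequality and the conformal energy identity. The normal-form step is the reconciling ingredient: each non-null bilinear interaction is paid for algebraically via the Klein-Gordon mass, and the decay loss from giving up $L_0$ is compensated by the ghost-weight gain on good derivatives. The delicate task is calibrating the weights and the order in which the estimates are chained so that the $\Lg t\Rg^\de$ growth does not compound through the hierarchy, and so that the non-compact spatial tail of the data, controlled only by the polynomially weighted $L^1$ and $L^2$ norms in \eqref{ID3}, feeds cleanly into both the dispersive and the energy branches of the argument.
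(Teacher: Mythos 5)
The proposal captures the broad contours of the bootstrap but misses (or inverts) two of the paper's central ingredients, and the absence of these would make the argument as sketched fail.

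First, you never mention the decomposition $n=n_0+A^\alpha\partial_\alpha\tilde n$, where $\Box n_0=0$ with $(n_0,\partial_t n_0)|_{t=0}=(N_0,N_1)$ and $\Box\tilde n=|\phi|^2$ with zero data. This decomposition (going back to Katayama's treatment of divergence-form sources) is the backbone of the whole proof: it isolates the non-compact initial data in a free wave $n_0$ that can be estimated by the weighted $L^1\to L^\infty$ dispersive lemmas (Lemma~\ref{lemhom}, Lemma~\ref{lemB2}), and it turns $\tilde n$ into an $O(\varepsilon^2)$ perturbation with a source $|\phi|^2$ that carries no derivative. You instead propose a plain Duhamel expansion with the derivative nonlinearity $A^\alpha\partial_\alpha(|\phi|^2)$ still in the source; that derivative ruins the weighted estimates that produce the $\langle t-|x|\rangle^{-(1+\delta)/2}$ factor. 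The key technical tool Lemma~\ref{lem2.6}, which converts $\langle t-r\rangle\partial^2\tilde n$ into $\partial\Gamma^{\le1}\tilde n$ plus $\langle t+r\rangle|\Box\tilde n|=\langle t+r\rangle|\phi|^2$, needs $\Box\tilde n$ to be quadratic in $\phi$ without extra derivatives, and that is exactly what the decomposition provides. The a priori hypothesis \eqref{A2} is in fact formulated on $n_0$ and $\tilde n$ separately, not on $n$.

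Second, you put Alinhac's ghost weight on the wave component $\Gamma^I n$, but the paper applies it to the Klein-Gordon component $\Phi=\Gamma^I\phi$: multiplying $\Box\Phi+\Phi=\cdots$ by $e^p\partial_t\Phi$ with $p=q(r-t)$ yields the good-derivative term $\int e^p q'|T\Phi|^2$ \emph{and}, crucially, the mass term $\int e^p q'|\Phi|^2$. It is this spacetime $L^2$ gain on $\Phi$ itself (not merely on null derivatives of $n$) that absorbs the worst bilinear interaction $\Phi\,\partial n$ via Cauchy--Schwarz. Applying the ghost weight to $\Gamma^I n$ instead gives no mass term and hence no control of $\Gamma^I n$ itself, only of its good derivatives, which is insufficient for the sources $\phi\partial_\alpha n$ the paper must close. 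Relatedly, the "normal form" you describe as Shatah-type cubic corrections to the energy functional does not match what the paper actually does. The paper's normal-form idea is the reformulation $n=n_0+A^\alpha\partial_\alpha\tilde n$ together with the algebraic weighted identities Lemma~\ref{lem2.6}, Lemma~\ref{lem2.7}, and Lemma~\ref{lem2.13}, which trade $\langle t-r\rangle$ weights for equation terms without ever writing down a cubic energy correction. Without these two pieces, the chain of weighted estimates in Lemmas~\ref{lem3.1a}--\ref{lem3.7a} and the closure of the bootstrap in Section~\ref{sec:close} cannot be reproduced.
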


\begin{rem}
There exist a few global well-posedeness results for different wave-Klein-Gordon coupled system (including the totally geodesic wave map model) with
small initial data (cf. \cite{AC21,DW20, Kata12, OTT95, D,DM21,DM10}) in different dimensions. For clarification our main contribution to this problem is to prove the global stability of system \eqref{eq:kg} by obtaining the uniform-in-time
boundedness of the energy with small non-compactly supported initial data in $\R^+\times\R^2$. In order to overcome the difficulties including the slow decay-in-time resulting from the low dimension and the compact support requirement, our approach is mainly based on a newly developed normal-type estimate
where we manage to handle the lack of the space-time scaling operator $L_0$ using vector field method on the usual flat space-time. We also provide a proof of a strong linear scattering (see Theorem~\ref{thm1} below) for the Klein-Gordon field $\phi$ and a weak linear scattering for the wave field $n$ as $t\to+\infty$. Another novelty of our work is in giving a clear description of the smallness conditions of non-compactly supported initial data (see Remark~\ref{rem1.2}).
\end{rem}

\begin{rem}
As will be clear later, our approach to prove the global well-posedness of \eqref{eq:kg1} is to separate $n=n_0+A^\al \pa_\al \td{n}$, where
    \begin{align}
\begin{cases}
\Box n_0=0,\\
(n_0,\partial_t n_0)|_{t=0} =(N_0,N_1),
\end{cases}
\qquad \text{and}\qquad\
\begin{cases}
\Box \td{n}=|\phi|^2,\\
(\td{n},\partial_t \td{n}) |_{t=0}=(0,0).
\end{cases}
\end{align}
In effect the global well-posedness of \eqref{eq:kg2} can be dealt with via the same strategy: we decompose $n=n_0+A^{\al\be} \pa_{\al\be} N$ where
\begin{align}
\begin{cases}
\Box n_0=0,\\
(n_0,\partial_t n_0)|_{t=0} =(N_0,N_1),
\end{cases}
\qquad \text{and}\qquad\
\begin{cases}
\Box N=|\phi|^2,\\
(N,\partial_t N) |_{t=0}=(0,0).
\end{cases}
\end{align}
Our energy bootstrap framework holds in such similar situation while the initial condition is slightly different and we will discuss more about the \eqref{eq:kg2} case in Section~\ref{sec:bcase}.
\end{rem}

\begin{rem}\label{rem1.1}
It is worth mentioning here that usually the best pointwise decay-in-time of $(\phi,n)$ one can expect (ignoring the nonlinear structure) is 
\begin{align}\label{1.8}
    |\phi(t,x)|\ls \Lg t\Rg^{-1},\quad |\pa n(t,x)|\ls \Lg t\Rg^{-\frac12}\Lg t-|x|\Rg^{-\frac12},
\end{align}
 which is obviously weaker than \eqref{op}. To see \eqref{1.8} is the best one can expect, we consider the homogeneous linear wave and Klein-Gordon equation in $\R^+\times\R^2$:
\begin{align}
     \Box u=0,\quad \Box v+v=0,
 \end{align}
with sufficiently nice initial data. Then it follows from the usual Klainerman-Sobolev inequality (cf. Lemma~\ref{KSineq}) and the $L^\infty$ estimate (cf. Lemma~\ref{kgt32}). We refer the readers to \cite{H97} for the wave equation and \cite{G1992} for the Klein-Gordon equation. Such pointwise decay-in-time \eqref{1.8} brings major difficulties in showing the global well-posedness (and linear scattering for the wave component), which will be explained in more details in Section~\ref{sec:mainD}. Fortunately such situation can be improved due to the special nonlinear structure, we pause our discussion in this direction for now and will continue in Section~\ref{sec:ol}.
\end{rem}

\begin{rem}
We remark here that the requirement \eqref{ID2} and \eqref{ID3} on the initial condition is of slightly high regularity {(around 12 times of differentiablity in the Sobolev sense)}. In fact in the existing literature there are many applicable techniques such as multilinear space-time estimates and frequency
decomposition that have been used intensively to reduce
regularity assumptions (cf. \cite{GNW14,GNW14b}); however we do not dig deeper in this direction. Our main contribution in this work is to prove the global stability with non-compactly supported initial data via our flat space-time normal form strategy.
\end{rem}

\begin{rem}\label{rem1.2}
 Another point we would like to emphasize is that the initial assumption \eqref{ID2} is essentially different from the ones in nonlinear waves (cf. \cite{CX22}). More specifically speaking, one can observe from \eqref{ID2} that $\|\Lg x\Rg^K \Lg\na\Rg^{K+1}\Phi_0\|_2<\varepsilon$ is yet strong since even $\Phi_0$ has to obey certain spatial decay properties: $\|\Lg x\Rg^K \Phi_0\|_2<\varepsilon$. Unfortunately such assumption is necessary. This can be explained through a crucial observation that when performing energy estimates with vector field $\Omega_{i0}$, one may run into the quantity $\|\Omega_{i0}^4 \phi|_{t=0}\|_2$. With no loss it suffices to consider $\|\Lg x\Rg^4 \partial_t^4\phi|_{t=0}\|_2$. Note that by applying the Klein-Gordon equation twice, one arrive that
 \begin{align}
     \|\Lg x\Rg^4 \partial_t^4\phi|_{t=0}\|_2\lesssim&  \|\Lg x\Rg^4 \partial_t^2\Delta\phi|_{t=0}\|_2+ \|\Lg x\Rg^4 \partial_t^2\phi|_{t=0}\|_2+ \|\Lg x\Rg^4 \partial_t^2(\pa\phi n)|_{t=0}\|_2\\
     \lesssim& \|\Lg x\Rg^4 \Delta^2\phi|_{t=0}\|_2+ \|\Lg x\Rg^4 \Delta \phi|_{t=0}\|_2+ \|\Lg x\Rg^4 \phi|_{t=0}\|_2+\text{nonlinear terms}.
 \end{align}
As shown above, one already needs to control terms such as $\|\Lg x\Rg^4 \phi|_{t=0}\|_2=\|\Lg x\Rg^4 \Phi_0\|_2$. For more discussion on this we refer the readers to \cite{CX22} and Proposition~\ref{prop} later. 
 
\end{rem}

Our next result states that the couple of solutions $(\phi,n)$ scatters linearly:

\begin{thm}\label{thm1}
 Assume the same setting as in Theorem~\ref{thm} is adopted and in addition we denote the energy space {$\mathcal{X}_{K}=H^{K}\times H^{K-1}$ and $\mathcal{Y}_{K}=\dot{H}^{K}\times \dot{H}^{K-1}\cap\dot{H}^1\times L^2$ }where $H^m, \dot{H}^m$ are standard Sobolev spaces in $\R^2$ that are defined in \eqref{eq:hsdot}. We also denote
$D=(D_i)_{i=1}^2$ to be the usual differential operator with $D_1=\pa_1,D_2=\pa_2$. Then the following statements hold:
 
 \texttt{(i)}. The solution $\phi$ scatters to a free solution in $\mathcal{X}_K$ as $t\to+\infty$, namely, there exists $(\Phi_{l_0},\Phi_{l_1})\in \mathcal{X}_K$ such that 
   \begin{align}\label{Phiscat}
    \lim_{t\to+ \infty}\|(\phi,\partial_t\phi)-(\phi_l,\partial_t\phi_l)\|_{\mathcal{X}_K}= 0,
\end{align}
where $\phi_l$ is the linear homogeneous solution to the Klein-Gordon equation with initial data $(\Phi_{l_0},\Phi_{l_1})$.

\texttt{(ii)}. The solution $n$ scatters to a free solution weakly in $\mathcal{Y}_K$ as $t\to+ \infty$, i.e. there exists $(N_{l_0},N_{l_1})\in \mathcal{Y}_K$ such that for any $(F_0,F_1)\in \mathcal{Y}_K$ we have
\begin{align}\label{Nscat1}
    &\lim_{t\to + \infty}\int_{\R^2} \na D^{K-1} (n-n_l)\cdot\na D^{K-1} f+\pa_t \na D^{K-1}(n-n_l)\cdot \pa_t\na D^{K-1} f=0,\\ \label{Nscat2}
\mbox{and}\ &\lim_{t\to + \infty}\int_{\R^2}\na(n-n_l)\cdot \na f+\pa_t(n-n_l)\pa_tf=0,
\end{align}
where $n_l$ is the linear homogeneous solution to the wave equation with initial data $(N_{l_0},N_{l_1})$ and $f$ is the linear homogeneous solution to the wave equation with initial data $(F_0,F_1)$.

\end{thm}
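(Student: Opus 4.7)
My plan is to obtain both scattering statements by a Duhamel argument combined with the unitarity of the free propagators on the respective energy spaces. Write $S_{KG}$ for the free Klein-Gordon propagator (an isometry on $\mathcal{X}_K$) and $S_W$ for the free wave propagator (an isometry on $\dot H^K\times\dot H^{K-1}$ and on $\dot H^1\times L^2$).

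For part (i), defining $(\Phi^T_{l_0},\Phi^T_{l_1}):=S_{KG}(-T)(\phi(T),\pa_t\phi(T))$, the Duhamel identity together with the isometry of $S_{KG}$ on $\mathcal{X}_K$ reduces the Cauchy property in $T$ to the Bochner integrability
$$
\int_0^{+\infty}\|B^\al\phi\pa_\al n\|_{H^{K-1}}\,ds < +\infty.
$$
I would bound this integrand through the tame Moser product rule $\|\phi\pa n\|_{H^{K-1}}\ls \|\phi\|_{L^\infty}\|\pa n\|_{H^{K-1}}+\|\phi\|_{H^{K-1}}\|\pa n\|_{L^\infty}$ and substitute the uniform low-order energy \eqref{ue2}, the top-order growth \eqref{ue}, the Klainerman--Sobolev inequality, and the pointwise decay \eqref{op}. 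The resulting naive bounds are $\vep^2\Lg t\Rg^{\de-1}$ and $\vep^2\Lg t\Rg^{-1/2}$, neither of which is integrable in $t$. To close the estimate I would invoke the decomposition $n=n_0+A^\al\pa_\al\wt n$ with $\Box\wt n=|\phi|^2$ from the remarks following Theorem~\ref{thm}: rewriting $\phi\pa n=\phi\pa n_0+A^\al\phi\pa\pa_\al\wt n$, the second piece is treated by the substitution $\pa_t^2\wt n=\De\wt n+|\phi|^2$ (turning the troublesome second derivative into the cubic source $\phi|\phi|^2\ls \vep^3\Lg t\Rg^{-3}$ plus a spatial Laplacian that can be moved by integration by parts inside the Duhamel integral), while the first piece exploits the light-cone concentration of $\pa n_0$ against the interior Klein-Gordon decay of $\phi$ via the ghost-weight inequality.

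For part (ii), fix a test pair $(F_0,F_1)\in \mathcal{Y}_K$ with corresponding free wave $f$, and set $(N_{l_0},N_{l_1})$ to be the weak $\mathcal{Y}_K$-limit of $S_W(-T)(n(T),\pa_t n(T))$. Writing
$$
J(t):=\int_{\R^2}\na D^{K-1}(n-n_l)\cdot\na D^{K-1}f+\pa_t\na D^{K-1}(n-n_l)\cdot\pa_t\na D^{K-1}f\,dx,
$$
the wave equations $\Box(n-n_l)=F:=A^\al\pa_\al(|\phi|^2)$ and $\Box f=0$ yield, after one integration by parts in $x$,
$$
\frac{d}{dt}J(t)=\int_{\R^2}\na D^{K-1}F\cdot\pa_t\na D^{K-1}f\,dx,
$$
with an analogous identity for the lower-order pairing \eqref{Nscat2}. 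I would then use the divergence form of $F$ to integrate by parts in $x_\al$ when $\al\neq 0$ and in $t$ when $\al=0$ (exploiting $\pa_{tt}f=\De f$). This transfers one derivative from $|\phi|^2$ onto $f$, after which the remaining integrand is controlled via $\||\phi|^2\|_{H^{K-1}}\ls \vep^2\Lg t\Rg^{-1}$ paired against the conserved $\mathcal{Y}_K$ norm of $f$, together with boundary terms that decay as $t\to+\infty$.

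The principal difficulty in both parts is closing the time-integrability of the Duhamel integrand: naive Moser and Leibniz estimates produce integrands of size $\vep^2\Lg t\Rg^{-1}$ (or even $\vep^2\Lg t\Rg^{-1/2}$), which are at best logarithmically divergent. The remedy is to exploit the same normal-form and divergence-form structures that drive the bootstrap proof of Theorem~\ref{thm}, namely the splitting $n=n_0+A^\al\pa_\al\wt n$ and the form $F=A^\al\pa_\al(|\phi|^2)$, in order to reallocate an extra derivative and reduce the source to effectively cubic terms with integrable decay. I expect translating this normal-form machinery from the bootstrap setting to the scattering setting to be the main analytic work.
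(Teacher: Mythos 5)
Your overall skeleton agrees with the paper (Duhamel plus unitarity of the Klein--Gordon group for (i); a pairing/duality argument with free waves for (ii)), but in both parts the decisive step -- beating the critical $\Lg t\Rg^{-1}$ decay of the source -- is either missing or would fail as stated. For (i), the fixes you propose do not close: the substitution $\pa_{tt}\td n=\Delta\td n+|\phi|^2$ only removes the pure time-derivative part of $\phi\,\pa\pa_\al\td n$, leaving $\phi\,\Delta\td n$ and the mixed/spatial second derivatives, which under your naive bounds still decay only like $\varepsilon^2\Lg t\Rg^{-1+\de_1}$; and ``moving the Laplacian by integration by parts inside the Duhamel integral'' is not an available operation when strong convergence in $\mathcal{X}_K$ requires the Bochner bound $\int_0^\infty\|\phi\,\pa n\|_{H^{K-1}}\,ds<\infty$ -- there is no spatial pairing to integrate by parts in, and the ghost weight likewise does not yield a bound on $\|\phi\,\pa n_0\|_{H^{K-1}}$. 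What actually makes the integrand integrable in the paper is the weighted output of the bootstrap: $\|\Lg t-|\cdot|\Rg\,\pa\Ga^{\le K-1}n\|_2\ls\varepsilon\Lg t\Rg^{\de_1}$ (Lemma~\ref{lem3.7}) paired against the trade-off bound $\bigl|\tfrac{\phi}{\Lg t-r\Rg}\bigr|\ls\varepsilon\Lg t\Rg^{-2}$ (Lemma~\ref{lem3.7a}), together with $\bigl\|\tfrac{\Lg t+r\Rg}{\Lg t-r\Rg}\Ga^{\le K-1}\phi\bigr\|_2\ls\varepsilon\Lg t\Rg^{\de_1}$ (Lemma~\ref{lem3.5}) against $\|\Lg t-|\cdot|\Rg\,\pa\Ga^{\le[\frac{K-1}{2}]}n\|_\infty$; this yields $\|\phi\,\pa n\|_{H^{K-1}}\ls\varepsilon^2(\Lg t\Rg^{-2+\de_1}+\Lg t\Rg^{-3/2+2\de_1})$, which is integrable. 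Your proposal never invokes this weight-exchange mechanism, which is the whole point of the construction.

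For (ii) the gap is sharper: your concluding estimate, $\||\phi|^2\|_{H^{K-1}}\ls\varepsilon^2\Lg t\Rg^{-1}$ paired against the conserved $\mathcal{Y}_K$ norm of $f$, is exactly the log-divergent quantity \eqref{1.18a}; transferring one derivative from $A^\al\pa_\al(|\phi|^2)$ onto $f$ gains no decay whatsoever, so the time integral of $\tfrac{d}{dt}J$ does not converge by your bound. The paper escapes this by exploiting the dual structure: it first tests against free waves whose data lie in the weighted dense class $\mathcal{W}$, so that $\|\Lg \rho-\tau\Rg\,\pa_t D^{K-1}f\|_2$ is controlled via \eqref{DE-1} and the weighted free-wave bound \eqref{S4-4}, places the dual weight $\Lg\rho-\tau\Rg^{-1}$ on the Klein--Gordon factor through Lemma~\ref{lem3.7a}, and obtains the integrable rate $\Lg\tau\Rg^{-2+2\de_1}$ in \eqref{4.69}; the weak limit is then extended to all $(F_0,F_1)\in\mathcal{Y}_K$ by density and the uniform energy bounds. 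This same estimate is what justifies the existence of $(N_{l_0},N_{l_1})$ (defined through the improper Duhamel integral), which you assumed outright as ``the weak $\mathcal{Y}_K$-limit'' of $S_W(-T)(n(T),\pa_t n(T))$. Without the weight-transfer/duality step your argument neither produces the asymptotic data for $n$ nor proves convergence of the pairing.
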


\begin{rem}\label{rem1.8}
   We want to emphasize that the scattering result in Theorem~\ref{thm1} is valid under high regularity assumptions on the initial data; we refer the readers to other scattering results in \cite{D,OTT95} for the Klein-Gordon-Zakharov system with initial data of much higher regularity. Difficulties arise when appealing to initial data of low regularity data (cf. \cite{GNW14}). Nevertheless, the strong scattering of the wave field $n$ cannot be shown through our argument due to the critical decay-in-time as 
   \begin{align}\label{1.18a}
       \|\pa(|\phi|^2)\|_{H^m}(s)\ls \Lg s\Rg^{-1}\implies \int_1^t\|\pa(|\phi|^2)\|_{H^m}\ ds\sim \log(t).
   \end{align}
  In fact whether the wave
field $n$ scatters strongly (linearly or nonlinearly) is unknown yet and we leave it open. However, a weak scattering can be shown from the extra dual structure under the crucial observation $\left|\frac{\phi}{\Lg r-t\Rg}\right|\ls \Lg t\Rg^{-2}$, see \eqref{3.52} later; we also prove that energy of wave field $n$ is uniformly bounded \eqref{ue2}. Indeed both results are necessary to the strong linear scattering. Moreover the weak linear scattering leads to the uniform boundedness of the usual energy (without any weight) of $n$ applying the classic Uniform Boundedness Principle (see Remark~\ref{rem4.2} for more information).
\end{rem}

We recall some previous results in the literature that are closely related to the presenting paper here.

\subsection{Historical review}\label{Section:HR}
As mentioned earlier the model we are interested is of a nonlinear wave and Klein-Gordon coupling system, therefore it is of great significance to go through some related results. To start with, nonlinear wave equations have been widely studied: consider the following second order quasilinear wave equation in $[0,\infty)\times\R^d$ $(d\ge2)$:
\begin{equation}\label{eq:wend}
    \Box u=g^{kij}\pa_{k} u\pa_{ij}u,
\end{equation}
where $g^{kij}$ are constants and assume the initial data behave sufficiently well. For the $d\ge 4$ case, small data global solutions of \eqref{eq:wend} were known to exist (cf. \cite{H97}). When $d=3$, GWP of \eqref{eq:wend} was obtained in the pioneering work by Klainerman \cite{K86} and Christodoulou \cite{C} under the null condition ($g^{kij} \omega_k \omega_i \omega_j=0,$ for $\omega_0=-1$ and $(\omega_1,\omega_2,\omega_3)\in \mathbb{S}^2$). On the other hand, the solutions admit finite time blow-up behaviors without the null conditions (cf. \cite{J81}). When it comes to the $d=2$ case, the relatively slow decay-in-time causes major difficulties. To overcome such challenge Alinhac in the seminal work \cite{Alinh01_1,Alinh01_2} shows that \eqref{eq:wend} admits a small data global smooth solution under the null conditions by introducing the well-known ``ghost weight'' technique and on the contrary the solution blows up in finite time if the null conditions are not satisfied. In addition in \cite{Alinh01_1} Alinhac showed that the highest norm of the solution grows at most polynomially in time. Recently many work have successfully improved the previous results and we list several of relation here. In \cite{Li21} the author proved the uniform boundedness of the highest norm of the solution by developing a new normal-form type strategy; in \cite{D21} similar results were obtained by applying vector field method on hyperboloids dating back to Klainerman and H\"ormander. Beyond these, there are studies focusing on dealing with models where the Lorentz invariance is not available. Such systems include non-relativistic wave systems with multiple wave speeds (cf. \cite{ ST01}) and %nonlinear wave equations on non-flat space-time (cf. \cite{Y16}),
exterior domains (cf. \cite{M07}). %and incompressible elastodynamics (cf. \cite{L16}). 
It is also worth pointing out that in the work \cite{Li21}, the author developed a new systematic normal-form framework and proved GWP of 2D quasilinear wave equations without the Lorentz boost $\Omega_{i0}$; in fact this strategy inspired us to handle models where the space-time scaling operator $L_0$ is not applicable. 

% It is also worth mentioning \cite{CLLX} where the authors obtained GWP of the 2D quasilinear wave system with non-compactly supported small initial data by developing a novel $(L^2, L^\infty)$ estimates.

The nonlinear Klein-Gordon equations have been roundly studied as well. In the pioneering work by Klainerman \cite{K85} and Shatah \cite{Shata85}, the Klein-Gordon equations with quadratic nonlinearities were shown to possess small global solutions. Motivated by the large amount of inventive work on nonlinear wave equation and Klein-Gordon equation mentioned above, the coupled wave and Klein-Gordon systems have arouse a great deal of interest for decades. Among which, to our best knowledge, one of the very first result belonging to this area was derived by Bachelot \cite{B88} on
the Dirac-Klein-Gordon equations. Later in \cite{G90} Georgiev proved GWP with strong null nonlinearities. Much more physical models described by the coupled wave and Klein-Gordon systems
have been studied since then. We list a few models of interest here: the Klein-Gordon-Zakharov equations\cite{D,DW20,Kata12,OTT95}, the
Maxwell-Klein-Gordon equations \cite{KWY20,P99,P99b}, the Einstein-Klein-Gordon equations \cite{LM16,Wang20}.%, and Dirac-Klein-Gordon model \cite{DLY22}. 

We also review here some results concerning the wave-Klein-Gordon type coupled system such as the Zakharov system arising in plasma physics. We refer the readers to its physical background in \cite{Plasma,Zak72} and briefly review some related models. We recall that the Euler-Maxwell equations are of fundamental importance in the area of plasma physics. In fact one can derive the Zakharov equations via the Euler equation for the electrons and ions coupled with the Maxwell equation for the electric field (cf. \cite{Te07}) and in particular the Euler-Poisson system in $\R^+\times \R^2$ was proved to admit
global solutions by Li-Wu \cite{LW14}. %The work \cite{IP14} studied the GWP of the quasilinear Klein-Gordon systems with multiple speeds and masses.
There are many other interesting results concerning different aspects of other closely related models of the wave-Klein-Gordon system. We hereby list a few of them: GWP of the Klein-Gordon-Zakharov equations with multiple propagation speeds \cite{OTT99}, the convergence of the Klein-Gordon-Zakharov equations to the Schr\"odinger
equation (also the Zakharov system) as certain parameters go to $+\infty$ (cf. \cite{MN08}), the finite time blow-up for Klein-Gordon-Zakharov with rough data \cite{SW19}, the long-term pointwise decay of the small global solution (cf. \cite{D_KG21,DM10,GY95}), scattering (cf. \cite{GNW14, GNW14b, HPS13}). Other related developments with different strategies can be found in the papers  \cite{DM21,DW20,M222,WY14,Y22,Z22}.
 %([26, 34] from Dong-kgz).}

The presenting paper was mainly motivated by several papers \cite{CX22,D,DM21,DM10}. In the work \cite{D}, the authors studied the Klein-Gordon-Zakharov equations on $\R^+\times \R^3$ where they showed the energy of the global solutions pair is uniformly bounded with initial data therein having rather high regularity (around 15 times of differentiability in the Sobolev sense). Their proof relies on certain $L^\infty-L^\infty$ estimates of both wave and Klein-Gordon components. The paper \cite{CX22} studied the 3D Klein-Gordon-Zakharov as well where the regularity assumption of the initial data was reduced to around 8 times of differentiability with the usual energy bootstrap method (armed with the newly-developed normal form strategy introduced in \cite{Li21}). When it comes to the 2D case, the work \cite{DM21} studied the Klein-Gordon-Zakharov equations on $\R^+\times \R^2$ and showed such system admits global small solutions with non-compactly supported initial data. However, they assumed the wave field $n$ is of the form $\Delta n^\Delta$ for some function $n^\Delta$; this in return required the initial data $N_0$ to be in $H^{-2}(\R^2)$. Actually such transformation makes partial sense due to a key energy conservation law; we refer the readers to \cite{GNW14} and the references therein, where the energy was defined as
\begin{align}\label{1.19}
    E=\int |\phi|^2+|\pa \phi|^2+\frac{|D^{-1} \pa_t n|^2+|n|^2}{2}-n|\phi|^2\ dx.
\end{align}
Last but not least, the work \cite{DM10} studied the exact wave-Klein-Gordon system as in \eqref{eq:kg}. The authors obtained global small solutions with compactly supported solutions where a vector field on hyperboloids strategy dating back to Klainerman and H\"ormander was adopted. Motivated by the existing results on coupled wave and Klein-Gordon systems, one of our modest
goals is to show the global existence result without any compactness assumptions on the initial data through a new energy bootstrap based on the normal-form strategy. In addition we want to give a clear description of the pointwise asymptotic behavior together with showing the linear scattering of these global solutions.

\subsection{Main Difficulties}\label{sec:mainD}
Before we sketch the strategy of our proof, we briefly demonstrate the main difficulties of this model problem here. To begin with, as we mentioned earlier in Remark~\ref{rem1.1}, the best pointwise decay of $(\phi,n)$ one can expect (ignoring the constant) is \eqref{1.8} (instead of \eqref{op}):
\begin{align}\label{1.10}
    |\phi(t,x)|\lesssim \Lg t\Rg^{-1},\quad |\pa n(t,x)|\lesssim \Lg t\Rg^{-\frac12} \Lg t-|x|\Rg^{-\frac12}.
\end{align}
Recall that the Klein-Gordon component
\begin{align}\label{1.11}
    \Box \phi+\phi =B^\al\phi \pa_\al n.
\end{align}
The usual energy estimate (by taking $L^2$ inner product  with $\partial_t \phi$ on \eqref{1.11}) yields 
\begin{align}
    \frac{d}{dt}(\| \partial \phi\|_2^2+\|\phi\|_2^2)=\int_{\R^2}B^\al \phi \pa_\al n \cdot\partial_t\phi \ dx,
\end{align}
where $\partial=(\partial_t,\nabla)$ (see Section 2 for the notation). It is then very natural to integrate in time and estimate the following quantity:
\begin{align}
    \int_0^t \|\phi \pa n\|_2(s) \ ds\lesssim\int_0^t \|\phi\|_{\infty} \|\pa n\|_2 \ ds +\int_0^t \|\pa n\|_{\infty} \|\phi\|_2 \ ds.
\end{align}
Taking \eqref{1.10} into consideration and naively assuming the $L^2$ norms are uniformly bounded in time we arrive at:
\begin{align}
    \int_0^t \|\phi \pa n\|_2 \ ds\lesssim \int_0^t (\Lg s\Rg^{-1}+\Lg s\Rg^{-\frac12})\ ds \lesssim \log(1+t)+\Lg t\Rg^{\frac12}, 
\end{align}
which even implies a blow-up of the energy as $t$ goes to $+\infty$! We believe there are two reasons causing such terrible blow-up and holding us back. 
\begin{itemize}
    \item First we observe that the decay $\Lg t-r\Rg^{-\frac12}$ has not been made fully use of while taking supremum in space. To our understanding, one of the main advantages of the aforementioned hyperbolic change of variable (vector field method on hyperboloids) is that one can gain better control of the so-called ``conformal energy'' thanks to the extra integrability in the hyperbolic time $s=\sqrt{t^2-r^2}$ (cf. \cite{D_KG21}). 
    \item Second is that we did not pay proper attention to the crucial machinery of the Klein-Gordon field $\phi$ that extra decay in $\Lg t+r\Rg$ can be gained with a trade-off losing $\Lg t-r\Rg$. For a more detailed discussion, we refer the readers to Remark~\ref{rem2.12} or Lemma~\ref{lem3.5} and Lemma~\ref{lem3.7a} alternatively.
\end{itemize}
To handle the above challenges, we follow the similar idea as in \cite{CX22}. Briefly speaking, we are about to apply the well-adapted Alinhac's ghost weight method to absorb extra $\Lg t-r\Rg$ weight and thereby generate better decay-in-time. Afterwards we will take good care of the nonlinear structure together with the key machinery of the Klein-Gordon field $\phi$ mentioned earlier to derive even better decay-in-time. Another main difficulty is that the space-time scaling is not invariant in the KGZ system therefore the $L_0$ vector field is not applicable. In the presenting paper, we develop a new normal-form strategy that is motivated by the work \cite{Li21} where the Lorentz transformation is not invariant. Lastly, another question is whether one can prove the scattering (linearly or nonlinearly) of the wave field $n$. Unlike the Klein-Gordon field $\phi$ is known to scatter linearly, scattering of the wave field $n$ is still an open question in $\R^+\times \R^2$ in the Klein-Gordon-Zakharov context. Such problem results from the critical decay-in-time due to the low dimension as explained earlier in Remark~\ref{rem1.8}. In other words, when approaching to show the linear scattering of $n$ in the energy space we usually argue by semi-group techniques. As a result, it seems inevitable to estimate terms such as $\|\pa(|\phi|^2)\|_{H^m}$, which decays as $\Lg t\Rg^{-1}$ at best. Therefore the space-time integral is not uniformly bounded \eqref{1.18a}:
$$ \int_1^t \|\pa(|\phi|^2)\|_{H^m}\ ds\sim \log(t).$$
Although it seems unlikely to show the wave field $n$ scatters linearly, we manage to prove a weak scattering result from the extra dual structure under the crucial machinery mentioned earlier $\left|\frac{\phi}{\Lg r-t\Rg}\right|\ls \Lg t\Rg^{-2}$, see Lemma~\ref{lem3.7a} later. In addition we also prove that energy of wave field $n$ is uniformly bounded \eqref{ue2}. It is worth emphasizing that both the uniform boundedness of the energy and the weak linear scattering are necessary to the strong linear scattering.

\subsection{Main steps of the proof.} \label{sec:ol}
We hereby outline the key steps of our proof. To illustrate the idea clearly, we fix any multi-index $\alpha$ and denote $\Phi = \Ga^\al \phi$ and $V=\pa\Ga^\al n$ with $|\al|=K$ (we suppress the dependence on $\alpha$ to ease the notation). We point out here $\Ga$ cannot be $L_0$ due to the lack of space-time scaling invariance; for the exact definition we refer to \eqref{def_Gamma0}--\eqref{def_Gamma}. 

\texttt{Step 1.} {Energy estimate.} To start with, we make the following $a\ priori$ hypothesis:
\begin{align}
% &\|\partial \Gamma^{\le K}\phi\|_{L^2(\R^3)}^2+\| \Gamma^{\le K}\phi\|_{L^2(\R^3)}^2+\int_{t_0}^t\int_{\R^3}\frac{|\Gamma^{\le K}\phi|^2}{\langle r-t\rangle^{1+\delta}} dxds\le (C\varepsilon)^2,\label{A1}\\
% &\| \Gamma^{\le K}n\|_{L^2(\R^3)}^2 \le (C\varepsilon)^2,\label{A11}\\
|\Gamma^{\le k} n|\le C\varepsilon\langle  t\rangle^{-\frac12}\langle r-t\rangle^{-\frac{1+\de}{2}},\quad |\Gamma^{\le k} \phi|\le C\varepsilon \langle t+r\rangle^{-1},\label{Ab}
\end{align}
where positive constant $C$ is chosen later and $k$ is around half size of $K$. We first apply Alinhac's ghost weight method by choosing $p(r,t)=q(r-t)$ with $q'(s)$ scaling as $\Lg s\Rg^{-1-\de}$. In light of such choice of $q$, we remark here that it is already the best as far as we know; in fact it seems totally non-trivial to find a ``ghost weight'' with $q(s)$ scaling exactly as $\Lg s\Rg^{-1}$. As a matter of fact, the $\Lg s\Rg^{-1-\de}$ scale brings technical difficulties to both the energy estimate and the pointwise asymptotic analysis. To see this, we briefly discuss about performing energy estimats below. Let us focus on the Klein-Gordon equation after applying $\Ga^\alpha$:
\begin{align}\label{1.16}
    \Box \Phi+\Phi=\Ga^\al(B^\be\phi \pa_{\be}n).
\end{align}
Then by taking the $L^2$ inner product on \eqref{1.16} with $\partial_t \Phi\cdot \exp(p)(r,t)$ we can obtain that
{\small\begin{align}
\frac12\frac{d}{dt}(\| e^\frac{p}{2}\partial \Phi\|_{2}^2+\| e^\frac{p}{2} \Phi\|_{2}^2) +\frac 12
   \int e^p q^{\prime} (|T \Phi|^2+|\Phi|^2) =\sum_{I_1+I_2=I}C^\al_{I; I_1,I_2} \int e^p\pa_\al\Gamma^{I_1}n \Gamma^{I_2}\phi \partial_t \Phi
\end{align}}
for some constant $C^\al_{I; I_1,I_2}$. For the sake of simplicity, we focus on the terms with the highest index, i.e. $\int e^p (\Phi \pa n +V\phi)\partial_t \Phi\ dx$. At first glance, clearly we can control
\begin{align}\label{1.18}
    \left|\int V\phi e^p\partial_t\Phi\ dx\right|\lesssim\|V\|_2\|\phi\|_\infty\|\partial_t\Phi\|_2\lesssim\Lg t\Rg^{-1}\| V\|_2\|\partial_t\Phi\|_2.
\end{align}
The other term is handled by applying the usual Cauchy-Schwartz inequality as below:
\begin{equation}
\left|\int \Phi \pa n e^p\partial_t\Phi\ dx\right|\le \frac14  \int e^p q'\cdot |\Phi|^2\ dx+ \mbox{constant} \cdot \int |\pa \Phi|^2 \cdot\frac{|n|^2 }{q'} e^p\ dx,
\end{equation}
which leads to the energy estimate of the Klein-Gordon field {(here  $\frac{|n|^2 }{q'}\sim t^{-1}$ by \eqref{Ab} )}. Similar energy estimates on the wave component yields the same result and both lead to a polynomially growth of the energy, namely, $\| \pa \Phi\|_2+\|\Phi\|_2\ls \Lg t\Rg^{\de_1}$ for arbitrarily small $\de_1>0$. In order to close the bootstrap scheme by showing the pointwise asymptotic decay-in-time, it is necessary to prove the lower-order energy is uniformly bounded for both wave field $n$ and Klein-Gordon field $\phi$. To proceed, we notice from the previous work \cite{CX22} that the wave field $\pa n$ indeed is able to absorb extra $\Lg r-t\Rg$ while in return the crucial machinery of the Klein-Gordon $\frac{\phi}{\Lg r-t\Rg}$ can gain better decay $\Lg t\Rg^{-2}$. More specifically, our approach is to decompose $n$ into two parts: $n = n_0 + A^\al \pa_\al \td{n}$, where 
\begin{align}
\begin{cases}
\Box n_0=0,\\
(n_0,\partial_t n_0)|_{t=0} =(N_0,N_1),
\end{cases}
\qquad \text{and}\qquad\
\begin{cases}
\Box \td{n}=|\phi|^2,\\
(\td{n},\pa_t\td{n}) |_{t=0}=(0,0).
\end{cases}
\end{align}
This kind of reformulation is dated back to \cite{Kata12}, where the author considered nonlinear wave equations with
nonlinearities of divergence form. One way to understand this decomposition is to treat $\td{n}$ as a small perturbation of order $O(\varepsilon^2)$ (the ``leading'' term $n_0$ is of order $O(\varepsilon)$). We refer the readers to Remark~\ref{rem3.3}. For the details of the proof of the uniform energy boundedness, we refer the readers to Section~\ref{sec:energy}.

\texttt{Step 2.} {Pointwise decay-in-time and scattering.} We then close the energy bootstrap by showing the asymptotic pointwise decay \eqref{op}. The decay of the Klein-Gordon component $\phi$ follows from a result by Georgiev in \cite{G1992} (cf. Lemma~\ref{kgt32}). In short words, it suffices for us to bound
\begin{align}
    \|\Lg t+|\cdot|\Rg\Ga^{\le k+4}(\phi \pa n)\|_2.
\end{align}
We notice that Lemma~\ref{lem2.7} plays an important role in dealing with terms such as
\begin{align}
    \|\Lg t+|\cdot|\Rg(\Ga^{\le k+4}\phi)\pa n \|_2+ \|\Lg t+|\cdot|\Rg(\Ga^{\le k+4}\pa n)\phi\|_2.
\end{align}
As explained earlier we indeed view Lemma~\ref{lem2.7} as an iteration process: the Klein-Gordon component $\phi$ can gain better $\Lg t+r\Rg$ decay as long as it can absorb more $\Lg t-r\Rg$ weight. For the wave component $n=n_0+A^\al \pa_\al \td{n}$. $n_0$ is a free wave solution therefore we handle it by developing a $L^\infty-L^1$ estimate (see Lemma~\ref{lemhom} and Lemma~\ref{lemB2}). On the other hand, to estimate the $\td{n}$ part and remedy the lack of $L_0$ vector fields, we employ $L^{\infty}$ and $L^2$ estimates involving the weight-factor $\langle
r-t\rangle$. The key observation (see Lemma~\ref{lem2.6}) is that one can control $\Lg t-r\Rg\partial^2u$. At the expense of the smallness of the energy, we obtain the desired asymptotic decay-in-time. These in turn lead to the closure of the energy bootstrap with a careful choice of the constants. In the end, we show the Klein-Gordon field $\phi$ scatters linearly as $t\to+\infty$. In fact this results from a semi-group method and the uniform boundedness of the energy in time. Although it seems unlikely to show the wave field $n$ scatters linearly by our method, we still manage to prove a weak scattering result from the extra dual structure (see Theorem~\ref{thm1}). In addition we also prove that energy of wave field $n$ is uniformly bounded \eqref{ue2}. We highlight here that both the uniform boundedness of the energy and the weak linear scattering are necessary to the strong linear scattering of the wave field $n$.

\subsection{Organization of the present paper}The rest of this work is organized as follows. In Section 2 we collect the notation and some
preliminaries including a few useful lemmas. In Section~\ref{sec:energy} we show the uniform boundedness of the  energy. The pointwise decay in time and scattering are included in Section~\ref{sec:close}, therefore the proof of Theorem~\ref{thm} and Theorem~\ref{thm1} is complete (for the \eqref{eq:kg1} case). In Section~\ref{sec:bcase}, we briefly discuss about \eqref{eq:kg2} case. We leave some extra analysis related to the initial data in the Appendix.

\section{Preliminaries and notation}

\subsection{Notation}\label{sec:nota}
We shall use the Japanese bracket notation: $ \langle x \rangle = \sqrt{1+|x|^2}$, for $x \in \mathbb R^2$.  We denote  $\partial_0 = \partial_t$,
$\partial_i = \partial_{x_i}$, $i=1,2$ and  
\begin{align}
& \partial = (\partial_i)_{i=0}^3, \; \Omega_{ij}=x_i\partial_j-x_j\partial_i, 1\le i<j\le 2;\; \Omega_{i0}=t\pa_{i}+x_{i}\pa_{t}, 1\le i\le 2;
\; \Omega = r \partial_t + t \partial_r;\notag\\
& \Gamma= (\Gamma_i )_{i=1}^{6}, \quad\text{where } \Gamma_1 =\partial_t, \Gamma_2=\partial_1,
\Gamma_3= \partial_2, \Gamma_4=\Omega_{10},\Gamma_5=\Omega_{20},
\label{def_Gamma0} \\& \Gamma_6=\Omega_{12}=\partial_\theta,
\label{def_Gamma}\\
& \Gamma^{\alpha} =\Gamma_1^{\alpha_1} \Gamma_2^{\alpha_2}
\cdots\Gamma_{6}^{\alpha_{6}}, 
\qquad \text{$\alpha=(\alpha_1,\cdots, \alpha_{6})$ is a multi-index};  \notag\\
%& \partial_+ =\partial_t + \partial_r, \qquad \partial_- =\partial_t - \partial_r;  \notag\\
&\widetilde\Gamma=(\widetilde\Gamma_i )_{i=1}^{7}=(L_0,\Gamma),\qquad L_0=t\partial_t+r\partial_r,\qquad r=|x|; \label{def-tdGamma}\\
& \widetilde\Gamma^{\beta} =\Gamma_1^{\beta_1} \Gamma_2^{\beta_2}
\cdots\Gamma_{7}^{\beta_{7}}, 
\qquad \text{$\beta=(\beta_1,\cdots, \beta_{7})$ is a multi-index};  \notag\\
%&\widetilde\Gamma\in\{\Gamma,L_0\}=\{\partial_{t},\partial_{1},\partial_2,\partial_3,\Omega_{10},\Omega_{20},\Omega_{30},\Omega_{12},\Omega_{13},\Omega_{23}\};
& T_i = \omega_i \partial_t + \partial_i, \; \omega_0=-1, \; \omega_i=x_i/r, \, i=1,2. \label{DefT}
\end{align}
Note that $T_0=0$.
For simplicity of notation,
we define for any integer $k\ge 1$,  $\Gamma^k = (\Gamma^{\alpha})_{|\alpha|=k}$,
$\Gamma^{\le k} =(\Gamma^{\alpha})_{|\alpha|\le k}$.
In particular
\begin{align}
|\Gamma^{\le k} u | = \left(\sum_{|\alpha|\le k} |\Gamma^{\alpha} u |^2\right)^{\frac 12}.
\end{align}
Informally speaking, it is useful to think of  $\Gamma^{\le k} $ as any one of the vector
fields $ \Gamma^{\alpha}$ with $|\alpha| \le k$. We also denote
$D=(D_i)_{i=1}^2$ to be the usual differential operator with $D_1=\pa_1,D_2=\pa_2$. We shall need the following convention for multi-indices: for $\beta=(\beta_1,\cdots,\beta_{6})$ and $\alpha=(\alpha_1,\cdots,\alpha_{6})$, we denote $\beta<\alpha$ if $\beta_i\le\alpha_i $ for $i=1,\cdots,6$ and $|\beta|<|\alpha|$ (Here $|\alpha|=\sum_{i=1}^{6}\alpha_i$). Similarly we denote $\beta\le \alpha$ if $\beta_i\le\alpha_i $ for $i=1,\cdots,6$. 
The commutator of the vector fields is given below and the proof is standard (cf. \cite{H97}).
\begin{lem}[Commutator]
For any given multi-index $\al=(\al_{1},\cdots,\al_{\iota})$, we have 
\begin{align}
    &[\pa_{i}, \Gamma^{\al}]
    =\sum_{|\be|\leq |\al|-1}a_{\al\be}\Gamma^{\be} \pa
    =\sum_{|\be|\leq |\al|-1}\tilde{a}_{\al\be}\pa\Gamma^{\be},\\
   &[\Box, \Ga^\al]= [\Box+1,\Ga^\al]=0,
\end{align}
where $\Gamma^{\al}=(\Gamma^{\al_{1}},\cdots,\Gamma^{\al_{\iota}})$,  $a_{\al\be}$, $\td{a}_{\al\be}$ are constants and $[A,B]=AB-BA$ is the usual commutator.
\end{lem}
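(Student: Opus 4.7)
The plan is to argue by induction on $|\alpha|$, after first recording the base-case commutators between the generators $\Gamma_j$ and the quantities $\partial_i$ and $\Box$.

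\textbf{Step 1: Base cases for $[\partial_i,\Gamma_j]$.} First I would compute the seven elementary commutators. For the translation generators $\Gamma_1=\partial_t,\Gamma_2=\partial_1,\Gamma_3=\partial_2$ one has $[\partial_i,\Gamma_j]=0$, since partial derivatives commute. For the Lorentz boosts $\Omega_{k0}=t\partial_k+x_k\partial_t$ ($k=1,2$) a direct application of the Leibniz rule gives
\begin{align*}
[\partial_t,\Omega_{k0}] &= \partial_k,\qquad [\partial_j,\Omega_{k0}] = \delta_{jk}\partial_t,
\end{align*}
and similarly for the rotation $\Omega_{12}=x_1\partial_2-x_2\partial_1$ one has $[\partial_t,\Omega_{12}]=0$, $[\partial_1,\Omega_{12}]=\partial_2$, $[\partial_2,\Omega_{12}]=-\partial_1$. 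In every case $[\partial_i,\Gamma_j]$ is a constant-coefficient linear combination of elements of $\partial=(\partial_0,\partial_1,\partial_2)$, which is the $|\alpha|=1$ instance of both asserted identities.

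\textbf{Step 2: Induction for $[\partial_i,\Gamma^\alpha]$.} Assume both representations
\[
[\partial_i,\Gamma^\gamma]=\sum_{|\beta|\le|\gamma|-1}a_{\gamma\beta}\Gamma^\beta\partial=\sum_{|\beta|\le|\gamma|-1}\tilde a_{\gamma\beta}\partial\Gamma^\beta
\]
hold for all multi-indices of length $\le k$. For a multi-index of length $k+1$ write $\Gamma^\alpha=\Gamma_j\Gamma^{\alpha'}$ with $|\alpha'|=k$, and use the identity $[\partial_i,AB]=[\partial_i,A]B+A[\partial_i,B]$. The first term is $[\partial_i,\Gamma_j]\Gamma^{\alpha'}$, which by Step 1 is of the form $\partial\,\Gamma^{\alpha'}$, already in the desired shape; the second term $\Gamma_j[\partial_i,\Gamma^{\alpha'}]$ is reduced by the induction hypothesis and then one pulls $\Gamma_j$ through by using the induction hypothesis once more (to convert $\Gamma^\beta\partial$ back to $\partial\Gamma^\beta$ and vice versa). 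This gives both representations at level $k+1$ with new constant coefficients, completing the induction.

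\textbf{Step 3: Commutation with $\Box$.} For the generators, I would verify $[\Box,\Gamma_j]=0$ on each $\Gamma_j$ by direct computation: translations commute with $\Box$ because $\Box$ has constant coefficients; for $\Omega_{k0}$ one checks
\[
\Box(t\partial_k u+x_k\partial_t u)=t\partial_k\Box u+x_k\partial_t\Box u
\]
using $\partial_t(t\partial_k)=\partial_k+t\partial_t\partial_k$ and $\partial_j(x_k\partial_t)=\delta_{jk}\partial_t+x_k\partial_j\partial_t$ and noting that all the extra first-order terms from the $\partial_{tt}$ contribution cancel those from $\Delta=\partial_1^2+\partial_2^2$. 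The verification for the rotation $\Omega_{12}$ is the standard statement that $\Box$ is $SO(2)$-invariant. Hence $[\Box,\Gamma_j]=0$ for $j=1,\dots,6$. For a general multi-index $\alpha$ I would then induct: writing $\Gamma^\alpha=\Gamma_j\Gamma^{\alpha'}$ and using $[\Box,AB]=[\Box,A]B+A[\Box,B]$, both pieces vanish by the base case and the induction hypothesis. Finally $[\Box+1,\Gamma^\alpha]=[\Box,\Gamma^\alpha]+[1,\Gamma^\alpha]=0$ because any operator commutes with the identity.

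\textbf{Main obstacle.} There is no conceptual obstacle; everything is a routine commutator calculation using Leibniz and constant-coefficient identities. The only mild bookkeeping nuisance is Step 2, where one must apply the inductive hypothesis twice in order to move $\Gamma_j$ across $\partial$ and re-express the result either in the form $\sum a_{\alpha\beta}\Gamma^\beta\partial$ or $\sum\tilde a_{\alpha\beta}\partial\Gamma^\beta$ as required, but this is handled uniformly by the two-sided inductive hypothesis.
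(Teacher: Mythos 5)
Your proof is correct and is precisely the standard commutator computation that the paper simply cites (it refers the reader to \cite{H97} without giving details). The base-case commutators and the two-sided induction are all accurate, and restricting to $\Gamma_1,\dots,\Gamma_6$ (omitting $L_0$, which does \emph{not} commute with $\Box$) is exactly the relevant point for this lemma.
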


For a real-valued function $u:\R^2\to \R$ we denote its usual Lebesgue $L^p$-norm by
\begin{align}
    \|u\|_{p}=\|u\|_{L^p(\R^2)}=\begin{cases}
       & \left(\int_{\R^2} |u|^p\ dx\right)^{\frac{1}{p}},\quad  1\le p<\infty;\\
       & \operatorname{esssup}_{x\in\R^2}|u(x)|,\quad p=\infty.
    \end{cases}
\end{align}
We use the following convention for the Fourier transform pair:
\begin{align}
    \hat{f}(\xi)=\int_{\R^2}f(x)e^{-2\pi i \xi\cdot x}\ dx,\qquad f(x)=\int_{\R^2}\hat{f}(\xi) e^{2\pi i \xi\cdot x}\ d\xi.
\end{align}
We thereby denote the usual Sobolev norm for $0\le s<\infty $ as follows:
\begin{subequations}\label{eq:hsdot}   
\begin{equation}
    \|u\|_{\dot{H}^s}=\|u\|_{\dot{H}^s(\R^2)}=\||\na|^s u\|_2=\|(2\pi |\xi|)^s\hat{f}(\xi)\|_2,
\end{equation}
\begin{equation}
    \|u\|_{{H}^s}=\sqrt{\|u\|_2+\|u\|_{\dot{H}^s}}=\|\Lg 2\pi |\xi|\Rg^s\hat{f}(\xi)\|_2.
\end{equation}
\end{subequations}
For integer $J\ge 3$, we shall denote 
\begin{align}
E_J = E_J(u(t,\cdot)) = \| (\partial \Gamma^{\le J} u)(t,\cdot) \|_{L_x^2(\mathbb R^2)}^2.
\end{align}

For any two quantities $A$, $B\ge 0$, we write  $A\lesssim B$ if $A\le CB$ for some unimportant constant $C>0$ and such $C$ may vary from line to line if not specified.
%We write $A\lesssim_{Z_1,\cdots,Z_k} B$ if $A\le CB$ where $C>0$ depends on
%the parameters ($Z_1,\cdots, Z_k$).
We write $A\sim B$ if $A\lesssim B$ and $B\lesssim A$. We write $A\ll B$ if
$A\le c B$ and $c>0$ is a sufficiently small constant. The needed smallness is clear from the context.

{Throughout this work we assume $t\ge 2$ if it is not specified.}

It is convenient to adopt the following notation for a smooth cut-off function $\chi(s)$ in this work:
\begin{align}\label{chi}
    \chi(s)\coloneqq \begin{cases}
        &0, \quad s\le 1,\\
        &1, \quad s\ge 2.
    \end{cases}
\end{align}

\subsection{Decay estimates}
We collect some decay estimates in the following lemmas.

\begin{lem}[Klainerman--Sobolev]\label{KSineq}
Let $h(t,x)\in C^\infty([0,\infty)\times \mathbb{R}^2)$ and $ h(t,x)\in \mathcal{S}(\R^2)$  for every $t>0$. Then
\begin{align}\label{ksineqa}
\Lg t+|x|\Rg^{\frac12}\Lg t-|x|\Rg^{\frac12}|h(t,x)|\lesssim  \Vert \widetilde\Gamma^{\leq 2} h(t,\cdot) \Vert_{2}\qquad \forall t>0, \ x\in\mathbb{R}^2.
\end{align}

\end{lem}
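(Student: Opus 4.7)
The inequality is the standard 2D Klainerman--Sobolev estimate, and my plan is to apply the 2D Sobolev embedding $H^2\hookrightarrow L^\infty$ on a rescaled box around each point $(t,x_0)$, where the rescaling is chosen so that ordinary partial derivatives on the box correspond to applications of $\widetilde\Gamma$ modulo coefficients providing the desired decay, and the Jacobian of the rescaling generates the weight $\langle t+|x|\rangle^{-1}\langle t-|x|\rangle^{-1}$. Two identities will play a central role: first, since $\Omega_{i0}=t\pa_i+x_i\pa_t$ and $L_0=t\pa_t+x_i\pa_i$, one gets $(t\delta_{ij}+x_ix_j/t)\pa_j=\Omega_{i0}-(x_i/t)L_0$, and second, setting $\Omega = r\pa_t+t\pa_r=(x_i/r)\Omega_{i0}$ (in the span of $\widetilde\Gamma$), one obtains
\begin{align*}
\pa_r=\frac{rL_0-t\Omega}{r^2-t^2}, \qquad \xi\pa_\xi=\tfrac12(L_0-\Omega) \text{ with } \xi=t-r.
\end{align*}

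I would split into three regions, writing $r_0=|x_0|$ and WLOG $t\ge 1$. \emph{(i) Interior} $r_0\le t/2$: the coefficient matrix $t\delta_{ij}+x_ix_j/t\approx tI$ is invertible with inverse of operator norm $\lesssim t^{-1}$, giving $|\pa h|\lesssim t^{-1}|\widetilde\Gamma h|$ pointwise on $B_{t/4}(x_0)$; Sobolev on the rescaling $y=(x-x_0)/t$ of the unit ball yields $|h|\lesssim t^{-1}\|\widetilde\Gamma^{\le 2}h\|_2\sim\langle t+r_0\rangle^{-1/2}\langle t-r_0\rangle^{-1/2}\|\widetilde\Gamma^{\le 2}h\|_2$. \emph{(ii) Exterior} $r_0\ge 2t$: on $B_{r_0/4}(x_0)\subset\{r\ge r_0/2\}$ we have $|r^2-t^2|\gtrsim r_0^2$, so the displayed identity gives $|\pa_r h|\lesssim r_0^{-1}|\widetilde\Gamma h|$; combined with the polar decomposition expressing $\pa_i$ as a bounded linear combination of $\pa_r$ and $r^{-1}\Omega_{12}$, we obtain $|\pa h|\lesssim r_0^{-1}|\widetilde\Gamma h|$, and Sobolev with rescaling $y=(x-x_0)/r_0$ yields $|h|\lesssim r_0^{-1}\|\widetilde\Gamma^{\le 2}h\|_2$. \emph{(iii) Near the light cone} $t/2<r_0<2t$: setting $d=\langle t-r_0\rangle$, if $d>1$ I would use polar coordinates and an $(r,\theta)$-rectangle of dimensions $d\times 1$ centered at $(r_0,\theta_0)$; the identity $\xi\pa_\xi=\tfrac12(L_0-\Omega)$ together with $|\xi|\sim d$ on the rectangle gives $d|\pa_r h|\lesssim|\widetilde\Gamma h|$ with bounded coefficients, while $\pa_\theta=\Omega_{12}\in\widetilde\Gamma$ handles the angular direction. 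The Jacobian of the rescaling to the unit square produces $(r_0 d)^{-1}\sim\langle t+r_0\rangle^{-1}\langle t-r_0\rangle^{-1}$, and Sobolev finishes the bound. When $d\le 1$, a unit-scale $(r,\theta)$-box and the identity $\pa_r=(\Omega-r\pa_t)/t$ (with $r/t\sim 1$ bounded) suffice, yielding the weaker $\langle t\rangle^{-1/2}$ decay, which matches the target weight since $\langle t-r_0\rangle\sim 1$ in this sub-case.

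The main obstacle will be the careful bookkeeping of second-order derivatives $\pa^2 h$ demanded by $H^2\hookrightarrow L^\infty$. These are obtained by iterating the first-order identities and controlling commutators such as $[\pa_r,L_0]$, $[\pa_r,\Omega]$, and $[\pa_\theta,L_0]$; each commutator produces a lower-order term with bounded coefficient on the rescaled box, but one must verify that the resulting expressions lie within $\widetilde\Gamma^{\le 2}$ with the correct weight in each region. A secondary concern is the mild singularity of the polar-coordinate formulas near $r=0$ (absorbed into the interior case when $r_0\ll 1$) and the transition at the boundaries $r_0=t/2, r_0=2t, d=1$, each handled by a routine bump-function partition of the three regions. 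Once these identities and commutators are organized and matched to the three rescalings, the final pointwise estimate is immediate from standard 2D Sobolev embedding on the fixed unit square.
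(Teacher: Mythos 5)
The paper does not supply its own proof of this lemma; it simply declares the result standard and refers to H\"ormander's book. Your argument reconstructs the classical Klainerman--Sobolev proof by regional rescaling plus $H^2\hookrightarrow L^\infty$, which is exactly the kind of argument the paper is pointing at, and it is essentially correct. Two small algebraic slips are worth fixing, though neither invalidates the argument. First, the displayed identity has a sign error: since $\Omega_{i0}=t\pa_i+x_i\pa_t$ and $L_0=t\pa_t+x_j\pa_j$, one gets
\begin{align*}
\Omega_{i0}-\frac{x_i}{t}L_0=t\pa_i-\frac{x_ix_j}{t}\pa_j=\Bigl(t\delta_{ij}-\frac{x_ix_j}{t}\Bigr)\pa_j,
\end{align*}
with a minus sign, not a plus. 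The matrix $t\delta_{ij}-x_ix_j/t$ has eigenvalues $t$ and $(t^2-r^2)/t$, so it is still uniformly $\approx tI$ in your interior region $r\le 3t/4$, and your conclusion there is unaffected; but the plus-sign version would misleadingly suggest invertibility with norm $\lesssim t^{-1}$ everywhere, including near the light cone, where the true matrix degenerates. Second, in region (iii) the identity $\xi\pa_\xi=\tfrac12(L_0-\Omega)$ refers to $\pa_\xi$ at fixed $\eta=t+r$, which is $\tfrac12(\pa_t-\pa_r)$ and not the spatial $\pa_r$ on a time slice; the clean statement you actually need is the other one you already wrote, $(r^2-t^2)\pa_r=rL_0-t\Omega$, which gives directly $(r-t)\pa_r=(rL_0-t\Omega)/(r+t)$ with coefficients bounded near the cone, hence $\langle t-r\rangle|\pa_r h|\lesssim|\widetilde\Gamma h|$. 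With those two corrections, and the bookkeeping of second derivatives via commutators that you correctly flag as routine, the proof is sound and matches the standard reference the paper cites.
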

\begin{proof}
    The proof is standard, we refer the readers to \cite{H97} for example. %One can find it in Lemma 2.3 of \cite{CLLX} for example. 
    We also emphasize that \eqref{ksineqa} involves $L_0$ that may lead to trouble when applied to the wKG system, so that one must be carefully examining $L_0$ terms. 
\end{proof}

\begin{lem}
For $x\in \R^2$ with $|x|\le\frac t2$, $t\ge 1$, we have for $u\in \mathcal{S}(\R^2)$ 
\begin{align}\label{t12}
t^{\frac 12}|u(t,x)|\lesssim \|\partial^{\le 1}{\Gamma^{\le 2}} u\|_{L_x^2(\R^2)}
\end{align}
where $\pa=\pa_t,\pa_1,\pa_2$. Moreover, for the case $|x|\ge \frac{t}{2}$ with $t\ge1$, we have
\begin{align}\label{t12b}
|x|^{\frac12}|u(t,x)|\lesssim \|\na^{\le 1}\pa_{\theta}^{\le 1} u\|_{L_x^2(\R^2)}.
\end{align}

\end{lem}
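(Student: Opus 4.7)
The proof naturally splits into the two regimes and requires different techniques for each.

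For the exterior estimate (ii), I observe that the bound $|x|^{1/2}|u(t,x)|\lesssim \|\nabla^{\le 1}\partial_\theta^{\le 1}u\|_2$ is actually a purely spatial 2D Sobolev-type inequality holding for all $x\ne 0$; the hypothesis $|x|\ge t/2$ only enters when combining with (i). In polar coordinates $(r,\theta)$, I would first apply 1D Sobolev embedding $H^1(\mathbb{T})\hookrightarrow L^\infty(\mathbb{T})$ in the angular variable to obtain
$$|u(r,\theta_0)|^2 \;\lesssim\; f(r) := \|u(r,\cdot)\|_{L^2_\theta}^2 + \|\partial_\theta u(r,\cdot)\|_{L^2_\theta}^2.$$
Since $u\in\mathcal{S}(\R^2)$ we have $sf(s)\to 0$ as $s\to\infty$, so the fundamental theorem of calculus gives $rf(r)=-\int_r^\infty \partial_s(sf(s))\,ds$. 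Using $r\le s$ on the domain of integration and applying Cauchy-Schwarz, together with the fact that $s\,ds\,d\theta$ is the 2D Lebesgue measure and $\partial_s u=\omega\cdot\nabla u$, yields
$$r f(r) \;\lesssim\; \|u\|_2\|\nabla u\|_2 + \|\partial_\theta u\|_2\|\nabla\partial_\theta u\|_2 \;\lesssim\; \|\nabla^{\le 1}\partial_\theta^{\le 1} u\|_2^2.$$
Combining the two inequalities proves (ii).

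For the interior estimate (i), the guiding philosophy is that on the region $|x|\le t/2$ the Lorentz boost $\Omega_{i0}=t\partial_i+x_i\partial_t$ gives the conversion $\partial_i=t^{-1}(\Omega_{i0}-x_i\partial_t)$ with $|x_i|\le t/2$, which trades one spatial derivative for one vector-field derivative with a gain of $t^{-1}$. My plan is to introduce a smooth cutoff $\chi(|x|/t)$ equal to $1$ on $|x|\le t/2$ and supported in $|x|\le t$, and set $\tilde u(x)=u(t,x)\chi(|x|/t)$ so that $\tilde u(x_0)=u(t,x_0)$ on the interior. I then apply a standard 2D Sobolev inequality (such as $\|\tilde u\|_\infty^2\lesssim \|\tilde u\|_2\,\|\Delta\tilde u\|_2$, provable via Fourier frequency splitting) to bound $|u(t,x_0)|^2$ by norms of $\tilde u$. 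Careful use of the interior identity $\partial_i=t^{-1}(\Omega_{i0}-x_i\partial_t)$, the commutator $[\partial_i,\Omega_{j0}]=\delta_{ij}\partial_t$, and the bound $|x|\le t$ on the support of $\chi$ then converts $\|\Delta\tilde u\|_2$ into a quantity of size $t^{-1/2}\|\partial^{\le 1}\Gamma^{\le 2}u\|_2$, delivering $t^{1/2}|u(t,x_0)|\lesssim\|\partial^{\le 1}\Gamma^{\le 2}u\|_2$.

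The main obstacle is the bookkeeping in step (i): factors of $t$ arising from scaling/cutoff derivatives must match exactly with the gains coming from the Lorentz-boost identity. The reason why only the weaker weight $t^{1/2}$ (rather than the full $t$ appearing in the Klainerman-Sobolev inequality \eqref{ksineqa}) can be recovered is precisely that the identity $t\partial_i=\Omega_{i0}-x_i\partial_t$ improves spatial derivatives only, leaving $\partial_t$-derivatives untouched; this asymmetry is exactly what the scaling field $L_0=t\partial_t+r\partial_r$ would remedy, and its unavailability forces the $t^{1/2}$ loss. A systematic accounting of all commutator and cutoff contributions then closes the argument.
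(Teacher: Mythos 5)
For the exterior estimate \eqref{t12b} your argument matches the paper's: the angular Sobolev embedding $H^1(\mathbb{S}^1)\hookrightarrow L^\infty(\mathbb{S}^1)$ together with a radial fundamental-theorem-of-calculus step in which the $\rho\,d\rho$ measure is recovered (the paper uses $r\le\rho$; you differentiate $sf(s)$). These are the same computation carried out in a slightly different order, and both are correct.

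For the interior estimate \eqref{t12} the paper offers no proof; it cites Lemma~2.4 of \cite{G1992}. Your cutoff Gagliardo--Nirenberg sketch, as written, does not close. There is first a power-counting slip: with $\|\tilde u\|_\infty^2\lesssim\|\tilde u\|_2\|\Delta\tilde u\|_2$ and $\|\tilde u\|_2\lesssim\|u\|_2$ carrying no $t$-gain, you would need $\|\Delta\tilde u\|_2\lesssim t^{-1}\|\partial^{\le1}\Gamma^{\le2}u\|_2$, not $t^{-1/2}$, to conclude $t^{1/2}|u|\lesssim\|\partial^{\le1}\Gamma^{\le2}u\|_2$ (the $t^{-1/2}$ you wrote only yields $t^{1/4}|u|$). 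More seriously, even the corrected target is unattainable via the boost conversion you propose. Iterating $t\partial_i=\Omega_{i0}-x_i\partial_t$ on the support of $\chi$ gives
\[
\Delta u \;=\; t^{-1}\,\Omega_{i0}\partial_i u \;-\; t^{-2}x_i\,\Omega_{i0}\partial_t u \;+\; \frac{|x|^2}{t^2}\,\partial_t^2 u .
\]
The first two terms gain the desired $t^{-1}$, but the coefficient $|x|^2/t^2$ of the last term is merely $O(1)$ on $\supp\chi$, so $\|\chi\Delta u\|_2$ retains a $\|\partial_t^2 u\|_2$ contribution with no decay in $t$ at all. The identity $t\partial_i=\Omega_{i0}-x_i\partial_t$ converts spatial derivatives only, and the residual $\partial_t^2$ is precisely what would require $L_0$ to absorb. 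Your closing heuristic that the spatial/temporal asymmetry ``forces the $t^{1/2}$ loss'' therefore understates the difficulty: along this route the entire $t^{1/2}$ is lost, not half of it. Establishing \eqref{t12} requires the genuine argument in \cite{G1992}, not the naive cutoff/GN route.
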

\begin{proof}
We refer the readers to Lemma 2.4 in \cite{G1992} for the proof of \eqref{t12} and \eqref{t12b} follows from a simple Sobolev inequality: assuming $x=(r\cos\theta,r\sin\theta)$ we get
\begin{align}
    r|u(t,r,\theta)|^2\le& r\int_r^\infty |\partial_\rho (u(t,\rho,\theta))^2|\ d\rho\\
    \lesssim& \int_r^\infty |u(t,\rho,\theta)||\partial_\rho u(t,\rho,\theta)| \rho d\rho\\
    \lesssim& \| \partial^{\le 1}_{\theta} u\|_2^2+\| \partial^{\le 1}_{\theta} \partial_\rho u\|_2^2\qquad (\text{by }|u|\lesssim \|\partial^{\le 1}_\theta u\|_{L^2_\theta})\\
    \lesssim& \|\na^{\le 1}\pa_\theta^{\le 1}u\|_2^2.
\end{align}
\end{proof}

\begin{lem}
Assume $|x|\ge \frac t2$ and $t\ge 1$. Let $\de>0$ be an arbitrarily small parameter, then the following estimate holds true any for $u\in \mathcal{S}(\R^2)$:
\begin{align}\label{t1}
\langle t-|x|\rangle^{\frac{1+\de}{2}}\langle t+|x|\rangle^{\frac12}|u(t,x)|\lesssim\left\|\chi(\frac{4|\cdot|}{t})\langle t-|\cdot|\rangle^{\frac{1+\de}{2}}\pa_r^{\le 1} \pa_\theta^{\le 1}u\right\|_2.
\end{align}
\end{lem}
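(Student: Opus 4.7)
The plan is to work in polar coordinates $(\rho,\theta)$ and exploit the fundamental theorem of calculus together with a 1D Sobolev argument on the circle, very much in the spirit of the previous lemma's proof of \eqref{t12b}, but with the weight $\langle t-\rho\rangle^{1+\delta}$ threaded through. The key preliminary observation is that on the region $|x|\ge t/2$ with $t\ge 1$ we have $\langle t+r\rangle \sim r$, so it suffices to prove the stronger-looking inequality
\begin{align*}
r\,\langle t-r\rangle^{1+\delta}|u(t,r,\theta)|^2 \lesssim \left\|\chi(4|\cdot|/t)\,\langle t-|\cdot|\rangle^{(1+\delta)/2}\,\partial_r^{\le 1}\partial_\theta^{\le 1}u\right\|_2^2.
\end{align*}

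First I would establish the radial piece. Since $u\in\mathcal S$, the fundamental theorem of calculus gives
\begin{align*}
\langle t-r\rangle^{1+\delta}|u(t,r,\theta)|^2 = -\int_r^\infty \partial_\rho\bigl(\langle t-\rho\rangle^{1+\delta}|u(t,\rho,\theta)|^2\bigr)\,d\rho.
\end{align*}
Since $\bigl|\partial_\rho\langle t-\rho\rangle^{1+\delta}\bigr|\le (1+\delta)\langle t-\rho\rangle^\delta\le (1+\delta)\langle t-\rho\rangle^{1+\delta}$, expanding the $\rho$-derivative and applying Cauchy-Schwarz/AM-GM to the cross term $|u||\partial_\rho u|$ yields
\begin{align*}
\langle t-r\rangle^{1+\delta}|u(t,r,\theta)|^2 \lesssim \int_r^\infty \langle t-\rho\rangle^{1+\delta}\bigl(|u|^2+|\partial_\rho u|^2\bigr)(t,\rho,\theta)\,d\rho.
\end{align*}
Multiplying by $r$ and using $r\le\rho$ on the integration domain replaces $r\,d\rho$ by $\rho\,d\rho$, which is exactly the polar area element.

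Second, I would remove the angular dependence on the left by the standard 1D Sobolev embedding on $S^1$:
\begin{align*}
|u(t,r,\theta)|^2 \lesssim \int_0^{2\pi}\bigl(|u|^2+|\partial_\theta u|^2\bigr)(t,r,\theta')\,d\theta'.
\end{align*}
Combining with the radial estimate applied to both $u$ and $\partial_\theta u$ gives
\begin{align*}
r\langle t-r\rangle^{1+\delta}|u(t,r,\theta)|^2 \lesssim \int_0^{2\pi}\!\!\int_r^\infty \langle t-\rho\rangle^{1+\delta}\,|\partial_\rho^{\le 1}\partial_\theta^{\le 1}u|^2\,\rho\,d\rho\,d\theta'.
\end{align*}

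Finally, to produce the cutoff $\chi(4|\cdot|/t)$ on the right-hand side, I extend the $\rho$-integration from $[r,\infty)$ to $[0,\infty)$: since $r\ge t/2$ implies $\rho\ge t/2$ on the original domain, and since $\chi(4\rho/t)=1$ for $\rho\ge t/2$ by definition \eqref{chi}, inserting the nonnegative factor $\chi(4\rho/t)^2$ and enlarging to all $\rho\ge 0$ only increases the right-hand side. Rewriting the resulting integral as $\|\chi(4|\cdot|/t)\langle t-|\cdot|\rangle^{(1+\delta)/2}\partial_r^{\le 1}\partial_\theta^{\le 1}u\|_2^2$ and taking square roots yields \eqref{t1} after using $\langle t+r\rangle\sim r$. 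The main bookkeeping obstacle is the weight manipulation: one has to verify that $|\partial_\rho\langle t-\rho\rangle^{1+\delta}|$ really is absorbed by $\langle t-\rho\rangle^{1+\delta}$ (this is where $\langle\cdot\rangle\ge 1$ is crucial) and that extending the domain of integration produces precisely the cutoff appearing in the statement.
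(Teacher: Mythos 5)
Your proposal is correct. The scheme is the same as the paper's — a weighted one–dimensional Sobolev/FTC argument in the radial variable combined with the $H^1(\mathbb{S}^1)$ embedding in $\theta$, the observation $r\le\rho$ to produce the polar area element, and the fact that the integration region lies in $\{\rho\ge t/2\}$ where $\chi(4\rho/t)\equiv 1$ — but your execution differs in one technical point worth noting. The paper anchors the fundamental theorem of calculus at the light cone: it works with the unbracketed weight $(t-\rho)^{1+\de}$, which vanishes at $\rho=t$, and integrates over $[r,t]$ (resp. $[t,r]$ when $r>t$); this forces a preliminary case $|t-r|\le 1$, handled separately via \eqref{t12b}, so that the bare weight is comparable to $\langle t-\rho\rangle^{1+\de}$ on the remaining region. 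You instead anchor the FTC at $\rho=\infty$, using the Schwartz decay of $u$ to kill the boundary term and the bound $|\partial_\rho\langle t-\rho\rangle^{1+\de}|\le(1+\de)\langle t-\rho\rangle^{1+\de}$ to absorb the differentiated weight; this lets you keep the Japanese bracket throughout and dispenses with both the case split and the appeal to \eqref{t12b}. The only trade-off is that your radial integration interval is unbounded, which is harmless here since the cutoff region $\{\rho\gtrsim t\}$ is unbounded anyway, and both arguments produce the same ($\de$-dependent) implicit constant.
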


\begin{proof}
This lemma is a 2D version of the one in \cite{CX22}.
For the 3D case we refer the details to \cite{CX22} and here we provide a sketch for the sake of completeness. We first write $u(t,x)$ in the polar coordinates as $u(t,r,\theta)$ (abusing the notation in $u$) with $x=(r\cos\theta,r\sin\theta)$, note that we have 
$|u|\lesssim \|\partial_{\theta}^{\le 1} u\|_{L_{\theta}^2}$.
If $|t-r|\le 1$, it follows that 
{\begin{align}
\langle t-r\rangle^{1+\de} t|u|^2\lesssim t|u(t,x)|^2
\lesssim r|u(t,x)|^2\lesssim \left\|\chi(\frac{4|\cdot|}{t})\na^{\le 1}\Ga^{\le 1}u\right\|_2^2\qquad(\text{by } r\ge\frac{t}{2} \mbox{ and } \eqref{t12b}).
\end{align}}
Therefore we can assume $|t-r|\ge 1$. Using the standard Sobolev inequality we have for any $h\in \mathcal S(\R)$ and for $\frac t2\le r< t$,
\begin{align}
(t-r)^{1+\de}r|h(r)|^2\lesssim &\int_{r}^t|\partial_\rho \left((t-\rho)^{1+\de}\rho|h(\rho)|^2\right)|d\rho
\\ \lesssim&\int_{r}^t|h(\rho)|^2 (t-\rho)^\de\rho d\rho+\int_{r}^t|t-\rho|^{1+\de}|h(\rho)|^2d\rho+\int_{r}^t |t-\rho|^{1+\de}\rho|\partial_\rho h(\rho)||h(\rho)|d\rho
\\ \lesssim&\int_{r}^t\Lg t-\rho \Rg^{1+\de}|h(\rho)|^2 \rho d\rho+\int_{r}^t\Lg t-\rho\Rg^{1+\de}|\partial_\rho h(\rho)|^2 \rho d\rho.
\end{align}
For $r>t$, we can obtain that
\begin{align}
(r-t)^{1+\de}r|h(r)|^2\lesssim &\int_{t}^r|\partial_\rho \left((\rho-t)^{1+\de}\rho|h(\rho)|^2\right)|d\rho
\\ \lesssim&\int_{t}^r(\rho-t)^{\de}|h(\rho)|^2 \rho d\rho+\int_{t}^r|t-\rho|^{1+\de}|h(\rho)|^2d\rho+\int_{t}^r |t-\rho|^{1+\de}\rho|\partial_\rho h(\rho)||h(\rho)|d\rho
\\ \lesssim&\int_{t}^r\Lg \rho-t\Rg^{1+\de}|h(\rho)|^2 \rho d\rho+\int_{t}^r\Lg \rho-t\Rg^{1+\de}|\partial_\rho h(\rho)|^2 \rho d\rho.
\end{align}
% For \cmtr{$r\ge2t$}, we use the following Sobolev inequality
% \begin{align}
% r^3|h(r)|^2\lesssim \int_{r}^\infty|\partial_\rho \left(\rho^3|h(\rho)|^2\right)|d\rho
% \lesssim&\int_{r}^\infty|h(\rho)|^2 \rho^2d\rho+\int_{r}^\infty \rho^3|\partial_\rho h(\rho)||h(\rho)|d\rho
% \\ \lesssim&\int_{r}^\infty|h(\rho)|^2 \rho^2d\rho+\cmtr{\int_{r}^\infty(\rho-t)^2|\partial_\rho h(\rho)|^2 \rho^2d\rho}.
% \end{align}
These lead to \eqref{t1} after applying $|u|\lesssim \|u\|_{H^1(\mathbb{S}^1)}$.
%\begin{align}
%\langle t-r\rangle^{\frac12} \langle t+r\rangle^{\frac12}|u(t,x)|\lesssim \|\Gamma^{\le 1}u\|_{2}+\|\langle t-|\cdot|\rangle\nabla \Gamma^{\le 1}u\|_{2}.
%\end{align}
\end{proof}

\begin{lem}\label{lem2.6}
Suppose $u=u(t,x)$ has continuous second order derivatives. Then for  $t>0$ and $r=|x|$ we have
\begin{align}\label{2.260}
&|\langle t-r\rangle\partial_{tt} u|+|\langle t-r\rangle\nabla \partial_t u|+|\langle t-r\rangle\nabla^2 u|
 \lesssim  |\partial \Gamma^{\le 1} u|+\langle t+r\rangle|\Box u|.
\end{align}
\end{lem}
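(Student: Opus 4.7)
The plan is to derive \eqref{2.260} by inverting an explicit $2\times 2$ linear system built from the Lorentz boosts $\Omega_{i0}=t\partial_i+x_i\partial_t$, combined with the wave equation, to expose the $(t-r)$-weighted combinations of second derivatives. Using the commutators $[\partial_t,\Omega_{i0}]=\partial_i$ and $[\partial_j,\Omega_{i0}]=\delta_{ij}\partial_t$, direct computation gives
\begin{align*}
\partial_t\Omega_{i0}u &= \partial_i u + t\partial_t\partial_i u + x_i\partial_t^2 u,\\
\partial_j\Omega_{i0}u &= \delta_{ij}\partial_t u + t\partial_i\partial_j u + x_i\partial_j\partial_t u.
\end{align*}
Multiplying the first identity by $\omega_i$ and summing over $i\in\{1,2\}$ yields the polar relation $r\partial_t^2 u + t\partial_r\partial_t u = F_1$ with $|F_1|\lesssim|\partial\Gamma^{\le 1}u|$. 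Summing the diagonal $j=i$ of the second identity and substituting $\Delta u=\partial_t^2 u-\Box u$ produces $t\partial_t^2 u + r\partial_r\partial_t u = F_2 + t\Box u$ with $|F_2|\lesssim|\partial\Gamma^{\le 1}u|$.

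These two equations form a $2\times 2$ linear system in $\partial_t^2 u$ and $\partial_r\partial_t u$ with coefficient matrix of determinant $r^2-t^2=-(t-r)(t+r)$. Cramer's rule gives $(t^2-r^2)\partial_t^2 u = -rF_1 + tF_2 + t^2\Box u$ and $(t^2-r^2)\partial_r\partial_t u = tF_1 - rF_2 - rt\Box u$. Factoring the common $(t+r)$ out of $|t^2-r^2|=|t-r|(t+r)$, dividing, and using $t^2/(t+r)\lesssim t+r$ and $rt/(t+r)\lesssim t+r$ produces the sharp $(t-r)$-weighted bound
\begin{equation*}
|t-r|\bigl(|\partial_t^2 u|+|\partial_r\partial_t u|\bigr)\lesssim |\partial\Gamma^{\le 1}u|+\langle t+r\rangle|\Box u|.
\end{equation*}
The Cartesian spatial Hessian $\partial_i\partial_j u$ is then extracted from the identity $t\partial_i\partial_j u=\partial_j\Omega_{i0}u-\delta_{ij}\partial_t u-x_i\partial_j\partial_t u$ combined with the weighted bound on $\partial_j\partial_t u$ just obtained; in the exterior region one additionally appeals to the analogous polar identity $t\partial_r\partial_j u + r\partial_j\partial_t u = O(|\partial\Gamma^{\le 1}u|)$ (derived in the same fashion from $\omega_i\partial_j\Omega_{i0}u$) together with the wave equation to invert for the full $\nabla^2$ block with the correct weight.

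Finally, to upgrade from $|t-r|$ to the Japanese bracket $\langle t-r\rangle=\sqrt{1+|t-r|^2}\le 1+|t-r|$, I use that $\Gamma^{\le 1}$ already contains the translations $\partial_t,\partial_1,\partial_2$, so every second partial $\partial^2 u$ is itself a component of $\partial\Gamma^{\le 1}u$. This gives the trivial pointwise bound $|\partial^2 u|\le|\partial\Gamma^{\le 1}u|$, and summing it with the $|t-r|$-weighted estimate just established yields \eqref{2.260}.

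The main obstacle is the treatment of the spatial Hessian $\partial_i\partial_j u$ in the far-exterior region $r\gg t$, where the factor $x_i$ in the basic commutator identity has size of order $r$ and hence cannot be absorbed by merely dividing by $t$. The remedy I would carry out is a case split into the three regimes $r\le t/2$, $t/2\le r\le 2t$, and $r\ge 2t$, combined with the rotational identity $|\Omega_{12}^2 u/r^2|\lesssim|\partial\Gamma^{\le 1}u|/r$ and the wave equation $\Delta u=\partial_t^2 u-\Box u$, to recover each individual Cartesian second derivative from the radial/angular pieces with the desired $(t-r)$ weight.
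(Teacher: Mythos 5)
Your proposal is correct in substance and uses the same underlying ingredients as the paper — the Lorentz boosts $\Omega_{i0}$, the wave equation, and a case split by the ratio $r/t$ — but you reorganize the core algebra differently in a way that is worth noting. The paper splits immediately into two overlapping cases ($r\le 2t$ and $r\ge t/2$): in the first it computes $(t^2-r^2)\Delta u$ from the uncontracted identities $\partial_t\Omega_{i0}u$ and $\partial_i\Omega_{i0}u$, then extracts $\partial_{tt}$, $\partial_t\nabla$, $\nabla^2$ by dividing by $t$; in the second it works with the radial boost $\Omega=\omega_i\Omega_{i0}=t\partial_r+r\partial_t$ applied to $\partial_t u$ and $\partial_r u$, then reassembles $\Delta$ and $\nabla^2$ from the polar Laplacian. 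You instead set up a single $2\times2$ linear system in $\partial_{tt}u$ and $\partial_r\partial_t u$, with coefficient matrix $\begin{pmatrix}r&t\\t&r\end{pmatrix}$, using the $\omega_i$-contraction of $\partial_t\Omega_{i0}u$ and the divergence $\partial_i\Omega_{i0}u$ combined with $\Delta u=\partial_t^2u-\Box u$; Cramer's rule then produces the weighted bounds for $\partial_{tt}u$ and $\partial_r\partial_t u$ uniformly in the ratio $r/t$, with no case split needed at that stage. This is genuinely cleaner than the paper's presentation for these two derivatives, because the determinant $-(t-r)(t+r)$ hands you exactly the weight factorization you want. The case split is then only required for passing from the radial pieces $\partial_{tt}$, $\partial_r\partial_t$, $\Delta$ to the full Cartesian Hessian $\partial_i\partial_j u$ and $\partial_i\partial_t u$, which is precisely what you sketch in your final two paragraphs. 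One point you should make explicit rather than defer: your ``weighted bound on $\partial_j\partial_t u$ just obtained'' is actually a bound on the radial piece $\partial_r\partial_t u$ only; the angular piece $\tfrac1r\partial_\theta\partial_t u=\tfrac1r\partial_t\Omega_{12}u$ needs $\langle t-r\rangle/r\lesssim 1$ (which holds in the exterior regime $r\gtrsim t$ with $r\gtrsim1$), while in the interior $r\lesssim t$ one should revert to the Cartesian identity $t\partial_i\partial_t u=\partial_t\Omega_{i0}u-\partial_i u-x_i\partial_{tt}u$ and divide by $t$ instead, exactly as the paper does. With that detail supplied, your argument is complete and equivalent in strength to the paper's, with the Cramer's-rule step offering a tidier route to the temporal and radial second derivatives.
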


\begin{proof}
It is clear that \eqref{2.260} holds for $|t-r|\le 1$. We then assume $|t-r|\ge 1$.

\texttt{Case 1. $r\le 2t$.} %(Use the form $\partial_i =\frac{\Omega_{i0}}t-\frac{x_i}t\partial_t$).
Recalling $\Omega_{i0}u=x_i\partial_t u+t\partial_i u$, we have for $x\in\R^2$
\begin{equation}
  \begin{aligned}\label{2.290}
&\partial_t\Omega_{i0}u= x_i\partial_{tt}u+\partial_i u+t\partial_t\partial_i u,\\
&\partial_i\Omega_{i0}u= 2\partial_{t}u+r\partial_r\partial_t u+t\Delta u.  
\end{aligned}  
\end{equation}
(Note we used Einstein summation convention here).
This implies that 
\begin{align}
&(t^2-r^2)\Delta u =r^2\Box u+r\partial_r u-2t\partial_t u-x_i\partial_t\Omega_{i0}u+t\partial_i\Omega_{i0}u\qquad(\text{by }\Box =\partial_{tt}-\Delta), \label{2.300}\\
\implies & |(t-r) \Delta u| \lesssim r|\Box u|+|\partial\Gamma^{\le 1} u|,\\
\implies &|( t-r)\partial_{tt} u| \le |( t-r)(\Box u+\Delta u)|
\lesssim  (t+r)|\Box u|+|\partial\Gamma^{\le 1} u|.\label{2.330}
\end{align}
Then we focus on $\partial_{t}\nabla u$ and $\nabla^2 u$.  From \eqref{2.290} we know that for $i=1,2$
\begin{align}
&\partial_i \partial_tu=\frac{1}{t}\partial_t \Omega_{i0}u-\frac1t\partial_i u-\frac{x_i}t\partial_{tt} u,\\
\implies &|(t-r)\partial_i \partial_tu|\lesssim|\partial_t \Omega_{i0}u|+|\partial_i u|+|(t-r)\partial_{tt} u|\lesssim |\partial\Gamma^{\le 1}u|+(t+r)|\Box u| \qquad (\text{by \eqref{2.330}}).\label{2.350}
\end{align}
By the definition of $\Omega_{i0}u=x_i\partial_{t}u+t\partial_i u$, we have for $1\le i,j\le 2$
\begin{align}
\partial_{j}\Omega_{i0}u=&\ \delta_{ij}\partial_{t}u+x_{i}\partial_{t}\partial_{j}u+t\partial_{ij} u,
\\ \implies |(t-r)\partial_{ij}u|=&\frac {(t-r)}t( \partial_{j}\Omega_{i0}u-\delta_{ij}\partial_{t}u-x_{i}\partial_{t}\partial_{j}u)\lesssim |\partial \Gamma^{\le 1}u|+|(t-r)\partial_{t}\partial_ju|
\\ \lesssim &|\partial \Gamma^{\le 1}u|+(t+r)|\Box u|\qquad(\text{by }\eqref{2.350}).
\end{align}
\texttt{Case 2. $r\ge \frac t2$}.  Recall that $\Omega=t\partial_r+r\partial_t$. Therefore we have
%\frac{\widetilde\Omega\cdot \widetilde\Omega}{r^2}$, $\widetilde\Omega=x\wedge \nabla)$.}
%Recalling $\Omega=t\partial_r+r\partial_t$, we have 
\begin{align}
\Omega \partial_{t}u=t\partial_r\partial_{t}u+r\partial_{tt}u,\qquad%=t\partial_r\partial_{t}u+r\Delta u+r\Box u\\
\Omega \partial_{r}u=t\partial_{rr}u+r\partial_t\partial_{r}u.
\end{align}
Then we have 
\begin{align}
(t^2-r^2)\partial_t\partial_r u=&\ t\Omega\partial_t u-r\Omega\partial_r u-tr(\partial_{tt}u-\partial_{rr}u)
\\=&\ t\Omega\partial_t u-r\Omega\partial_r u-tr(\Box u+\frac 1 r\partial_{r}u+\frac1{r^2}\Delta_{\mathbb{S}^1} u),
\\ \implies |(t-r)\partial_{t}\partial_r u|\lesssim& |\partial\Gamma^{\le 1} u|+(t+r)|\Box u|,
\end{align}
recalling $\Delta =\partial_{rr}+\frac 1r\partial_r+\frac{\Delta_{\mathbb{S}^1}}{r^2}$. Here $\Delta_{\mathbb{S}^1}$ is the Laplace-Beltrami operator $\partial_{\theta\theta}$.
{Therefore we get $|( t-r)\partial_{t}\nabla u|\lesssim |\partial\Gamma^{\le 1} u|+(t+r)|\Box u|$.}
On the other hand, we have 
\begin{align}
&t^2\partial_{rr}u-r^2\partial_{tt}u=t\Omega\partial_r u-r\Omega\partial_t u,
\\ \implies& (t^2-r^2)\partial_{rr}u=2r\partial_r u+\Delta_{\mathbb S^1}u+r^2\Box u+t\Omega\partial_r u-r\Omega\partial_t u,
\\ \implies&|( t-r)\partial_{rr}u|\lesssim |\partial\Gamma^{\le 1} u|+(t+r)|\Box u|. \label{2.420}
% \\ \implies& (t^2-r^2)\Delta u-\frac{2t^2}{r}\partial_{r}u-\frac{t^2}{r^2}\widetilde\Omega\cdot\widetilde\Omega u-r^2\Box u=t\Omega\partial_r u-r\Omega\partial_t u, 
% \\ \implies&\langle t-r\rangle|\Delta u|\lesssim |\partial\Gamma^{\le 1}u|+(t+r)|\Box u|,
% \\ \implies &\langle t-r\rangle (|\partial_{tt} u|+|\partial_{rr} u|\lesssim |\partial\Gamma^{\le 1} u|+(t+r)|\Box u|.
\end{align}
This implies that $\langle t-r\rangle|\Delta u|$ and $\langle t-r\rangle|\nabla^2 u|$ can be controlled by RHS of \eqref{2.420}. Furthermore we use the identity $\partial_{tt}u =\Delta u+\Box u$ and deduce that $\langle t-r\rangle|\partial_{tt} u|\lesssim |\partial\Gamma^{\le 1} u|+(t+r)|\Box u|$. Thus we complete the proof.

\end{proof}

\begin{lem}
Assume $h\in \mathcal{S}(\R^2)$. Then
\begin{align}\label{S-2}
  \|h\|_{2}\lesssim \|\Lg x\Rg\nabla h\|_{2}.
\end{align}
\end{lem}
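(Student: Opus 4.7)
The plan is to derive the inequality from a single integration by parts, exploiting the identity $\nabla\cdot x=2$ in $\R^2$. Schwartz decay of $h$ makes all boundary terms vanish, so this is a quick one-line argument once the right divergence identity is in hand. There is no subtle obstacle; the main "trick" is to notice that the weight $\langle x\rangle$ on the right-hand side is produced automatically when one commutes $x$ past the derivative.

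Concretely, I would start from
\begin{equation*}
\int_{\R^2}|h|^2\,dx \;=\; \tfrac{1}{2}\int_{\R^2}|h|^2\,(\nabla\cdot x)\,dx
\;=\; -\tfrac{1}{2}\int_{\R^2}\nabla(|h|^2)\cdot x\,dx
\;=\; -\int_{\R^2} h\,(\nabla h\cdot x)\,dx,
\end{equation*}
where the integration by parts is justified since $h\in\mathcal{S}(\R^2)$. Then Cauchy--Schwarz together with the pointwise bound $|x|\le \langle x\rangle$ gives
\begin{equation*}
\|h\|_2^2 \;\le\; \int_{\R^2}|h|\,|x|\,|\nabla h|\,dx \;\le\; \|h\|_2\,\|\,|x|\nabla h\|_2 \;\le\; \|h\|_2\,\|\langle x\rangle\nabla h\|_2.
\end{equation*}
Dividing by $\|h\|_2$ (the case $h\equiv 0$ being trivial) yields \eqref{S-2}.

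The only issue to watch is the boundary contribution at infinity, but it is handled immediately by the Schwartz assumption (both $|h|^2|x|$ and its normal flux decay faster than any polynomial). The argument is dimension-insensitive: in $\R^d$ the identity $\nabla\cdot x=d$ produces the same bound with an overall constant $2/d$, so the result extends to arbitrary dimension with essentially no change, which is consistent with how the lemma will later be applied as a generic Hardy--Poincaré type estimate in the energy and pointwise analyses.
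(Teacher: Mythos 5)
Your proof is correct and complete. The identity $\nabla\cdot x = 2$ followed by integration by parts and Cauchy--Schwarz is exactly the standard one-line Hardy-type argument, and the Schwartz hypothesis does indeed dispose of the boundary term. The paper itself does not supply a proof of this lemma: it only refers to the 3D version in \cite{CX22} and states the 2D case is analogous, so there is no proof in the paper to compare against line by line. Your self-contained argument is the natural one (and almost certainly what the cited reference does in spirit); your remark about the dimension-dependent constant $2/d$ is also accurate. One small point worth keeping in mind: the computation $\nabla(|h|^2)=2h\nabla h$ as written presumes $h$ real-valued; for complex-valued $h$ one should use $\nabla|h|^2 = 2\,\Re(\bar h\,\nabla h)$, after which the same Cauchy--Schwarz step goes through unchanged.
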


\begin{proof}We refer the readers to the proof of a 3D version to \cite{CX22}; the 2D case is very similar therefore we omit the details.
\end{proof}

\subsection{Pointwise estimates for the wave component}
We here collect some pointwise estimates for the wave components.

Let $v(t,x)$ be a solution of the following system:
\begin{align*}
 \begin{cases}
 \Box v=F(t,x);\\
 (v,\pa_{t}v)|_{t=0}=(v_{0},v_1).
 \end{cases}
\end{align*}
It is known that the solution to such system can be written as the following mild form:
\begin{align*}
    v(t,x)=&(\cos t|\na|\ v_0)(x)+(\frac{\sin t|\na|}{|\na|}v_1)(x)+\int_{0}^{t}(\frac{\sin (t-s)|\na|}{|\na|}F)(s,x)ds\\ \coloneqq & v_{\text{hom}}+v_{\text{inh}},
   % \\=&\frac1{2\pi}\pa_{t}\int_{|x-y|<t}\frac{v_{0}(y)dy}{\sqrt{t^2-|x-y|^2}}+\frac1{2\pi}\int_{|x-y|<t}\frac{v_{1}(y)dy}{\sqrt{t^2-|x-y|^2}}
  %  \\&+\frac1{2\pi}\int_{0}^{t}\int_{|x-y|<t-s}\frac{F(s,y)dyds}{\sqrt{(t-s)^2-|x-y|^2}},
\end{align*}
where $|\na|$ is the differential operator that is corresponding to the Fourier multiplier $|\xi|$; moreover $v_{\text{hom}}=\cos t|\na|v_0+\frac{\sin t|\na|}{|\na|}v_1$ and $v_{\text{inh}}=\int_{0}^{t}(\frac{\sin (t-s)|\na|}{|\na|}F)(s,x)ds$. It is well known that $v_{\text{hom}}$ is the solution to the following homogeneous wave system
\begin{align}\label{vhom}
    \begin{cases}
    \Box v_{\text{hom}}=0,\\
(v_{\text{hom}},\pa_{t}v_{\text{hom}})|_{t=0}=(v_{0},v_1)
    \end{cases}
\end{align}
and in particular
\begin{align}\label{vhom1}
    \begin{cases}
       & (\cos t|\na| v_0)(x)=\frac1{2\pi}\pa_{t}\int_{|x-y|<t}\frac{v_{0}(y)dy}{\sqrt{t^2-|x-y|^2}}\\
       & (\frac{\sin t|\na|}{|\na|}v_1)(x)=\frac1{2\pi}\int_{|x-y|<t}\frac{v_{1}(y)dy}{\sqrt{t^2-|x-y|^2}}.
    \end{cases}
\end{align}

\begin{lem}[Homogeneous estimate] \label{lemhom}Assume $v_{\text{hom}}$ is defined above in \eqref{vhom} and assume $v_0, v_1\in \mathcal{S}(\R^2)$, then the following estimate holds: for any $t\ge 2$ and $x\in\R^2$,
 \begin{align}\label{BB1}
   | v_{\text{hom}}(t,x)|\lesssim t^{-\frac12}\Lg t-|x|\Rg^{-{\frac12}}(\|\Lg \cdot\Rg^{\frac12}\Lg\nabla\Rg v_{1}\|_{L^1}+\|\Lg \cdot\Rg^{\frac12}\Lg\nabla\Rg^{ 2}v_{0}\|_{L^1}).
 \end{align}
\end{lem}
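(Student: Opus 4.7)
The strategy is to argue directly from the Kirchhoff-type representation \eqref{vhom1}, reducing the claimed bound to a single weighted $L^1 \to L^\infty$ kernel estimate and then carrying that estimate out by a case analysis on the geometry of the point $(t,x)$.

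First I would split $v_{\text{hom}} = u_1 + u_0$ according to the two pieces in \eqref{vhom1}. The term $u_1$ is already a kernel integral in $v_1$. To handle $u_0$ I would make the substitution $y = x + tz$, which turns $u_0$ into $\partial_t \bigl( t \int_{|z|<1} v_0(x+tz)/\sqrt{1-|z|^2}\,dz \bigr)$; differentiating in $t$ and changing variables back yields
\begin{equation*}
u_0(t,x) \;=\; \frac{1}{2\pi t}\int_{|y-x|<t}\frac{v_0(y) + (y-x)\cdot\nabla v_0(y)}{\sqrt{t^2-|y-x|^2}}\,dy,
\end{equation*}
so that the time derivative is eliminated at the cost of one spatial derivative on $v_0$ together with an extra $1/t$ factor. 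Since $|y-x|<t$, the whole problem reduces to proving, for $t \ge 2$ and any $f \in \mathcal{S}(\R^2)$, the ``master estimate''
\begin{equation*}
\mathcal{J}_f(t,x) \;:=\; \int_{|y-x|<t}\frac{|f(y)|}{\sqrt{t^2-|y-x|^2}}\,dy \;\lesssim\; \frac{1}{\sqrt{t\,\langle t-|x|\rangle}}\,\|\langle y\rangle^{\frac12}\langle\nabla\rangle f\|_{L^1_y},
\end{equation*}
applied with $f = v_1,\,v_0,\,\nabla v_0$; the second derivative in $\langle\nabla\rangle^2 v_0$ on the right-hand side of the lemma is exactly the one already spent in the formula above for $u_0$.

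Next I would prove the master estimate in polar coordinates $y = x + r\omega$ centered at $x$, with $r \in [0,t)$ and $\omega\in S^1$. The range $r \le t/2$ is easy: there $\sqrt{t^2-r^2}\gtrsim t$, so the contribution is bounded by $t^{-1}\|f\|_{L^1}$, which is already dominated by the target using $\langle t-|x|\rangle \lesssim t$. On the cone range $t/2 \le r < t$ one has $\sqrt{t^2-r^2}\sim\sqrt{t}\sqrt{t-r}$, and I would split further. \emph{Case $|x|\ge 2t$}: the support constraint $|y-x|<t$ forces $|y|\ge |x|/2 \sim \langle t-|x|\rangle$, so the weight $\langle y\rangle^{1/2}$ in the $L^1$ norm directly supplies the missing factor of $\langle t-|x|\rangle^{1/2}$. \emph{Case $|x|\le 2t$}: integrate by parts in $r$ against the anti-derivative $-2\sqrt{t-r}$ of $1/\sqrt{t-r}$; the boundary term at $r=t$ vanishes, and one picks up $\partial_r f(x+r\omega) = \omega\cdot\nabla f(x+r\omega)$, which is exactly what the $\langle\nabla\rangle$ on the right-hand side norm is designed to absorb. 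A standard $H^1(S^1)\hookrightarrow L^\infty(S^1)$ Sobolev embedding in $\omega$ then converts the pointwise $f$ into an angular $L^2$ norm, and the radial $L^1$ integration with the weight $\langle y\rangle^{1/2}$, together with $r \sim t$ to absorb one factor of $\sqrt{t}$, yields the claimed decay.

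\textbf{Main obstacle.} The most delicate sub-case is $|x|$ close to, but strictly less than, $t$, where $\langle t-|x|\rangle \ll t$ but a typical $|y|$ in the cone region is still of order $t$, so the weight $\langle y\rangle^{1/2}$ is much larger than $\langle t-|x|\rangle^{1/2}$ and cannot supply the missing factor as in the $|x|\ge 2t$ case. There the radial integration by parts and the angular Sobolev step must be combined so that the $\langle\nabla\rangle$ factor is spent exactly on taming the $1/\sqrt{t-r}$ light-cone singularity without any further loss in powers of $\langle t-|x|\rangle$. Keeping this accounting tight, while preserving the $\langle y\rangle^{1/2}$ weight at each step and managing the mild $L^1$-non-boundedness of $\langle\nabla\rangle^{-1}$ (which is why one picks up $\langle\nabla\rangle^2 v_0$ rather than $\langle\nabla\rangle v_0$), is the technical heart of the proof.
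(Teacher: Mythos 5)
Your skeleton is the same as the paper's (Kirchhoff formula \eqref{vhom1}, trading the $\partial_t$ on the $v_0$-piece for one spatial derivative plus a factor $t^{-1}$, then a weighted $L^1\to L^\infty$ bound for the kernel $(t^2-|y-x|^2)^{-1/2}$), and your ``master estimate'' is indeed true, but your proposed proof of it has a genuine gap in the cone range: integrating by parts in $r$ over the \emph{whole} region $t/2\le r<t$ is lossy. After the IBP the bulk term carries $\sqrt{t^2-r^2}\sim\sqrt{t}\sqrt{t-r}$, and to dominate it by the target you need, pointwise on the support, $\sqrt{t-r}\,\langle t-|x|\rangle^{1/2}\lesssim\langle y\rangle^{1/2}$ (after using $r\sim t$). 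The only available relation is the triangle inequality $\langle t-|x|\rangle\lesssim\langle t-|y-x|\rangle\,\langle y\rangle$, which gives $\langle y\rangle^{1/2}/\langle t-|x|\rangle^{1/2}\gtrsim\langle t-r\rangle^{-1/2}$, so the needed inequality fails as soon as $t-r\gg\sqrt{t}$. Concretely, take $x=0$ and $f$ a smooth unit bump supported in the annulus $\{t/2<|y|<t/2+1\}$: the actual integral is $O(1)$ and the target right-hand side is $\sim t^{1/2}$, but your post-IBP bulk term is of size $\sim t$, so the claimed absorption cannot close. The paper avoids this by never integrating by parts away from the light cone: on $\{|y-x|<t-1\}$ it uses only $t^2-|y-x|^2\ge t\,(t-|y-x|)$ together with the triangle inequality above to get the pointwise kernel bound $t^{-1/2}\langle t-|x|\rangle^{-1/2}\langle y\rangle^{1/2}$ (no derivative spent there), and confines the IBP (via $\rho\,d\rho/\sqrt{t^2-\rho^2}=-d\sqrt{t^2-\rho^2}$) to the unit shell $t-1<|y-x|<t$, where $t-\rho\le 1$ and nothing is lost. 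Moreover, your plan only notes that the boundary term at $r=t$ vanishes; the boundary term at the inner edge of the IBP region does not vanish and is a substantive piece — in the paper it is rewritten as $-\int_{t-1}^{\infty}\frac{d}{d\rho}(\cdots)\,d\rho$, which is precisely where the derivative and the $\langle y\rangle^{1/2}$ weight are spent.

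Two smaller points. First, your treatment of the easy range $r\le t/2$ invokes $\langle t-|x|\rangle\lesssim t$, which is false when $|x|\gg t$; there you must use the same observation as in your exterior case, namely $|y|\gtrsim|x|\sim\langle t-|x|\rangle$ on $\{|y-x|<t\}$, so the weight supplies the missing factor. Second, the $H^1(S^1)\hookrightarrow L^\infty(S^1)$ embedding has no role in this $L^1$-based estimate: one simply integrates in $\omega$ and uses the polar Jacobian $\rho\,d\rho\,d\omega$ with $\rho\gtrsim t$ on the relevant region to convert the $(\rho,\omega)$ integrals back into $L^1_y$ norms and to produce the extra factor $t^{-1/2}$; this is how the paper handles both the boundary and bulk terms ($J_1$--$J_4$). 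Your closing remark that $\langle\nabla\rangle^2 v_0$ appears because $\langle\nabla\rangle^{-1}$ is not $L^1$-bounded is also off target: the second derivative comes structurally from the $\partial_t$ in the $v_0$-term (one derivative) plus the one spent taming the light-cone singularity, exactly as in your own reduction.
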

\begin{proof}
We postpone the proof to the Appendix (see Lemma~\ref{lemBB2}).
\end{proof}

\begin{lem}[Extra homogeneous estimate.] \label{lemB2}Assume $v_{\text{hom}}$ is defined as in \eqref{vhom} above and assume $v_0, v_1\in \mathcal{S}(\R^2)$. Then for any $t\ge 2$ and $x\in\R^2$, there exists some constant $C>0$ such that the following holds: for $r=|x|$ we have
 \begin{align}\label{BB3}
   | \pa v_{\text{hom}}(t,x)|\le C \Lg t\Rg^{-\frac12}\Lg t-|x|\Rg^{-\frac32}(\|\Lg \cdot\Rg^{\frac32}\Lg\nabla\Rg^{3}v_{0}\|_{L^1})+\|\Lg \cdot\Rg^{\frac32}\Lg\nabla\Rg^{2} v_{1}\|_{L^1}).
 \end{align}
\end{lem}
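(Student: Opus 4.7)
My plan is to adapt the proof of Lemma~\ref{lemhom} (postponed to Lemma~\ref{lemBB2} in the Appendix), pushing the argument one step further to extract an additional $\Lg t-|x|\Rg^{-1}$ factor at the cost of one extra $\Lg\cdot\Rg$ weight and one extra derivative on the initial data. Since $\Box v_{\mathrm{hom}}=0$, each component of $\pa v_{\mathrm{hom}}$ is itself a free-wave solution: $\pa_j v_{\mathrm{hom}}$ solves the wave equation with data $(\pa_j v_0,\pa_j v_1)$, while $\pa_t v_{\mathrm{hom}}$ solves it with data $(v_1,\Delta v_0)$. Using the representation formula \eqref{vhom1} and rescaling $z=t\zeta$, every component of $\pa v_{\mathrm{hom}}(t,x)$ is a finite sum of integrals of the form
\begin{align}
\int_{|\zeta|<1}\frac{\alpha(\zeta)\,\na^{a} v_{0}(x+t\zeta)+\beta(\zeta)\,\na^{b} v_{1}(x+t\zeta)}{\sqrt{1-|\zeta|^2}}\,t^{k}\,d\zeta,
\end{align}
with $a\le 3$, $b\le 2$ and $k$ determined by the chain rule, and $\alpha,\beta$ bounded smooth functions. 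It therefore suffices to bound each such integral by $C\Lg t\Rg^{-1/2}\Lg t-|x|\Rg^{-3/2}\|\Lg\cdot\Rg^{3/2}\na^{\le a}f\|_{L^{1}}$.

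The natural next step is to split $\R^{2}$ into three regions. In the interior region $|x|\le t/2$ one has $\Lg t-|x|\Rg\sim t$, so the target reduces to a plain $t^{-2}$ decay; two integrations by parts in $\zeta$ transfer derivatives from the singular kernel $(1-|\zeta|^2)^{-1/2}$ onto $f(x+t\zeta)$, producing the factor $t^{-2}$ together with precisely the weights $\Lg\cdot\Rg^{3/2}\na^{\le 3} v_{0}$ and $\Lg\cdot\Rg^{3/2}\na^{\le 2} v_{1}$. In the near-cone exterior region $|x|\ge t/2$, $|t-|x||\le 1$, the target $\Lg t\Rg^{-1/2}$ is the standard 2D $L^{1}\to L^{\infty}$ dispersive estimate and follows essentially verbatim from the proof of Lemma~\ref{lemhom}. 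In the critical far-from-cone exterior region $|x|\ge t/2$, $|t-|x||\ge 1$, the integrand concentrates near $\zeta\approx -x|x|^{-1}$, and I would localize there via a smooth cutoff and then perform one integration by parts in the tangential direction to extract the extra $\Lg t-|x|\Rg^{-1}$, the $\Lg\cdot\Rg^{3/2}$ weight on the data absorbing the growth of the resulting boundary and bulk contributions.

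The main obstacle is this third region, where two technical difficulties arise: (i) making the tangential integration by parts rigorous near the boundary $|\zeta|=1$ where $(1-|\zeta|^{2})^{-1/2}$ is singular, which I would handle by a dyadic cutoff combined with a sharp boundary-layer estimate pairing the $\Lg\cdot\Rg^{3/2}$ weight against the square-root singularity (so that in the boundary layer of width $\Lg t-|x|\Rg/t$ the remaining $\Lg t-|x|\Rg^{-1/2}$ is obtained exactly as in Lemma~\ref{lemhom}); and (ii) bookkeeping that the final bound involves only the norms $\|\Lg\cdot\Rg^{3/2}\Lg\na\Rg^{3}v_{0}\|_{L^{1}}$ and $\|\Lg\cdot\Rg^{3/2}\Lg\na\Rg^{2}v_{1}\|_{L^{1}}$, with no higher derivatives or weights. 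Combining the three regional estimates and using that $\Lg t-|x|\Rg\le\Lg t\Rg$ throughout then yields the claimed pointwise bound \eqref{BB3}.
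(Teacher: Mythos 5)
The paper's proof is much shorter and uses a genuinely different route that your proposal misses: it never returns to the representation formula. The key is the algebraic identity
\begin{align*}
(t^2-r^2)(\pa_t,\pa_1,\pa_2)=\bigl(tL_0-x_i\Omega_{i0},\ -x_1L_0+t\Omega_{10}+x_2\pa_\theta,\ -x_2L_0+t\Omega_{20}-x_1\pa_\theta\bigr),
\end{align*}
which for $t\ge 2$ gives the pointwise bound $\Lg t-|x|\Rg\,|\pa v_{\text{hom}}|\lesssim|\widetilde\Gamma v_{\text{hom}}|$. Since $[\Box,\Gamma]=0$ and $[\Box,L_0]=2\Box$, each $\widetilde\Gamma_i v_{\text{hom}}$ is again a homogeneous wave, so Lemma~\ref{lemhom} applies directly to $\widetilde\Gamma v_{\text{hom}}$ and yields $|\widetilde\Gamma v_{\text{hom}}|\lesssim t^{-1/2}\Lg t-r\Rg^{-1/2}(\text{data norms for }\widetilde\Gamma v_{\text{hom}})$. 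Multiplying gives $t^{-1/2}\Lg t-r\Rg^{-3/2}$, and the extra $\Lg\cdot\Rg$ weight and $\Lg\na\Rg$ in \eqref{BB3} come simply from the fact that $\widetilde\Gamma|_{t=0}$ is a first-order operator with linearly growing coefficients. That is essentially the whole proof.

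Your proposal instead re-derives a pointwise bound from scratch via the representation formula, a three-region split, and integrations by parts. The interior and near-cone regions are plausible, but the critical far-from-cone exterior region (the only place the extra $\Lg t-r\Rg^{-1}$ is nontrivial) is exactly where the argument as written has a gap. The mechanism you propose there, ``one integration by parts in the tangential direction,'' is not the right one: the representation \eqref{vhom1} has no oscillatory phase, so tangential integration by parts gains nothing; the improved decay comes from the \emph{radial} structure near the light cone $|x-y|=t$, which is what the vector-field identity above encodes. Moreover, a radial integration by parts produces the kernel $|x-y|(t^2-|x-y|^2)^{-3/2}$, whose boundary-layer singularity $(t-|x-y|)^{-3/2}$ is non-integrable, which is precisely the difficulty you acknowledge in your item (i) and leave unresolved. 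So as written the proposal does not close; the clean way through is the vector-field identity $(t^2-r^2)\pa\sim\widetilde\Gamma$ combined with Lemma~\ref{lemhom}, as in the paper.
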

\begin{proof}
    We first note that by a simple observation:
    \begin{align}
    (t^2-r^2)(\pa_t,\pa_1,\pa_2)=(tL_0-x_i\Omega_{i0}, -x_1L_0+t \Omega_{10}+x_{2}\pa_{\theta}, -x_2L_0+t \Omega_{20}-x_{1}\pa_{\theta}). \label{eq:pagaid}
     \end{align}
% &(t^2-r^2)\pa_t=tL_0-x_i\Omega_{i0};\
% (t^2-r^2)\pa_1= -x_1L_0+t \Omega_{10}+x_{2}\pa_{\theta};\ \
% (t^2-r^2)\pa_2= -x_2L_0+t \Omega_{20}-x_{1}\pa_{\theta};
It then follows from \eqref{eq:pagaid} that for $t>0$
 \begin{align}\label{DE-1}
       &\Lg t-|x| \Rg|\pa   v_{\text{hom}}|\lesssim  |\widetilde{\Ga} v_{\text{hom}} (t,x)|, 
\end{align}
where $\widetilde{\Ga}$ is defined as in \eqref{def-tdGamma}. Note that
\begin{align}\label{2.47}
    [\Ga,\Box]=0\ ,\quad [L_0,\Box]=-2\Box,
\end{align}
where $[\cdot,\cdot]$ is the usual commutator, then in effect $\widetilde{\Ga}v_{\text{hom}}$ is again a solution to the system \eqref{vhom} with certain initial data. Then combining \eqref{BB1}, \eqref{DE-1} and the initial condition estimate (cf.\cite{CX22}). We thus conclude \eqref{BB3}.
\end{proof}

%\begin{lem}[Inhomogeneous estimate]\label{cor:gradinh}
%Suppose $v_{\text{inh}}$ is defined as above in \eqref{vinh}. 
%where with no loss we assume $x=(r,0)$ and $y=\la(\cos \psi, \sin \psi)$,
%Assume $t\ge2$ and $0<\mu<\frac12$, it then follows that 
%\begin{align}
% \Lg |x|\Rg^{\frac12} \Lg |x|-t\Rg|(\na v)_{\mbox{inh}}(t,x)|\lesssim\sup_{\{(s,y):s\in(0,t),|x-y|<t-s,|y|\ge\frac{s}{2}\}}\{|y|^{\frac12}\Lg |y|+s\Rg^{1+\mu}\Lg |y|-s\Rg(|\na^{\le 1}F(s,y)|+|\pa_\theta F(s,y)|)\}\\+\sup_{\{(s,y):s\in(0,t),|x-y|<t-s,|y|<\frac{s}{2}\}}\{|y|^{\frac12}\Lg |y|+s\Rg^{1+\mu}\Lg |y|\Rg(|\na^{\le 1}F(s,y)|+|\pa_\theta F(s,y)|)\}. 
% \end{align}
%\end{lem}

%Let $\varphi=\frac{r}{\Lg r\Rg}$. Then
%\begin{align*}
%&0\leq \int (\Lg r\Rg \pa_{r}h+\varphi h)^2\ dx=\int \left(\Lg r\Rg^2(\pa_{r}h)^2+\Lg r\Rg\varphi\pa_{r}(h^2)+\varphi^2h^2\right) dx,
%\\ \implies&\int \Lg r\Rg^2 |\pa_rh|^2\ dx \geq \int \left(\frac{1}{r^2}\pa_{r}(r^2\Lg r\Rg \varphi)-\varphi^2\right)h^2 \ dx\geq \int (3-(\frac{r}{\Lg r\Rg})^2)h^2\ dx\gtrsim \int |h|^2\ dx.
%\end{align*}

%\begin{proof}
%We leave the proof in the Appendix.
%\end{proof}

\subsection{Pointwise estimates for the Klein-Gordon component}
We here collect some previously known pointwise estimates for the Klein-Gordon components, one can refer to \cite{G1992} (also see \cite{D,DM21}).
Let $\{p_j\}_{0}^\infty$ be the usual Littlewood-Paley partition of the unity
\[\sum_{j\ge 0} p_j(s)=1,\qquad s\ge 0,\]
which satisfies 
\begin{align}
0\le p_j\le 1,\qquad p_j\in C_0^\infty(\R)\qquad \text{for all }j\ge 0, 
\end{align}
and 
\begin{align}
\supp\ p_0\subset(-\infty, 2],\qquad \supp\ p_j\subset[2^{j-1},2^{j+1}]\qquad \text{for all }j\ge1.
\end{align}
\begin{lem}[Georgiev]\label{kgt32}
Let $w$ be the solution of the Klein-Gordon equation
\begin{align}
\begin{cases}
\Box w+w=f,\\
 (w,\partial_t w)|_{t=0} =(w_0,w_1),
\end{cases}
\end{align}
with $f=f(t,x)$ a sufficiently nice function. Then for all $t\ge 0$, it holds
\begin{align}
\langle t+|x|\rangle|w(t,x)|\lesssim \sum_{j\ge 0}(\sup_{0< s\le t}p_j(s)\|\langle s+|\cdot|\rangle\Gamma^{\le 4}f(s,\cdot)\|_{2}+\|\langle |\cdot|\rangle p_j(|\cdot|)\Gamma^{\le 5}w(0,\cdot)\|_{2}).
\end{align}

\end{lem}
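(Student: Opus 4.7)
The plan is to follow Georgiev's original dyadic/stationary-phase strategy: first decompose $w = w_{\mathrm{hom}} + w_{\mathrm{inh}}$ via Duhamel's formula; then dyadically decompose each contribution---spatially for $w_{\mathrm{hom}}$ using $p_j(|x|)$ applied to the data, and temporally for $w_{\mathrm{inh}}$ using $p_j(s)$ inserted under the time integral---and finally establish a sharp pointwise decay estimate for each dyadic piece, summing over $j$ at the end.

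The core technical ingredient is the following localized estimate for the free Klein-Gordon equation: if $u$ solves $\Box u + u = 0$ with data $(u_0,u_1)$ having spatial support in the shell $\{|x|\lesssim 2^{j+1}\}$, then
\begin{equation*}
\langle t+|x|\rangle\,|u(t,x)| \lesssim \|\langle |\cdot|\rangle p_j(|\cdot|)\Gamma^{\le 5}u(0,\cdot)\|_{2}.
\end{equation*}
I would prove this by writing $u$ via its Fourier representation, factoring out the oscillatory kernel $e^{i(x\cdot\xi\pm t\sqrt{1+|\xi|^2})}$, and performing stationary-phase analysis in 2D. The stationary point occurs where $\nabla_\xi$ of the phase vanishes, and the resulting gain is exactly $\langle t+|x|\rangle^{-1}$. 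The vector fields $\Gamma^{\le 5}$ (with no $L_0$) enter through commutation with the Fourier multipliers: $\Omega_{12}$ and $\Omega_{i0}$ combined with the spatial cutoff $p_j(|\cdot|)$ compensate for the absence of the scaling field by absorbing the extra $|x|$-weights that stationary-phase analysis produces. A Sobolev embedding on $\mathbb S^1$ (for the angular direction) plus a one-dimensional Sobolev estimate in the radial direction then converts the $L^2$ control to the desired $L^\infty$ bound.

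For $w_{\mathrm{inh}}$, Duhamel's formula expresses the solution at time $t$ as an integral of evolved time-slices of $f(s,\cdot)$. I would treat $f(s,\cdot)$ as initial data at time $s$ for a homogeneous Klein-Gordon equation, apply the localized estimate above to each piece $p_j(s)f(s,\cdot)$, and exploit the weight $\langle s+|\cdot|\rangle$ to absorb the $s$-integration dyadically: on the support of $p_j(s)$ one has $s\sim 2^j$ and the interval of integration has length $\sim 2^j$, which is exactly absorbed by the weight, producing the $\sup_{0<s\le t}p_j(s)\|\cdot\|$ structure rather than a time integral. The drop from $\Gamma^{\le 5}$ on initial data to $\Gamma^{\le 4}$ on the forcing reflects that one order of vector-field control is consumed in the Duhamel time-parameter bookkeeping.

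The main obstacle is the localized dispersive estimate in the second paragraph: making the stationary-phase bookkeeping work with precisely the right number of vector fields, in the absence of $L_0$, demands careful accounting of how the spatial cutoff $p_j(|\cdot|)$ can play the role of a surrogate for scaling information. In particular, one must verify that commutators $[\Gamma,p_j(|\cdot|)]$ remain controllable on the relevant dyadic scales, that the Sobolev constants do not degenerate in $j$, and that the non-stationary contributions can be integrated by parts using $\Gamma^{\le 5}$ without generating uncontrolled factors of $\langle t-|x|\rangle$. Once this localized kernel bound is in place, the remaining assembly---dyadic summation and Duhamel---is essentially routine.
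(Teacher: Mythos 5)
The paper does not prove this lemma: it is stated as a known result cited from Georgiev \cite{G1992} (with a subsequent variant attributed to \cite{D,DM21}), so there is no argument in the paper to compare against. Your sketch follows a plausible route along Klainerman--Georgiev lines: a Duhamel split, a dyadic decomposition (spatial for the data, temporal for the source), a weighted dispersive estimate for each free Klein--Gordon piece obtained via Fourier representation and stationary phase, and the vector fields $\Omega_{ij},\Omega_{i0}$ together with the spatial cutoff $p_j(|\cdot|)$ standing in for the unavailable scaling field $L_0$.

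However, the Duhamel assembly is not ``essentially routine,'' and as written your argument does not close. Applying the localized homogeneous estimate to $S(t-s)(0,f(s))$ produces a factor $\langle (t-s)+|x|\rangle^{-1}$, not the target $\langle t+|x|\rangle^{-1}$; these are comparable only when $s\lesssim t/2$. For dyadic blocks with $2^j\sim t$, the heuristic that the length-$2^j$ interval of integration is ``exactly absorbed by the weight'' supplies no mechanism for producing the needed $\langle t+|x|\rangle^{-1}$ when $t-s$ is small, so the near-time contribution $s\sim t$ must be controlled by a separate device (finite propagation speed, an explicit decomposition of the Klein--Gordon kernel, or interpolation against an unweighted Sobolev bound). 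Similarly, saying that the drop from $\Gamma^{\le 5}$ to $\Gamma^{\le 4}$ ``is consumed in the Duhamel time-parameter bookkeeping'' conceals a real issue: the boosts $\Omega_{i0}=t\partial_i+x_i\partial_t$ are not invariant under the time shift $t\mapsto t-s$ built into Duhamel, so converting the vector fields that appear in the homogeneous estimate into vector fields acting on $f(s,\cdot)$ at time $s$ carries nontrivial, $s$-dependent coefficients that must be tracked explicitly.
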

In this article we shall use the following variation as in \cite{D,DM21}.
\begin{lem}[Dong et al.]
With the same settings as in Lemma \ref{kgt32}, letting $\delta'>0$ and assuming
\begin{align} 
 \sum_{|I|\le 4}\|\Lg s+|\cdot|\Rg \Ga^I f(s,\cdot)\|_2\le C_f\Lg s\Rg^{-\delta'},
\end{align}
then for all $t\ge 0$ we have
\begin{align}\label{d-1}
\Lg t+|x|\Rg |w(t,x)|\lesssim \frac{C_f}{1-2^{-\delta'}}+\sum_{|I|\le 5}\|\Lg |\cdot|\Rg \log \Lg \cdot\Rg\Ga^I w(0,\cdot) \|_2.
%\langle t+|x|\rangle|w(t,x)|\lesssim \sup_{0< s\le t}\|\langle s+|\cdot|\rangle^{1+\delta}\Gamma^{\le 4}f(s,\cdot)\|_{L^2} +\|{\langle |\cdot|\rangle^{\frac32+\delta }}\Gamma^{\le 4}w(0,\cdot)\|_{L^2}.
\end{align}
%Here $0<\delta\ll1$.

\end{lem}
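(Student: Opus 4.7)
The plan is to deduce this statement directly from Georgiev's inequality in Lemma~\ref{kgt32} by summing the Littlewood--Paley pieces on the right-hand side, using the assumed time decay of $f$ for the source part and a weighted Cauchy--Schwarz argument for the initial-data part.

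For the first piece, I would fix $j\ge 0$ and observe that on $\supp p_j$ one has $s\le 2$ if $j=0$ and $s\in[2^{j-1},2^{j+1}]$ if $j\ge 1$, so $\Lg s\Rg^{-\delta'}\lesssim 2^{-j\delta'}$. Combined with $0\le p_j\le 1$ and the hypothesis $\sum_{|I|\le 4}\|\Lg s+|\cdot|\Rg\Ga^I f(s,\cdot)\|_2\le C_f\Lg s\Rg^{-\delta'}$, this gives
\begin{align*}
\sum_{j\ge 0}\sup_{0<s\le t}p_j(s)\|\Lg s+|\cdot|\Rg\Ga^{\le 4}f(s,\cdot)\|_2\lesssim C_f\sum_{j\ge 0}2^{-j\delta'}=\frac{C_f}{1-2^{-\delta'}},
\end{align*}
which is exactly the first contribution on the right-hand side of \eqref{d-1}.

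For the second piece I would use Cauchy--Schwarz with a $(j+1)$-weight:
\begin{align*}
\sum_{j\ge 0}\|\Lg |\cdot|\Rg p_j(|\cdot|)\Ga^{\le 5}w(0,\cdot)\|_2
\le\Big(\sum_{j\ge 0}\tfrac{1}{(j+1)^2}\Big)^{1/2}\Big(\sum_{j\ge 0}(j+1)^2\|\Lg |\cdot|\Rg p_j(|\cdot|)\Ga^{\le 5}w(0,\cdot)\|_2^{2}\Big)^{1/2}.
\end{align*}
The first factor is a harmless finite constant. For the second factor, the key point is that on $\supp p_j(|x|)$ one has $\log\Lg x\Rg \gtrsim j$ (when $j\ge 1$, $|x|\sim 2^j$; when $j=0$ the weight is already bounded below by a constant), so $(j+1)\,p_j(|x|)\lesssim \log\Lg x\Rg\,p_j(|x|)$ pointwise. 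Finally, since $\{p_j\}$ is a Littlewood--Paley partition, the supports overlap only for adjacent indices and therefore $\sum_j p_j^2\lesssim 1$; this yields
\begin{align*}
\sum_{j\ge 0}(j+1)^2\|\Lg |\cdot|\Rg p_j(|\cdot|)\Ga^{\le 5}w(0,\cdot)\|_2^{2}\lesssim \|\Lg |\cdot|\Rg\log\Lg \cdot\Rg\Ga^{\le 5}w(0,\cdot)\|_2^{2}.
\end{align*}
Combining the two estimates with Lemma~\ref{kgt32} then gives \eqref{d-1}.

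The argument is essentially bookkeeping; the only mildly delicate point I expect to double-check is the inequality $(j+1)p_j(|x|)\lesssim \log\Lg x\Rg\,p_j(|x|)$ at the origin (i.e.\ for $j=0$, where $\log\Lg x\Rg$ can be as small as $\log 1=0$). This is handled by noting that $\log\Lg x\Rg$ should be understood with the convention $\log(1+\Lg x\Rg)$ that appears in the paper's initial-data norm, so that the weight is bounded below by a positive constant on $\supp p_0$. Once that minor issue is absorbed into the implicit constant, the geometric-series summation for the source term and the weighted Cauchy--Schwarz summation for the initial-data term close the proof cleanly.
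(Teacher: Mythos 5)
The paper does not itself prove this lemma; it is quoted from \cite{D,DM21}, so there is no in-paper argument to compare against. Your reconstruction from Lemma~\ref{kgt32} is correct and is essentially the forced route given the $\ell^1$-in-$j$ structure of Georgiev's bound: a geometric series for the source term, using $\Lg s\Rg^{-\delta'}\lesssim 2^{-j\delta'}$ on $\supp p_j$, and a $(j+1)$-weighted Cauchy--Schwarz in $j$ for the initial-data term, with $(j+1)$ absorbed into the logarithmic weight because $|x|\sim 2^j$ on $\supp p_j(|\cdot|)$ and the $p_j$ have finite overlap. The wrinkle you flag at $j=0$ is real: taken literally, $\log\Lg x\Rg$ vanishes at $x=0$, so the pointwise inequality $(j+1)\,p_j(|x|)\lesssim \log\Lg x\Rg\, p_j(|x|)$ fails on $\supp p_0$; your resolution, reading $\log\Lg\cdot\Rg$ as $\log(1+\Lg\cdot\Rg)$, is the correct one and is consistent with the convention the paper uses in its own initial-data norm \eqref{ID3} and in the ghost weight $\log(2+\cdot)$ of \eqref{q}.
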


\begin{lem}\label{lem2.7}
Suppose $u=u(t,x)$ is a smooth solution to $\Box u+u=F$. Then for $t>0$ we have 
\begin{align}\label{kgw}
\Big|\frac{\langle t+r\rangle}{\langle t-r\rangle}u\Big|
\lesssim&|\partial\Gamma^{\le 1}u|+\Big|\frac{\langle t+r\rangle}{\langle t-r\rangle}F\Big|.
\end{align}
\end{lem}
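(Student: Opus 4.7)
The plan is to revisit the algebraic computation used in the proof of Lemma~\ref{lem2.6} and combine it with the Klein-Gordon equation to isolate $u$ directly. Recall from that proof the purely algebraic identity
$$t^2\Delta u-r^2\partial_{tt}u \;=\; t\sum_i\partial_i\Omega_{i0}u-\sum_i x_i\partial_t\Omega_{i0}u-2t\partial_t u+r\partial_r u \;=:\;G,$$
which does not use the equation and satisfies $|G|\lesssim (t+r)|\partial\Gamma^{\le 1}u|$. Substituting $\partial_{tt}u-\Delta u=\Box u=F-u$ then yields the two equivalent identities
$$ (t^2-r^2)\partial_{tt}u+t^2 u=t^2 F+G \quad(\text{I}),\qquad (t^2-r^2)\Delta u+r^2 u=r^2 F+G \quad(\text{II}).$$
The special feature is that the mass term $u$ now appears with the large coefficient $t^2$ (resp.\ $r^2$), which is the algebraic manifestation of the mass gap of the Klein-Gordon operator.

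I would then split into two regions. In the region $r\le t$, identity (I) gives $t^2|u-F|\le |G|+|t^2-r^2||\partial_{tt}u|$. Multiplying through by $\langle t+r\rangle/(t^2\langle t-r\rangle)$ and using $|t^2-r^2|=|t-r|(t+r)\le\langle t-r\rangle(t+r)$ together with $(t+r)^2/t^2\lesssim 1$, the contribution of $G$ is already $\lesssim|\partial\Gamma^{\le 1}u|$. It remains to absorb the $\partial_{tt}u$-term: for this I would invoke Lemma~\ref{lem2.6}, $\langle t-r\rangle|\partial_{tt}u|\lesssim|\partial\Gamma^{\le 1}u|+\langle t+r\rangle|\Box u|$, and then use $|\Box u|=|F-u|\le|F|+|u|$ from the equation. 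In the symmetric region $r\ge t$ one uses (II) with $r^2$ in the role of $t^2$, and Lemma~\ref{lem2.6} applied to $|\Delta u|$.

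The main obstacle I expect is the potential circularity introduced by $|\Box u|\le |F|+|u|$: the $|u|$-term it produces on the right-hand side has exactly the same form, $(\langle t+r\rangle/\langle t-r\rangle)|u|$, as the left-hand side of the desired inequality. Closing the estimate therefore requires bookkeeping the implicit constant: the coefficient in front of this reappearing $|u|$-term must come out strictly less than one. The key is that in (I) the coefficient of $u$ is $t^2$, while the worst term on the right produces a coefficient of order $(t^2-r^2)/t^2=1-(r/t)^2$, which is strictly less than one for $r<t$ and tends to zero on the axis; a symmetric improvement $1-(t/r)^2$ comes from (II) in $r>t$. Tracking this single factor with care---rather than crudely bounding it by $1$---is what makes the absorption valid.

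Finally, in the transitional zone $r\sim t$ the factor $1-r^2/t^2$ is small but $\langle t-r\rangle$ is also small, and one checks that the two small quantities balance: on the light cone $t=r$ the identity $(\text{I})$ reduces to $t^2 u=t^2F+G$, which gives exactly $\langle t+r\rangle|u|\lesssim \langle t+r\rangle|F|+|\partial\Gamma^{\le 1}u|$, matching the claim since $\langle t-r\rangle\sim 1$ there. A short continuity/case split around $|t-r|\le 1$ extends this to a full neighbourhood of the cone and completes the proof.
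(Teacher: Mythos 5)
The algebraic skeleton of your argument---rearranging the identity from the proof of Lemma~\ref{lem2.6} into the forms~(I),~(II) to isolate $u$ with a large coefficient---is exactly the paper's approach (the paper uses form~(II), with $\Delta u$, in the region $t/2\le r\le 2t$, and the trivial bound $|u|\lesssim|\Box u|+|F|$ where $\langle t+r\rangle\sim\langle t-r\rangle$). Your bound on the $G$-contribution is also correct. The problem is the treatment of the $\partial_{tt}u$-term, and the gap is real, not cosmetic.

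Your own computation already finishes the proof: after multiplying identity~(I) by $\langle t+r\rangle/(t^2\langle t-r\rangle)$, the coefficient of $\partial_{tt}u$ is
\begin{equation*}
\frac{\langle t+r\rangle\,|t^2-r^2|}{t^2\,\langle t-r\rangle}
\;\le\; \frac{\langle t+r\rangle\,(t+r)}{t^2}\;\lesssim\;1
\qquad(r\le t),
\end{equation*}
using $|t^2-r^2|\le\langle t-r\rangle(t+r)$ exactly as you wrote. One then bounds $|\partial_{tt}u|\le|\partial^2u|\lesssim|\partial\Gamma^{\le1}u|$ directly (a second derivative is of the form $\partial(\partial u)$, hence is contained in $\partial\Gamma^{\le1}u$), and there is nothing left to absorb. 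This is precisely what the paper does with $\Delta u$.

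Instead you invoke Lemma~\ref{lem2.6} and then $|\Box u|\le|F|+|u|$, which creates a circularity that your bookkeeping does \emph{not} close. Tracking the weights, the coefficient of the reappearing $\frac{\langle t+r\rangle}{\langle t-r\rangle}|u|$ on the right is
\begin{equation*}
C_0\,\frac{|t-r|}{\langle t-r\rangle}\cdot\frac{\langle t+r\rangle(t+r)}{t^2},
\end{equation*}
where $C_0$ is the implicit constant in Lemma~\ref{lem2.6}. This is \emph{not} of order $1-(r/t)^2$: the smallness of $1-(r/t)^2$ near the light cone is consumed by the factor $\langle t+r\rangle/\langle t-r\rangle$ coming from Lemma~\ref{lem2.6} and the weight on the left, and away from the cone (say $r\le t/2$, where $|t-r|/\langle t-r\rangle\sim1$) the coefficient is of size comparable to $C_0$. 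Since $C_0$ is an uncontrolled absolute constant, it need not be less than $1$, and the absorption fails. The fix is simply to drop the Lemma~\ref{lem2.6} step: your coefficient of $\partial_{tt}u$ is already $O(1)$, so the crude bound $|\partial_{tt}u|\lesssim|\partial\Gamma^{\le1}u|$ suffices and the circularity never arises.
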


\begin{proof}
If $r< \frac t2$ or $r> 2t$, we have $\langle t+r\rangle \sim\langle t-r\rangle$. This implies that 
\[\Big|\frac{\langle t+r\rangle}{\langle t-r\rangle}u\Big|
\lesssim| u|\lesssim |\Box u|+|F|
\lesssim |\partial\Gamma^{\le 1}u| +|F|.\]
If $\frac t2\le r\le 2t$, we recall that \eqref{2.300} gives that
\begin{align}
(t^2-r^2)\Delta u =&r^2\Box u+r\partial_r u-2t\partial_t u-x_i\partial_t\Omega_{i0}u+t\partial_i\Omega_{i0}u
\\ =& -r^2  u+r^2F+r\partial_r u-2t\partial_t u-x_i\partial_t\Omega_{i0}u+t\partial_i\Omega_{i0}u\qquad (\text{by }\Box u+u=F)
\\ \implies u=&-\frac{t^2-r^2}{r^2}\Delta u+F+\frac1r\partial_r u-\frac {2t}{r^2}\partial_t u-\frac1{r^2}(x_i\partial_t\Omega_{i0}u-t\partial_i\Omega_{i0}u).
% \\ \implies \|\frac{\langle t+r\rangle}{\langle t-r\rangle}u\|_{2} &
% \lesssim\|\Delta u\|_{2}+\|\frac{\langle t+r\rangle}{\langle t-r\rangle}F\|_2+\|\partial\Gamma^{\le 1}u\|_{2}.
\end{align}
This implies \eqref{kgw} holds for $\frac t2\le r\le 2t$. 
\end{proof}
\begin{rem}\label{rem2.12}
  Note that one can iterate \eqref{kgw} as long as the nonlinear term $F$ is sufficiently nice. A typical example is $F=u^2$ then $F$ can easily absorb the weight $\frac{\Lg t+r\Rg}{\Lg t-r\Rg}$ and eventually we can derive the decay of $|u|\lesssim \Lg t+r\Rg^{-M}\Lg t-r\Rg^M$ for arbitrarily large $M$. Such property of $u$ can be considered as a machinery that one can gain any decay in $\Lg t+r\Rg$ with a trade-off of losing $\Lg t-r\Rg$ decay. 
\end{rem}

\begin{lem}\label{lem2.13}
    Suppose $u=u(t,x)$ is a smooth solution to $\Box u+u=F$. Then for $t\ge 2$ and $r\le 3t$ we have
\begin{align}\label{2.62}
    |u|\ls \frac{\Lg r-t\Rg }{ t}|\pa^2 u|+ \frac{1}{t}|\pa\Ga^{\le 1}u|+|F|.
\end{align}
\end{lem}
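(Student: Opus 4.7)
\smallskip

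\noindent\textbf{Proof plan for Lemma 2.13.} The statement closely resembles Lemma 2.12, and the natural approach is to mimic that proof: split into the interior region $r<t/2$ and the annular region $t/2\le r\le 3t$, then use the Klein--Gordon equation itself in the interior, and the algebraic identity coming from $(t^2-r^2)\Delta u$ in the annulus, to trade pointwise values of $u$ for second derivatives with an explicit $\langle r-t\rangle/t$ weight.

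\smallskip

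\noindent\emph{Interior case $r<t/2$.} Here $\langle r-t\rangle\sim t$, so $\langle r-t\rangle/t\gtrsim 1$. From the equation $u=F-\Box u$ and the trivial bound $|\Box u|\le |\partial_{tt}u|+|\Delta u|\lesssim |\partial^2 u|$, one gets
\begin{align*}
|u|\;\le\; |F|+|\Box u|\;\lesssim\; |F|+|\partial^2 u|\;\lesssim\; |F|+\frac{\langle r-t\rangle}{t}\,|\partial^2 u|,
\end{align*}
which already implies the desired inequality in this regime.

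\smallskip

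\noindent\emph{Annular case $t/2\le r\le 3t$.} The plan is to recycle the identity derived in the proof of Lemma 2.12, namely \eqref{2.300} combined with $\Box u=F-u$:
\begin{align*}
(t^2-r^2)\Delta u \;=\; -r^2 u + r^2 F + r\partial_r u -2t\partial_t u -x_i\partial_t\Omega_{i0}u + t\partial_i\Omega_{i0}u.
\end{align*}
Solving algebraically for $u$ and dividing by $r^2$ gives
\begin{align*}
u \;=\; -\frac{t^2-r^2}{r^2}\Delta u + F + \frac{1}{r}\partial_r u - \frac{2t}{r^2}\partial_t u - \frac{1}{r^2}\bigl(x_i\partial_t\Omega_{i0}u - t\partial_i\Omega_{i0}u\bigr).
\end{align*}
In the range $t/2\le r\le 3t$ we have $r\sim t$, so $(t+r)/r^2\lesssim 1/t$ and $|x_i|\le r\lesssim t$, giving the term-by-term bounds $|t^2-r^2|/r^2\lesssim \langle r-t\rangle/t$, $|r^{-1}\partial_r u|+|tr^{-2}\partial_t u|\lesssim t^{-1}|\partial u|$, and $r^{-2}(|x_i\partial_t\Omega_{i0}u|+|t\partial_i\Omega_{i0}u|)\lesssim t^{-1}|\partial\Gamma^{\le 1}u|$. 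Collecting these and using $|\Delta u|\lesssim |\partial^2 u|$ closes the estimate in this region.

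\smallskip

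\noindent\emph{Where the work lies.} The two-region split and the identity are routine once Lemma 2.12 is available; the only point requiring care is to verify that in the enlarged range $r\le 3t$ (as opposed to $r\le 2t$ in Lemma 2.12) one still has $r\sim t$ when $r\ge t/2$, so all the weights $(t+r)/r^2$, $1/r$, $t/r^2$ uniformly convert to $1/t$. No other obstacle arises, so the proof should fit in a few lines once the identity is written down.
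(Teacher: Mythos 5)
Your proof is correct. It differs from the paper's only in which term of the identity \eqref{2.300} you solve for: the paper substitutes $\Delta u=\pa_{tt}u-\Box u$ into \eqref{2.300}, so that the $\Box$-terms combine into $t^2\Box u=t^2F-t^2u$ and one can isolate $t^2u$; since the coefficient $t^2$ is never degenerate, the bound $|u|\ls \frac{\Lg r-t\Rg}{t}|\pa_{tt}u|+\frac1t|\pa\Ga^{\le 1}u|+|F|$ then follows in one stroke on the whole range $r\le 3t$, with no case split. You instead keep $\Delta u$, substitute $\Box u=F-u$ into the $r^2\Box u$ term, and isolate $r^2u$, which forces you to restrict to $r\gtrsim t$ and to handle the interior region $r<t/2$ separately via the trivial bound $|u|\le|F|+|\Box u|$ together with $\Lg r-t\Rg/t\gtrsim 1$ there (this is the same mechanism used in the proof of Lemma~\ref{lem2.7}). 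Both arguments are complete and rest on the same algebraic identity; the paper's choice merely buys a shorter, split-free proof, while yours makes the parallel with Lemma~\ref{lem2.7} explicit. Your weight bookkeeping in the annulus ($|t^2-r^2|/r^2\ls \Lg r-t\Rg/t$, $1/r$, $t/r^2$, $|x_i|/r^2\ls 1/t$ for $t/2\le r\le 3t$) is accurate.
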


\begin{proof}
    The proof is very similar to Lemma~\ref{lem2.6}. Recall \eqref{2.300} that
    \begin{align}
        &(t^2-r^2)\Delta u =r^2\Box u+r\partial_r u-2t\partial_t u-x_i\partial_t\Omega_{i0}u+t\partial_i\Omega_{i0}u\\ \implies &(t^2-r^2)\pa_{tt} u+(r^2-t^2)\Box u=r^2\Box u+r\partial_r u-2t\partial_t u-x_i\partial_t\Omega_{i0}u+t\partial_i\Omega_{i0}u \\ \implies & t^2 u= (r^2-t^2)\pa_{tt} u+t^2F+r^2\Box u+r\partial_r u-2t\partial_t u-x_i\partial_t\Omega_{i0}u+t\partial_i\Omega_{i0}u .
    \end{align}
We thus conclude \eqref{2.62} from here.
    
\end{proof}

\section{Estimates of the energy}\label{sec:energy}
To prove Theorem~\ref{thm},
we first make an {\it a priori} hypothesis: for any $K \ge 12$ we assume
\begin{align}
&|\pa\Gamma^{\le K-5} n_0|+|\pa^2\Ga^{\le K-5}\td{n}|\le C_1\varepsilon{\langle t\rangle^{-\frac12}}\langle r-t\rangle^{-\frac{1+\de}{2}} ,\label{A2}
\\&|\Gamma^{\le K-5} \phi|\le C_2\varepsilon \langle t+r\rangle^{-1},\label{A3}
\end{align}
where the positive constants $C_1,\ C_2$ will be chosen later and $\de>0$ is a small fixed parameter as in Theorem~\ref{thm}. 

\begin{rem}
    It is worth mentioning that here our {\it a priori} assumption on wave field $n$ is only of the form $\Lg t\Rg^{-\frac12}\Lg r-t\Rg^{-\frac{1+\de}{2}}$ yet in fact we will show such decay can be the usual $\Lg t+r\Rg^{-\frac12}\Lg r-t\Rg^{-\frac{1+\de}{2}}$ to close the bootstrap scheme (see Section~\ref{sec:close}).
\end{rem}
As mentioned earlier, we consider the initial conditions on $\phi$ and $n$: 
\begin{align}\label{3.333}
\begin{cases}
(\phi,\partial_t \phi)|_{t=0}=(\Phi_0,\Phi_1),\\
(n,\partial_t n) |_{t=0}=( N_0,N_1).
\end{cases}
\end{align}
Recall that we separate $n=n_0+A^\al \pa_\al\td{n}$, where
\begin{align}\label{n0}
\begin{cases}
    &\Box n_0=0,\\
    &(n_0,\pa_t n_0)|_{t=0}=(N_0,N_1),
\end{cases}
\end{align}
and
\begin{align}\label{tdn}
    \begin{cases}
    & \Box \td{n}=|\phi|^2,\\
        &(\td{n},\pa_t\td{n})|_{t=0}=(0,0).
    \end{cases}
\end{align}
To begin with, we first show the top-order energy is ``almost'' bounded that is it grows polynomially as $\Lg t\Rg^{\delta_1}$ where the power $\delta_1$ can be made arbitrarily small.
\begin{lem}[Polynomial growth of the top-order energy]\label{lem3.1}
Let $(\phi,n)$ be a couple of solutions to \eqref{eq:kg}. Assume \eqref{A2} and \eqref{A3} hold. %Let \cmtr{$K\ge k+5$}.
Then we have for some constant $C>0$,
\begin{align}\label{3.32}
\| \partial \Gamma^{\le K}\phi\|_{2}+\| \Gamma^{\le K}\phi\|_{2}+\|\pa\Gamma^{\le K}n\|_{2}\le C\varepsilon t^{\de_1}
\end{align}
where $\de_1$ satisfying $0<\de_1<\frac{\de}{4}$ is an arbitrarily small constant. In particular, we have
\begin{align}\label{3.44e}
        \|\pa \Ga^{\le K} \td{n}\|_2 \le C\varepsilon^2 \Lg t\Rg^{\de_1}.
    \end{align}
%and 
%\begin{align}\label{3.34}
%\|\pa\Gamma^{\le k+4} \td{n}\|_{2} \le C\varepsilon.
%\end{align}

\end{lem}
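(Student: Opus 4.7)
The plan is to run a weighted energy estimate in the spirit of Alinhac's ghost weight method. Pick $p(r,t)=q(r-t)$ with $q$ bounded and $q'(s)\sim\langle s\rangle^{-1-\delta}$, so that $e^{p}$ is comparable to $1$ while the ``ghost'' term $\int e^{p}q'|\cdot|^{2}\,dx$ picks up an extra $\langle r-t\rangle^{-1-\delta}$ weight. Working with the decomposition $n=n_{0}+A^{\beta}\partial_{\beta}\tilde n$, I would track the weighted energy
\begin{equation*}
\mathcal{E}_{K}(t)=\sum_{|\alpha|\le K}\bigl(\|e^{p/2}\partial\Gamma^{\alpha}\phi\|_{2}^{2}+\|e^{p/2}\Gamma^{\alpha}\phi\|_{2}^{2}+\|e^{p/2}\partial\Gamma^{\alpha}\tilde n\|_{2}^{2}\bigr);
\end{equation*}
the free piece $\|\partial\Gamma^{\le K}n_{0}\|_{2}$ is controlled by conservation for $\Box n_{0}=0$ and the initial data assumption \eqref{ID3}. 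Commuting $\Gamma^{\alpha}$ through $\Box+1$ and $\Box$ via $[\Box+1,\Gamma^{\alpha}]=[\Box,\Gamma^{\alpha}]=0$, then pairing $L^{2}$ with $e^{p}\partial_{t}\Gamma^{\alpha}\phi$ and $e^{p}\partial_{t}\Gamma^{\alpha}\tilde n$, integration by parts produces
\begin{equation*}
\tfrac{d}{dt}\mathcal{E}_{K}+\!\!\sum_{|\alpha|\le K}\!\!\int e^{p}q'\bigl(|T\Gamma^{\alpha}\phi|^{2}+|\Gamma^{\alpha}\phi|^{2}+|T\Gamma^{\alpha}\tilde n|^{2}\bigr)\,dx=\mathcal{R}_{K}^{\phi}+\mathcal{R}_{K}^{\tilde n},
\end{equation*}
where $\mathcal R_{K}^{\phi}$ gathers $\int e^{p}\Gamma^{\alpha}(B^{\beta}\phi\,\partial_{\beta}n)\,\partial_{t}\Gamma^{\alpha}\phi$ and $\mathcal R_{K}^{\tilde n}$ gathers $\int e^{p}\Gamma^{\alpha}(|\phi|^{2})\,\partial_{t}\Gamma^{\alpha}\tilde n$.

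The core of the proof is to dominate both remainders by $C\varepsilon^{2}\langle t\rangle^{-1}\mathcal{E}_{K}$ plus harmless lower-order pieces. For $\mathcal R_{K}^{\phi}$ I would split the Leibniz expansion into (i) the case where all of $\Gamma^{\alpha}$ lands on $\phi$, producing the offending term $\int e^{p}(\Gamma^{\alpha}\phi)\,\partial_{\beta}n\,\partial_{t}\Gamma^{\alpha}\phi\,dx$, and (ii) the case where $\Gamma^{\alpha}$ lands on $n$. For (i) I use Cauchy-Schwarz weighted by $q'$ to absorb the first factor into the ghost term:
\begin{equation*}
\Bigl|\!\int e^{p}(\Gamma^{\alpha}\phi)\,\partial_{\beta}n\,\partial_{t}\Gamma^{\alpha}\phi\,dx\Bigr|\le \tfrac14\!\int\! e^{p}q'|\Gamma^{\alpha}\phi|^{2}+C\!\int\! e^{p}|\partial\Gamma^{\alpha}\phi|^{2}\tfrac{|\partial n|^{2}}{q'}\,dx,
\end{equation*}
and the crucial gain is that the hypothesis \eqref{A2} gives $|\partial n|^{2}/q'\lesssim C_{1}^{2}\varepsilon^{2}\langle t\rangle^{-1}\langle r-t\rangle^{-1-\delta}\cdot\langle r-t\rangle^{1+\delta}=C_{1}^{2}\varepsilon^{2}\langle t\rangle^{-1}$, which is integrable after logarithm. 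For (ii) I use $\|\phi\|_{\infty}\lesssim C_{2}\varepsilon\langle t\rangle^{-1}$ from \eqref{A3} together with the commutator lemma to bound the piece by $\varepsilon\langle t\rangle^{-1}\|\partial\Gamma^{\le K}n\|_{2}\|\partial_{t}\Gamma^{\alpha}\phi\|_{2}$, then reduce $\|\partial\Gamma^{\le K}n\|_{2}$ to $\|\partial\Gamma^{\le K}n_{0}\|_{2}+\|\partial\Gamma^{\le K}\partial_{\beta}\tilde n\|_{2}$ using the decomposition of $n$; here I anticipate running the bootstrap with one extra order on $\tilde n$ to accommodate the extra derivative from $\partial_{\beta}\tilde n$. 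The mid-order pieces with both factors of order $\le K-5$ are controlled pointwise by \eqref{A2}--\eqref{A3} without trouble. For $\mathcal R_{K}^{\tilde n}$, the Leibniz expansion $\Gamma^{\alpha}|\phi|^{2}=\sum\Gamma^{\alpha_{1}}\phi\cdot\Gamma^{\alpha_{2}}\phi$ with the lower-index factor placed in $L^{\infty}$ (giving $\varepsilon\langle t\rangle^{-1}$ by \eqref{A3}) yields $|\mathcal R_{K}^{\tilde n}|\lesssim\varepsilon\langle t\rangle^{-1}\|\Gamma^{\le K}\phi\|_{2}\|\partial\Gamma^{\le K}\tilde n\|_{2}\lesssim\varepsilon^{2}\langle t\rangle^{-1}\mathcal{E}_{K}$.

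Assembling everything, I would obtain $\tfrac{d}{dt}\mathcal{E}_{K}(t)\le C\varepsilon^{2}\langle t\rangle^{-1}\mathcal{E}_{K}(t)$, and Gronwall delivers $\mathcal{E}_{K}(t)\le \mathcal{E}_{K}(0)\cdot\langle t\rangle^{C\varepsilon^{2}}$. Choosing $\varepsilon_{0}$ small enough that $C\varepsilon_{0}^{2}\le 2\delta_{1}$, and noting that $p$ is bounded so that $\mathcal{E}_{K}$ is equivalent to the unweighted squared energy, this yields \eqref{3.32}. The statement \eqref{3.44e} is the specialization to the $\tilde n$ summand, which starts at zero by \eqref{tdn} and is fed only by the $O(\varepsilon^{2})$ source $|\phi|^{2}$, explaining the $\varepsilon^{2}$ gain. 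The main obstacle is step (i) above: naively, $\partial n$ has only $\langle t\rangle^{-1/2}$ time decay and placing it in $L^{\infty}$ would produce a fatal $\sqrt{t}$ growth; the ghost weight is indispensable precisely to trade the unfavorable $\langle r-t\rangle^{-(1+\delta)/2}$ spatial decay of $\partial n$ for the integrable-in-log-time $\langle t\rangle^{-1}$ gain after Cauchy-Schwarz.
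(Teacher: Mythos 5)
Your proposal follows essentially the same route as the paper: decompose $n=n_0+A^\alpha\partial_\alpha\tilde n$, conserve the free piece $n_0$, put Alinhac's ghost weight on the Klein--Gordon equation, absorb the high-index $\Gamma^\alpha\phi$ factor into the ghost term via Cauchy--Schwarz, use the a priori bound $|\partial n|\lesssim\varepsilon\langle t\rangle^{-1/2}\langle r-t\rangle^{-(1+\delta)/2}$ to trade the $\langle r-t\rangle$ weight supplied by $1/q'$ for the integrable $\langle t\rangle^{-1}$ decay, and close by Gronwall. (The paper's $q'$ is $\langle s\rangle^{-1}(\log(2+s^2))^{-2}$ rather than your $\langle s\rangle^{-1-\delta}$, but both are bounded and both make $|\partial n|^2/q'\lesssim\varepsilon^2\langle t\rangle^{-1}$.) Two points in your sketch need to be tightened before it actually delivers the claimed statement.

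First, the quantity you need is $\|\partial\Gamma^{\le K}n\|_2\lesssim\|\partial\Gamma^{\le K}n_0\|_2+\|\partial^2\Gamma^{\le K}\tilde n\|_2$, i.e.\ one needs two derivatives on $\tilde n$. Your $\mathcal E_K$ only tracks $\|\partial\Gamma^\alpha\tilde n\|_2$. You acknowledge ``one extra order'', but the fix should be concrete: apply the energy identity to $\Box\,\partial\Gamma^I\tilde n=\sum \tilde C_{I;I_1,I_2}\,\partial\Gamma^{I_1}\phi\,\Gamma^{I_2}\phi$ (testing with $\partial_t\partial\Gamma^I\tilde n$), which is exactly what the paper does; then the source estimate is $\lesssim\varepsilon t^{-1}(\|\partial\Gamma^{\le K}\phi\|_2+\|\Gamma^{\le K}\phi\|_2)\|\partial^2\Gamma^I\tilde n\|_2$ and the combined Gronwall closes for $\|\partial\Gamma^{\le K}\phi\|_2+\|\Gamma^{\le K}\phi\|_2+\|\partial^2\Gamma^{\le K}\tilde n\|_2$.

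Second, your argument for \eqref{3.44e} does not actually produce the $\varepsilon^2$ coefficient. A single Gronwall bound on the unified $\mathcal E_K$ gives $\mathcal E_K(t)\lesssim\mathcal E_K(0)\langle t\rangle^{C\varepsilon^2}$, and since $\mathcal E_K(0)$ contains the $O(\varepsilon^2)$ $\phi$-energy, this only yields $\|\partial\Gamma^{\le K}\tilde n\|_2\lesssim\varepsilon\langle t\rangle^{\delta_1}$. To obtain the $\varepsilon^2$ gain one must decouple: first establish $\|\Gamma^{\le K}\phi\|_2\lesssim\varepsilon t^{\delta_1}$, then run a separate energy estimate on $\Box\Gamma^J\tilde n=\sum C\,\Gamma^{J_1}\phi\,\Gamma^{J_2}\phi$ so that the source is $\lesssim\varepsilon^2\langle t\rangle^{-1+\delta_1}$, and combine with the fact that the initial data $(\Gamma^J\tilde n,\partial_t\Gamma^J\tilde n)|_{t=0}$ are $O(\varepsilon^2)$. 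This last point is nontrivial and is \emph{not} the same as ``starts at zero by \eqref{tdn}'': for multi-indices involving $\Omega_{i0}$ or $\partial_t$, the data $\Gamma^J\tilde n|_{t=0}$ are nonzero because time derivatives of $\tilde n$ at $t=0$ are determined through the equation by $|\phi|^2$; they are $O(\varepsilon^2)$ only because the source is quadratic (this is the content of Remark~\ref{rem3.3} together with Proposition~\ref{prop} in the appendix). As written, your ``starts at zero'' sidesteps an initial-data computation that the paper carries out explicitly, and without it the $\varepsilon^2$ in \eqref{3.44e} is not justified.
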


\begin{proof} To begin with we first consider the wave component $\|\pa\Ga^{\le K}n\|_2$.

\texttt{Step 1, the wave component $n$.}

Note that since $n=n_0+A^\al\pa_\al \td{n}$, we have for any multi-index $I$,
\begin{align}\label{3.31}
|\Ga^I n|\lesssim |\Ga^In_0|+|\Gamma^{I}\pa \td{n}| \lesssim |\Ga^I n_0|+|\partial\Gamma^{\le |I|} \td{n}|.
\end{align}
As a result for $|I|\le K$, 
\begin{align}
    \|\pa\Ga^{\le K}n\|_2\lesssim \|\pa \Ga^{\le K}n_0\|_2+ \|\pa^2\Ga^{\le K}\td{n}\|_2.
\end{align}
Note that by a simple energy estimate \begin{align}\label{3.9}
\|\pa \Ga^{\le K}n_0\|_2(t)=\|\pa \Ga^{\le K}n_0\|_2(0)\le C_i\varepsilon,
\end{align}where $C_i>0$ is some fixed constant resulting from the initial condition. Therefore it suffices to estimate $\| \pa^2\Ga^{\le K}\td{n}\|_2$. Let $|I|=m$ and $m\le K$ is a running index, we have from \eqref{tdn} and \eqref{n0} that
\begin{align}\label{3.17}
\Box \Gamma^I \td{n}=\sum_{I_1+I_2=I }C_{I;I_1,I_2}\Gamma^{I_1}\phi\Gamma^{I_2}\phi.
\end{align}
%Multiplying both sides of \eqref{3.17} by $\partial_t\Gamma^I\td{n}$, we obtain
%\begin{align}
%\frac{d}{dt}\|\partial\Gamma^I \td{n}\|_{2}^2
%\lesssim& \int|\Gamma^{I_1}\phi\Gamma^{I_2}\phi \partial_t\Gamma^I \td{n}|
% \\ \lesssim &(\|\Gamma^{\le [\frac{K-1}2]}\phi\|_{\infty}\|\Gamma^{\le K}\phi\|_{2}
%              +\|\Gamma^{\le K}\phi\|_{2}\|\Gamma^{\le [\frac{K}2]}\phi\|_{\infty})
%              \| \partial_t\Gamma^I \td{n}\|_{2}
% \\ \lesssim &  \|\Gamma^{\le [\frac{K-1}2]+1}\phi\|_{\infty} (\|\nabla\Gamma^{\le K}\phi\|_{2}+\|\Gamma^{\le K}\phi\|_{2})  \| \partial_t\Gamma^I \td{n}\|_{2}         
% \\ \le & \ C\varepsilon t^{-1}(\|\nabla\Gamma^{\le K}\phi\|_{2}+\|\Gamma^{\le K}\phi\|_{2})\| \partial_t\Gamma^I \td{n}\|_{2} \qquad\qquad (\text{by \eqref{A3}}).
%\end{align}
%Note that here we need \cmt{$[\frac{K-1}{2}]+1\le k\le K-4 \implies K\ge 8$}. 
Moreover, we have 
\begin{align}\label{3.17a}
\Box \partial\Gamma^I \td{n}=\sum_{I_1+I_2=I }\td{C}_{I;I_1,I_2}\pa \Gamma^{I_1}\phi\Gamma^{I_2}\phi.
\end{align}
Multiplying both sides of \eqref{3.17a} by $\partial_t\pa \Gamma^I\td{n}$, we obtain
\begin{equation}\label{3.13}
\begin{aligned}
\frac{d}{dt}\|\partial^2\Gamma^I \td{n}\|_{2}^2
\lesssim& \int|\pa \Gamma^{I_1}\phi\Gamma^{I_2}\phi \partial_t\pa\Gamma^I \td{n}|
 \\ \lesssim &(\|\pa\Gamma^{\le [\frac{K-1}2]}\phi\|_{\infty}\|\Gamma^{\le K}\phi\|_{2}+\|\pa \Gamma^{\le K}\phi\|_{2}\|\Gamma^{\le[\frac{K}2]}\phi\|_{\infty}) \| \partial_t\pa\Gamma^I \td{n}\|_{2}
 \\ \lesssim &  \|\Gamma^{\le [\frac{K-1}2]+1}\phi\|_{\infty} (\|\pa\Gamma^{\le K}\phi\|_{2}+\|\Gamma^{\le K}\phi\|_{2})  \| \partial_t\pa\Gamma^I \td{n}\|_{2}         
 \\ \le & \ C\varepsilon t^{-1}(\|\pa\Gamma^{\le K}\phi\|_{2}+\|\Gamma^{\le K}\phi\|_{2})\| \partial_t\pa\Gamma^I \td{n}\|_{2} \qquad\qquad (\text{by \eqref{A3}}).
\end{aligned}
\end{equation}
Note that here we indeed have {$[\frac{K-1}{2}]+1\le K-5$ since $K \ge 12$}. This implies that 
\begin{align}\label{3.27}
\frac{d}{dt}\|\partial^2\Gamma^I \td{n}\|_{2}^2
\le  \ C\varepsilon t^{-1}(\|\pa\Gamma^{\le K}\phi\|_{2}^2+\|\Gamma^{\le K}\phi\|_{2}^2+\| \partial^2\Gamma^{\le K}\td{n}\|_{2}^2).
 \end{align}

\texttt{Step 2, we estimate the Klein-Gordon component $\|\Ga^{\le K}\phi\|_2, \|\pa\Ga^{\le K}\phi\|_2$.}

Let $I$ be a multi-index and $|I|\le K$. 
Write $\Phi= \Gamma^{I} \phi$. We have
\begin{align}\label{3.0A}
\square \Phi+\Phi=\sum_{I_1+I_2=I}C^\al_{I; I_1,I_2}\pa_\al \Gamma^{I_1}n \Gamma^{I_2}\phi.
\end{align}
Let $p(t,r) = q(r-t)$, where
\begin{align}\label{q}
q(s) = \int_{-\infty}^s \langle \tau\rangle^{-1}  \bigl(\log ( 2+\tau^2) \bigr)^{-2} d\tau.
\end{align}
Clearly
\begin{align}
-\partial_t p = \partial_r p = q^{\prime}(r-t)
= \langle r -t \rangle^{-1} \bigl( \log (2+(r-t)^2)  \bigr)^{-2}.
\end{align}
Multiplying both sides of \eqref{3.0A} by $e^p \partial_t \Phi$, we obtain
\begin{align}
   \text{LHS}&=\frac12\frac{d}{dt}(\| e^\frac{p}{2}\partial \Phi\|_{2}^2+\| e^\frac{p}{2} \Phi\|_{2}^2) +\frac 12
   \int e^p q^{\prime} (|T \Phi|^2+|\Phi|^2),
\end{align}
where $|\partial \Phi|^2=\sum_{i=0}^2|\pa_i \Phi|^2$ and $|T\Phi|^2=\sum_{i=1}^2|T_i \Phi|^2$. We split the RHS into two cases:
\begin{align}
\text{RHS} &= \sum_{I_1+I_2=I}C^\al_{I; I_1,I_2} \int e^p\pa_\al\Gamma^{I_1}n \Gamma^{I_2}\phi \partial_t \Phi.
\end{align}

\texttt{Case 1:} $|I_1|<|I_2|\le |I|$.
\begin{align}
\text{RHS}\le &\frac{1}{10}\int e^p q' |\Gamma^{I_2}\phi|^2+C\int \frac{e^p}{q'}|\pa \Gamma^{\le [\frac{|I|}2]}n|^2|\partial_t \Phi|^2\\
\le& \frac{1}{10}\int e^p q' |\Gamma^{I_2}\phi|^2+C\|\langle r-t\rangle^{\frac{1+\delta}2}\pa \Gamma^{\le [\frac{|I|}2]}n\|_{\infty}^2\|\partial_t\Phi\|_{2}^2
\\\le& \frac{1}{10} \int e^p q' |\Gamma^{I_2}\phi|^2+(C\varepsilon)^2t^{-1}\|\partial_t\Phi\|_{2}^2\qquad (\text{by \eqref{A2}}).
\end{align}
Here the positive constant $C$ may vary from line to line. We also notice that after summing the index $I$ the term $\frac{1}{10} \int e^p q' |\Gamma^{I_2}\phi|^2$ can be absorbed by the LHS (by choosing $K\ge 12$ we indeed have % {\color{red} (Note we need $[\frac{|\alpha|}2]\le[\frac{K}{2}]\le k\le K-4\implies K\ge 7$ when we use \eqref{A2}.)}
{$[\frac{K}{2}]+1\le K-5$}).

\texttt{Case 2:} $|I|\ge|I_1|\ge |I_2|$.
\begin{align}
\text{RHS}\le &C\| \pa\Gamma^{\le|I|}n\|_{2}\|\Gamma^{\le [\frac{|I|}2]}\Phi\|_{\infty}\|\partial_t\Phi\|_{2}
\\ \le &C\varepsilon t^{-1}(\| \pa\Gamma^{\le|I|}\pa \td{n}\|_{2}+\|\pa \Ga^{\le |I|}n_0\|_2)\|\partial_t\Phi\|_{2}\qquad (\text{by \eqref{A3} and $n=n_0+A^\al \partial_\al \td{n}$})
\\ \le &C\varepsilon t^{-1}\| \partial^2\Gamma^{\le |I|} \td{n}\|_{2}\|\partial_t\Phi\|_{2}+C\varepsilon^2t^{-1}\|\pa_t\Phi\|_2.
\end{align}
% {\color{red}Here we need $\|\Gamma^{\le K}n^0\|_{L^2}\lesssim \sum_{j=0}^{K}\|\langle x\rangle^{j}\nabla^j \nabla n_0\|_{L^2}+\sum_{j=0}^{K-1}\|\langle x\rangle^{j+1}\nabla^j n_1\|_{L^2}\lesssim \varepsilon$.}
Then we arrive at 
\begin{equation}\label{3.29}
\begin{split}
&\frac{d}{dt}(\| \partial \Phi\|_{2}^2+\| \Phi\|_{2}^2) +\frac 12
   \int e^p q^{\prime} (|T \Phi|^2+|\Phi|^2)\\\le&\  (C\varepsilon)^2t^{-1}(\| \partial\Phi\|_{2}^2+\|\pa \Phi\|_2)+C\varepsilon t^{-1}\| \partial^2 \Gamma^{\le |I|} \td{n}\|_{2}\|\partial_t\Phi\|_{2}.
\end{split}
\end{equation}
Combining \eqref{3.27} and \eqref{3.29}, it follows that 
\begin{align}
&\frac{d}{dt}(\| \partial \Gamma^{\le K}\phi\|_{2}^2+\| \Gamma^{\le K}\phi\|_{2}^2+\|\partial^2\Gamma^{\le K} \td{n}\|_{2}^2)
\\\le&\  (C\varepsilon)^2t^{-1}(\| \partial \Phi\|_{2}^2+\|\pa \Phi\|_2)+ \ C\varepsilon t^{-1}(\|\pa\Gamma^{\le K}\phi\|_{2}^2+\|\Gamma^{\le K}\phi\|_{2}^2+\| \partial^2\Gamma^{\le K} \td{n}\|_{2}^2).
\end{align}
This immediately implies that 
 % Thus it follows from \eqref{3.21} and \eqref{3.13} that
\begin{align}
\| \partial \Gamma^{\le K}\phi\|_{2}+\| \Gamma^{\le K}\phi\|_{2}+\|\partial^2\Gamma^{\le K} \td{n}\|_{2}
\le C\varepsilon t^{\de_1}.
\end{align}
For the term $\|\pa\Ga^{\le K} \td{n}\|_2$:
let $J$ be a multi-index and $|J|=m\le K$. Then we have
\begin{align}\label{3.35s}
\Box \Gamma^J \td{n}=\sum_{J_1+J_2= J}C_{J;J_1,J_2}\Gamma^{J_1}\phi\Gamma^{J_2}\phi.
\end{align}
 Multiplying both sides of \eqref{3.35s} by $\partial_t\Gamma^J \td{n}$, we obtain
 \begin{equation}\label{3.30}
\begin{aligned}
\frac{d}{dt}\|\partial\Gamma^J \td{n}\|_{2}^2
\lesssim& \sum_{J_1+J_2= J}\int|\Gamma^{J_1}\phi\Gamma^{J_2}\phi \partial_t\Gamma^J \td{n}|
 \\ \lesssim &\|\Gamma^{\le [\frac{K}2]}\phi\|_{\infty}\|\Gamma^{\le K}\phi\|_{2}
              \| \partial_t\Gamma^J \td{n}\|_{2}
\\ \le& C\varepsilon^2 \Lg t\Rg^{-1+\de_1}\|\pa\Ga^J \td{n}\|_2 & (\text{by \eqref{A3} and \eqref{3.32}}),
\end{aligned} 
\end{equation}
which shows %(\cmtr{need $k_2+1\le K$ and $[\frac{k_2+1}{2}]\le k $ }  )
\begin{align}\label{3.44d}
 \|\pa \Ga^{\le K}\td{n}\|_2\le C\varepsilon\Lg t\Rg^{\de_1}  
\end{align}
by taking the initial data estimate into consideration. Here we need the following smallness assumption (also see \eqref{B1},\eqref{B2} and \eqref{B1a}):
\begin{align}\label{3.35a}
    \| (\partial \Gamma^{\le K}\phi)|_{t=0}\|_{2}+
\| (\Gamma^{\le K}\phi)|_{t=0}\|_{2}+\|(\pa \Ga^{\le K}n_0)|_{t=0} \|_2+  \|(\partial^2\Gamma^{\le K} \td{n})|_{t=0}\|_{2}\le C_i\varepsilon,    
\end{align}
where the positive constant $C_i$ is clear from the initial condition {(cf. \eqref{B1}-\eqref{B4})}. Thus \eqref{3.32} holds.

\end{proof}

\begin{rem}\label{rem3.3}
 In fact from the assumptions of the initial data we indeed have 
 \begin{align}\label{3.32b}
      \|\pa\Ga^{\le K}\td{n}\|_2+\|\pa^2\Ga^{\le K}\td{n}\|_2\le C\varepsilon^2 t^{\de_1}.
 \end{align}  
 To see this, it suffices to show for $|\al|\le K+1$,
 \begin{align}
     \|(\pa \Ga^{\al}\td{n})|_{t=0}\|_2\le C\varepsilon^2.
 \end{align}
Since interchanging the vector fields merely generates lower-order terms, we estimate the initial data following similar arguments in \cite{CX22}: 
\begin{align}
\|(\partial\Gamma^{\alpha}\td{n})|_{t=0}\|_{2}
\lesssim\|(\Gamma_{\Omega}^{\alpha_1}\partial_{t}^{\alpha_2}\nabla^{\alpha_3}\td{n})|_{t=0}\|_{2}
\lesssim \sum_{\alpha_1+\alpha_2+\alpha_3\le K+1}\sum_{J=0}^{\alpha_1}\|\langle x\rangle^{J}(\partial^J\partial_{t}^{\alpha_2}\nabla^{\alpha_3}\td{n})|_{t=0}\|_2,
\end{align}
where $\Gamma_{\Omega}=\Omega_{i0},\Omega_{ij}$. Recalling the equation of $\td{n}$ in \eqref{tdn}, we have for any $b\ge2$ and $a\ge0$
\[\partial_t^b\nabla^a \td{n}=\partial_t^{b-2}\nabla^a \Delta \td{n}+\partial_t^{b-2}\nabla^a(|\phi|^2).
\]
Since $(\td{n},\partial_t \td{n})|_{t=0}=\mathbf 0$, one has (by Proposition \ref{prop})
{\small\begin{align}
\sum_{J=0}^{\alpha_1}\|\langle x\rangle^{J}(\partial^J\partial_{t}^{\alpha_2}\nabla^{\alpha_3}\td{n})|_{t=0}\|_2
\lesssim \|\langle x\rangle^{[\frac{K+1}2]+1}\langle \partial \rangle^{ K-1}\phi|_{t=0}\|_2\|\langle x\rangle^{[\frac{K+1}2]}\langle \partial \rangle^{ [\frac{K+1}{2}]}\phi|_{t=0}\|_\infty\le C\varepsilon^2.\label{3.44}
\end{align}}
The cases $b=0,1$ follow immediately. {Hence \eqref{3.34} holds combining the estimates \eqref{3.13}, \eqref{3.30} and \eqref{3.44}. In other words even though the initial data $(\Ga^\al \td{n},\pa_t \Ga^{\al }\td{n})|_{t=0}$ are not zero, it is of the order $O(\varepsilon^2)$ thanks to the nonlinear structure. Therefore one can treat $\td{n}$ as a $O(\varepsilon^2)$ order perturbation of $n_0$. The initial condition of the Klein-Gordon component can be handled similarly and we postpone the analysis in Proposition~\ref{prop}.}
\end{rem}
From now on we fix the choice $\de_1>0$ as in Lemma~\ref{lem3.1}.

\begin{lem}[Polynomial growth of an almost top-order weighted energy of the wave component]\label{lem3.7}
     Let $(\phi,n)$ be a couple of solutions to \eqref{eq:kg}. Assume \eqref{A2} and \eqref{A3} hold. With no loss we assume $t\ge 2$. Then there exists some constant $C>0$ such that
\begin{align}\label{3.59a}
    \|\Lg t-|\cdot|\Rg \pa \Ga^{\le K-1} n \|_2\le (C_i\varepsilon +C\varepsilon^2) \Lg t\Rg^{\de_1}.
\end{align}
\end{lem}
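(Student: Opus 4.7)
My plan is to use the decomposition $n = n_0 + A^\al \pa_\al \td{n}$ introduced in Remark~\ref{rem3.3} and estimate the linear and nonlinear contributions to $\pa\Ga^{\le K-1}n$ separately. By the commutator identity $\pa\Ga^\al = \Ga^\al \pa + \sum_{|\beta|<|\al|} \pa \Ga^\beta$, we may reduce inductively to bounding $\|\langle t-r\rangle \Ga^{\le K-1}\pa n_0\|_2$ for the linear part, and $\|\langle t-r\rangle \pa^2\Ga^{\le K-1}\td n\|_2$ for the nonlinear part (after commuting the extra $\pa_\al$ through). The lower order commutator terms $\pa\Ga^{<|\al|}n$ are of the same form and absorbed via induction on $|\al|$.

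For the $\td n$ contribution I invoke Lemma~\ref{lem2.6} applied to each $\Ga^I \td n$ with $|I|\le K-1$, which yields
\[
\langle t-r\rangle|\pa^2\Ga^I\td n| \lesssim |\pa\Ga^{\le|I|+1}\td n| + \langle t+r\rangle|\Box\Ga^I\td n|.
\]
After taking $L^2$ norms and summing, the problem reduces to bounding $\|\pa\Ga^{\le K}\td n\|_2$ and $\|\langle t+r\rangle \Ga^{\le K-1}(|\phi|^2)\|_2$. The first is $\le C\varepsilon^2\langle t\rangle^{\de_1}$ by \eqref{3.44e}. For the second I expand $|\phi|^2$ via the Leibniz rule; for each product $\Ga^{I_1}\phi\cdot\Ga^{I_2}\phi$ with $|I_1|+|I_2|\le K-1$, I place the factor of lower order (at most $[(K-1)/2]\le K-5$) in $L^\infty$ using the a priori bound \eqref{A3}, and the other in $L^2$ via the top-order energy bound \eqref{3.32}, obtaining $\le C\varepsilon^2\langle t\rangle^{\de_1}$.

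For the $n_0$ contribution I exploit that $n_0$ is a free wave: since every $\Ga_j$ ($j=1,\ldots,6$) commutes with $\Box$, each $\Ga^\al n_0$ is again a solution of $\Box u = 0$. The weighted norm $\|\langle t-r\rangle\pa\Ga^{\le K-1} n_0\|_2$ is then a standard conformal-type quantity for free waves in $\R^{1+2}$. Via the identity $(t^2-r^2)\pa_t u = tL_0 u - x_i\Omega_{i0} u$ (and the analogous expressions for $\pa_j$), it may be controlled by $L_0$ and $\Omega_{i0}$ applied to $\Ga^{\le K-1}n_0$. Although $L_0$ is excluded from our nonlinear vector field framework, it may legitimately be used on the linear part $n_0$ since $\Box(L_0 n_0) = -2\Box n_0 = 0$, so $L_0 n_0$ is again a free wave whose energy is conserved and controlled by the spatially weighted $L^2$ initial data norms collected in $\|(N_0,N_1)\|_{\text{ID2}}$. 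This produces $\|\langle t-r\rangle \pa\Ga^{\le K-1}n_0\|_2 \le C_i\varepsilon\langle t\rangle^{\de_1}$, and combining with the $\td n$ estimate yields \eqref{3.59a}.

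The main technical obstacle will be the $n_0$ piece: one must carefully justify that the weighted $L^2$ initial data norms in \eqref{ID3} suffice for the conformal-type estimate to close with at most $\langle t\rangle^{\de_1}$ growth, especially in the far zone $r\gg t$ where spatial decay of the initial data plays the decisive role. By contrast, the $\td n$ contribution follows cleanly from Lemma~\ref{lem2.6} combined with the previously established bounds \eqref{3.44e}, \eqref{3.32}, and \eqref{A3}.
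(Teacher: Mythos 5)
Your plan matches the paper's proof almost step for step: the same split $n=n_0+A^\al\pa_\al\td n$, the same use of Lemma~\ref{lem2.6} on $\td n$ combined with \eqref{A3}, \eqref{3.32}, \eqref{3.44e}, and the same observation that on the free wave $n_0$ the operator $L_0$ is legitimate because $[\Box,L_0]=-2\Box$ still produces a free wave, so the identity $(t^2-r^2)\pa_t u=tL_0u-x_i\Omega_{i0}u$ reduces $\|\langle t-r\rangle\pa\Ga^{\le K-1}n_0\|_2$ to $\|\wtd\Ga\Ga^{\le K-1}n_0\|_2$. The one point where your sketch is imprecise is the phrase ``whose energy is conserved'': the quantity you need is the \emph{$L^2$ norm} of the free wave $\wtd\Ga\Ga^{\le K-1}n_0$ (not its $\dot H^1\times L^2$ energy), and in $\R^{1+2}$ this is not conserved but grows like $\log t$. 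That growth is precisely what Lemma~\ref{lemA1} quantifies (via the log-Hardy inequality \eqref{Hardy} applied near frequency zero), giving $\|\wtd\Ga\Ga^{\le K-1}n_0\|_2\ls\|\Lg x\Rg\na v_0\|_2+\log(t+2)\|\Lg x\Rg v_1\|_2$, which is what the paper cites as ``by \eqref{A.1} and \eqref{3.60b}.'' You correctly flag this as the main technical obstacle; be aware that the mechanism is low-frequency behavior of the 2D free wave rather than the far zone $r\gg t$, and that the resulting $\log t$ factor is then majorized by $\Lg t\Rg^{\de_1}$ to land in \eqref{3.59a}.
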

\begin{proof}
    The proof of \eqref{3.59a} consists two parts. Note that $\|\Lg t-|\cdot|\Rg \pa \Ga^{\le K-1} n \|_2\ls  \|\Lg t-|\cdot|\Rg \pa \Ga^{\le K-1} n_0 \|_2+ \|\Lg t-|\cdot|\Rg \pa^2 \Ga^{\le K-1} \td{n} \|_2$. We first consider the $n_0$ part. Notice that for any multi-index $J$ with $|J|\le K-1$, we have
    \begin{align}\label{3.60a}
        \Box \Ga^J n_0=0.
    \end{align}
We can then apply $\wtd{\Ga}$ to \eqref{3.60a} where $\widetilde{\Ga}$ is defined as in \eqref{def-tdGamma}. Recalling \eqref{2.47}, we have
   \begin{align}\label{3.60b}
        \Box \wtd{\Ga}\Ga^J n_0=0.
    \end{align}
It thereby follows that
\begin{align}
    \|\Lg t-|\cdot|\Rg\pa\Ga^J n_0\|_2\le& C\|\wtd{\Ga}\Ga^J n_0\|_2 &\text{( by \eqref{DE-1})}\\
    \le& C_i\varepsilon \log t & \text{( by \eqref{A.1} and \eqref{3.60b})},
\end{align}
{where the initial condition $\| (\wtd{\Ga} \Ga^{K-1}n_0)|_{t=0}\|_2\le C_i\varepsilon$ by \eqref{B2a}}.
For the term $\|\Lg t-|\cdot|\Rg \pa^2 \Ga^{J} \td{n} \|_2$, we observe that
\begin{align}
    \|\Lg t-|\cdot|\Rg \pa^2 \Ga^{\le K-1} \td{n} \|_2\ls \| \pa \Ga^{\le K} \td{n} \|_2+\|\Lg t+|\cdot|\Rg \Ga^{\le K-1}\phi \Ga^{\le [\frac{K-1}{2}]}\phi\|_2\le C_i\varepsilon+ C\varepsilon^2\Lg t\Rg^{\de_1},
\end{align}
which is true by \eqref{A3}, \eqref{3.32} and \eqref{3.44e}.
    
\end{proof}

\begin{lem}[Polynomial growth of an almost top-order weighted energy of the Klein-Gordon component]\label{lem3.5}
    Let $(\phi,n)$ be a couple of solutions to \eqref{eq:kg}. Assume \eqref{A2} and \eqref{A3} hold. Then there exists some constant $C>0$ such that
\begin{align}\label{3.322}
    \left\|\frac{\Lg \rho+t\Rg}{\Lg \rho-t\Rg} \Ga^{\le K-1}\phi \right\|_2\le C\varepsilon \Lg t\Rg^{\de_1}.
\end{align}
\end{lem}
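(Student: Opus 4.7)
The plan is to invoke the Klein-Gordon pointwise machinery of Lemma~\ref{lem2.7} on each $\Phi=\Ga^J\phi$ with $|J|\le K-1$, and then close the resulting $L^2$ bound by a small-$\varepsilon$ absorption argument. After commuting $\Ga^J$ through \eqref{eq:kg1} (using $[\Box+1,\Ga^J]=0$) to obtain $\Box\Phi+\Phi=F_J$ with
$$F_J:=\sum_{J_1+J_2=J}C^\al_{J;J_1,J_2}\,\pa_\al\Ga^{J_1}n\cdot\Ga^{J_2}\phi,$$
Lemma~\ref{lem2.7} yields the pointwise bound $|\tfrac{\Lg t+\rho\Rg}{\Lg t-\rho\Rg}\Phi|\ls|\pa\Ga^{\le 1}\Phi|+|\tfrac{\Lg t+\rho\Rg}{\Lg t-\rho\Rg}F_J|$. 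Taking $L^2$ in $x$, summing over $|J|\le K-1$, and writing $M(t):=\|\tfrac{\Lg t+\rho\Rg}{\Lg t-\rho\Rg}\Ga^{\le K-1}\phi\|_{2}$, one arrives at
$$M(t)\ls C\varepsilon\Lg t\Rg^{\de_1}+\sum_{|J|\le K-1}\Big\|\tfrac{\Lg t+\rho\Rg}{\Lg t-\rho\Rg}F_J\Big\|_2,$$
where the linear homogeneous contribution $\|\pa\Ga^{\le K}\phi\|_2$ has already been controlled via Lemma~\ref{lem3.1}.

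Next I would handle the nonlinear sum by splitting each product $\pa_\al\Ga^{J_1}n\cdot\Ga^{J_2}\phi$ according to which of the two indices is smaller. By pigeonhole $\min(|J_1|,|J_2|)\le[\tfrac{K-1}{2}]\le K-5$ for $K\ge 12$, so the a priori bounds \eqref{A2} and \eqref{A3} are applicable to the low-index factor. In \emph{Case A}, $|J_1|\le[\tfrac{K-1}{2}]$: after the decomposition $n=n_0+A^\al\pa_\al\td{n}$, hypothesis \eqref{A2} gives $|\pa\Ga^{J_1}n|\ls C_1\varepsilon\Lg t\Rg^{-1/2}\Lg\rho-t\Rg^{-(1+\de)/2}$, and since $\Lg t-\rho\Rg^{-(1+\de)/2}\le 1$, this contribution is bounded by $C_1\varepsilon\Lg t\Rg^{-1/2}M(t)$. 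In \emph{Case B}, $|J_2|\le[\tfrac{K-1}{2}]$: hypothesis \eqref{A3} gives $|\Ga^{J_2}\phi|\ls C_2\varepsilon\Lg t+\rho\Rg^{-1}$, so the weighted factor $\tfrac{\Lg t+\rho\Rg}{\Lg t-\rho\Rg}\Ga^{J_2}\phi$ is pointwise $\ls C_2\varepsilon\Lg t-\rho\Rg^{-1}\le C_2\varepsilon$, yielding a contribution bounded by $C_2\varepsilon\|\pa\Ga^{\le K-1}n\|_2\le C\varepsilon^2\Lg t\Rg^{\de_1}$ via Lemma~\ref{lem3.1}.

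Combining, one finds $M(t)\ls C\varepsilon\Lg t\Rg^{\de_1}+C_1\varepsilon\Lg t\Rg^{-1/2}M(t)$; for $\varepsilon$ sufficiently small the self-term is absorbed on the left and $M(t)\le C\varepsilon\Lg t\Rg^{\de_1}$ follows as claimed. The hard part will be precisely Case A, where $M(t)$ reappears on the right-hand side: the decay-in-time $\Lg t\Rg^{-1/2}$ coming from hypothesis \eqref{A2} on the wave field is exactly what legitimates the absorption and prevents a log (or worse) growth from entering. A secondary bookkeeping check is that $[\tfrac{K-1}{2}]+1\le K-5$, which is satisfied whenever $K\ge 12$.
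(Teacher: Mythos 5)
Your proof is correct and takes essentially the same approach as the paper: both reduce $\bigl\|\tfrac{\Lg\rho+t\Rg}{\Lg\rho-t\Rg}\Ga^{\le K-1}\phi\bigr\|_2$ to $\|\pa\Ga^{\le K}\phi\|_2$ plus a weighted nonlinear term, split that term by pigeonhole between the low-index factor on $n$ (giving the self-term $C_1\varepsilon\Lg t\Rg^{-1/2}M(t)$ via \eqref{A2}) and the low-index factor on $\phi$ (giving $C\varepsilon^2\Lg t\Rg^{\de_1}$ via \eqref{A3} and Lemma~\ref{lem3.1}), and close by small-$\varepsilon$ absorption. The only cosmetic difference is that you invoke Lemma~\ref{lem2.7} globally, whereas the paper inserts an explicit cut-off at $\rho=2t$ and uses Lemma~\ref{lem2.13} on the region $\rho\le 3t$; both lemmas are derived from the same identity \eqref{2.300}, so the two routes are interchangeable.
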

\begin{proof}
    The proof consists two areas. 
    \begin{align}
         \left\|\frac{\Lg \rho+t\Rg}{\Lg \rho-t\Rg} \Ga^{\le K-1}\phi \right\|_2\ls  \left\|\chi^{\frac12}(\rho-2t)\frac{\Lg \rho+t\Rg}{\Lg \rho-t\Rg} \Ga^{\le K-1}\phi \right\|_2+ \left\|(1-\chi^{\frac12}(\rho-2t))  \frac{\Lg \rho+t\Rg}{\Lg \rho-t\Rg} \Ga^{\le K-1}\phi \right\|_2,
    \end{align}
   where the cut-off function $\chi(s)$ is defined in \eqref{chi}. For the region $\rho\ge 2t$, it is clear that $\Lg \rho+t\Rg \sim \Lg \rho-t\Rg$ and therefore 
\begin{align}
    \left\|\chi^{\frac12}(\rho-2t)\frac{\Lg \rho+t\Rg}{\Lg \rho-t\Rg} \Ga^{\le K-1}\phi \right\|_2\ls \|\Ga^{\le K-1}\phi\|_2\ls C\varepsilon t^{\de_1},\quad \text{ by \eqref{3.32}}.
\end{align}
        It then suffices to estimate the second term. In fact it can be observed that when $t\ge 2$, we automatically have $\rho\le 2t+2\le 3t$. As a result, we apply Lemma~\ref{lem2.13} to derive that
        \begin{equation}\label{3.37a}
             \begin{aligned}
            &\left\|(1-\chi^{\frac12}(\rho-2t))  \frac{\Lg \rho+t\Rg}{\Lg \rho-t\Rg} \Ga^{\le K-1}\phi \right\|_2\\ \ls& \|\pa^2\Ga^{\le K-1}\phi\|_2+\|\pa \Ga^{\le K}\phi\|_2 + \sum_{|J_1|+|J_2|\le K-1}\left\|(1-\chi^{\frac12}(\rho-2t))\frac{\Lg \rho+t\Rg}{\Lg \rho-t\Rg} \pa \Ga^{J_1} n\Ga^{J_2}\phi\right\|_2,\quad \text{( by \eqref{2.62})}.
         \end{aligned}
        \end{equation}
    Note that by \eqref{A2}-\eqref{A3}, we have 
    \begin{equation}\label{3.37b}
        \begin{aligned}
       &\sum_{|J_1|+|J_2|\le K-1}\left\|(1-\chi^{\frac12}(\rho-2t))\frac{\Lg \rho+t\Rg}{\Lg \rho-t\Rg} \pa \Ga^{J_1} n\Ga^{J_2}\phi\right\|_2\\ \ls&
       \left \|(1-\chi^{\frac12}(\rho-2t))\frac{\Lg \rho+t\Rg}{\Lg \rho-t\Rg} \pa \Ga^{\le [\frac{K-1}{2}]} n\Ga^{\le K-1}\phi \right\|_2+\left\|(1-\chi^{\frac12}(\rho-2t))\frac{\Lg \rho+t\Rg}{\Lg \rho-t\Rg} \pa \Ga^{\le K-1} n\Ga^{\le [\frac{K-1}{2}]}\phi\right\|_2\\
       \ls& \varepsilon\Lg t\Rg^{-\frac12}\left\|(1-\chi^{\frac12}(r-2t))  \frac{\Lg \rho+t\Rg}{\Lg \rho-t\Rg} \Ga^{\le K-1}\phi \right\|_2+\|\pa \Ga^{\le K-1}n\|_2.
   \end{aligned}
    \end{equation}
   Combining the estimates \eqref{3.37a} and \eqref{3.37b} we then conclude \eqref{3.322} using the smallness assumption.

\end{proof}

\begin{lem}[Polynomial growth of a lower-order weighted energy slightly away from the light cone]\label{lem3.1a}
Let $(\phi,n)$ be a couple of solutions to \eqref{eq:kg}. Assume \eqref{A2} and \eqref{A3} hold. Then we have for some constant $C>0$,
\begin{align}\label{3.33a}
\| \chi^{\frac12}(|\cdot|-t)\Lg |\cdot|-t\Rg \partial \Gamma^{\le K-5}\phi\|_{2}+\| \chi^{\frac12}(|\cdot|-t)\Lg |\cdot|-t\Rg \Gamma^{\le K-5}\phi\|_{2}\le C_i\varepsilon+C\varepsilon^2 t^{\de_1},
\end{align}
where $C_i>0$ is exact from the choice of the initial data.

\end{lem}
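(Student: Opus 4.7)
The plan is to run a weighted energy estimate for $\Box\,\Gamma^I\phi+\Gamma^I\phi = F_I$ with $|I|\le K-5$, where $F_I = \sum_{I_1+I_2=I}C^{\alpha}_{I;I_1,I_2}\pa_\alpha\Gamma^{I_1}n\,\Gamma^{I_2}\phi$. I multiply both sides by the combined multiplier $e^{p}\,w\,\pa_t\Gamma^I\phi$, where $w := \chi(r-t)\langle r-t\rangle^{2}$ and $p := q(r-t)$ with $q$ the Alinhac ghost weight from \eqref{q}, and integrate over $\mathbb{R}^{2}$. A direct computation (parallel to the manipulation in the proof of Lemma~\ref{lem3.1}) shows that in the support region $r\ge t+1$ the combined weight-derivative contributions produce the non-positive bulk term
\begin{equation*}
-\tfrac{1}{2}\int e^{p}\bigl[q'(r-t)w + \chi'(r-t)\langle r-t\rangle^{2} + 2(r-t)\chi(r-t)\bigr]\bigl[((\pa_t+\pa_r)\Gamma^I\phi)^{2} + r^{-2}|\pa_\theta\Gamma^I\phi|^{2} + |\Gamma^I\phi|^{2}\bigr]\,dx.
\end{equation*}
The $-\tfrac{1}{2}\int e^{p}q'w\,|\Gamma^I\phi|^{2}$ piece will play the key absorption role for the high-$\phi$-derivative component of the nonlinearity.

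Next I estimate $\int e^{p}wF_I\pa_t\Gamma^I\phi$ via the usual dichotomy on $|I_1|$ versus $|I_2|$. In Case A ($|I_2|\le (K-5)/2$) I place the weight $\chi^{1/2}\langle r-t\rangle$ on $\pa\Gamma^{\le K-5}n$ and invoke Lemma~\ref{lem3.7} together with \eqref{A3}; the contribution is $\lesssim \varepsilon^{2}\langle t\rangle^{-1+\delta_{1}}\mathcal{E}^{1/2}$, where $\mathcal{E} := \sum_{|I|\le K-5}\int e^{p}w\bigl(|\pa\Gamma^I\phi|^{2}+|\Gamma^I\phi|^{2}\bigr)dx$. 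In Case B ($|I_1|\le (K-5)/2$) I use the pointwise bound \eqref{A2}, namely $|\pa\Gamma^{\le (K-5)/2}n|\lesssim\varepsilon\langle t\rangle^{-1/2}\langle r-t\rangle^{-(1+\delta)/2}$, together with the weighted Cauchy--Schwarz $|WN'\Psi\pa_t u|\le \tfrac{\kappa}{4}Wq'|\Psi|^{2} + \kappa^{-1}W(N')^{2}|\pa_t u|^{2}/q'$ for a fixed small $\kappa$, where $N' = \pa\Gamma^{I_1}n$, $\Psi = \Gamma^{I_2}\phi$, $u = \Gamma^I\phi$, and $W = e^pw$. After summing over $|I|\le K-5$ the first piece is absorbed by a fraction of the good bulk term above (since $|I_2|\le K-5$ as well); the second is controlled using the pointwise estimate
\begin{equation*}
W(N')^{2}/q' \,\lesssim\, \varepsilon^{2}\langle t\rangle^{-1}\chi(r-t)\langle r-t\rangle^{2-\delta}\log^{2}(2+(r-t)^{2}) \,\lesssim\, \varepsilon^{2}\langle t\rangle^{-1}W,
\end{equation*}
giving a contribution $\lesssim\varepsilon^{2}\langle t\rangle^{-1}\mathcal{E}$.

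Combining these estimates produces the differential inequality $\tfrac{d}{dt}\mathcal{E}\lesssim\varepsilon^{2}\langle t\rangle^{-1+\delta_{1}}\mathcal{E}^{1/2}+\varepsilon^{2}\langle t\rangle^{-1}\mathcal{E}$. Gronwall, together with the initial bound $\mathcal{E}^{1/2}(0)\le C_{i}\varepsilon$ (which follows from \eqref{ID3} by the same type of reasoning as in Remark~\ref{rem3.3}, cf.~Proposition~\ref{prop}), yields $\mathcal{E}^{1/2}(t)\lesssim C_{i}\varepsilon\langle t\rangle^{C\varepsilon^{2}} + \varepsilon^{2}\langle t\rangle^{\delta_{1}+C\varepsilon^{2}}$; after absorbing the $C\varepsilon^{2}$ exponent into a slightly enlarged $\delta_{1}$ and taking $\varepsilon$ small enough this gives the claim. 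The principal obstacle I foresee is the Case B absorption: the residual weight $\langle r-t\rangle^{2-\delta}\log^{2}(2+(r-t)^{2})$ must be dominated by $\langle r-t\rangle^{2}$, which is exactly why both the sharp $(1+\delta)/2$ exponent in the a priori hypothesis \eqref{A2} and the $(\log(2+s^{2}))^{-2}$ correction in the definition \eqref{q} of $q'$ are essential.
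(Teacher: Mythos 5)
Your argument is correct, but it takes a genuinely different route from the paper in the case where the wave factor carries the low index. The paper's proof uses the same multiplier idea but \emph{without} the ghost weight: it tests with $\chi(\rho-t)\Lg \rho-t\Rg^2\pa_t\Ga^I\phi$, notes that the weight-derivative bulk term $\chi'\Lg\rho-t\Rg^2+2(\rho-t)\chi$ is nonnegative and simply drops it, and then estimates \emph{both} cases of the nonlinearity by putting the $L^\infty$ norm on a $\phi$-factor and the weighted $L^2$ norm on the wave factor via Lemma~\ref{lem3.7} (i.e.\ \eqref{3.59a}). The key observation that makes this possible, and that your Case~B does not exploit, is that in this lower-order estimate even the ``high'' $\phi$-index is at most $K-5$, so the a priori bound \eqref{A3} applies to it; hence there is nothing that needs to be absorbed, the right-hand side is purely of the form $\varepsilon^2\Lg t\Rg^{-1+\de_1}\mathcal{E}^{1/2}$, and a direct time integration gives $C_i\varepsilon+C\varepsilon^2 t^{\de_1}$ with the exact initial constant. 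Your Case~B instead transplants the top-order mechanism of Lemma~\ref{lem3.1} (pointwise \eqref{A2} on the low-index wave factor, weighted Cauchy--Schwarz, absorption of $\kappa\int e^p q' w|\Ga^{I_2}\phi|^2$ into the ghost-weight bulk term), which is legitimate — the absorption works after summation since $|I_2|\le K-5$, and your weight bookkeeping $W(N')^2/q'\ls_\de \varepsilon^2\Lg t\Rg^{-1}W$ is right because $\log^2(2+s^2)\ls_\de\Lg s\Rg^{\de}$ — but it costs you the extra term $\varepsilon^2\Lg t\Rg^{-1}\mathcal{E}$ and hence a Gronwall factor $\Lg t\Rg^{C\varepsilon^2}$.

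One small caution on your last step: $\de_1$ is fixed once and for all after Lemma~\ref{lem3.1} (and must satisfy $\de_1<\de/4$ for the bootstrap closure), so you should not ``enlarge $\de_1$''. Instead, take $\varepsilon$ small enough that $C\varepsilon^2\le\de_1/2$ and split time regimes: where $\Lg t\Rg^{C\varepsilon^2}\le 2$ the first term is $\le 2C_i\varepsilon$, and otherwise $\Lg t\Rg^{\de_1-C\varepsilon^2}\ge\varepsilon^{-1}$ so that $C_i\varepsilon\Lg t\Rg^{C\varepsilon^2}\ls\varepsilon^2\Lg t\Rg^{\de_1}$. This yields the stated bound with the same $\de_1$, at the harmless price that the leading constant is a fixed multiple of $C_i$ rather than exactly $C_i$; since all downstream uses (e.g.\ \eqref{3.63}, \eqref{3.52b}) only invoke generic constants, this does not affect the bootstrap.
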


\begin{proof}
%\texttt{Step 1, the wave component n.} Recall that since $n=n_0+A^\al\pa_\al \td{n}$, we then consider $\| \chi^{\frac12}(|\cdot|-t) \Lg |\cdot|-t\Rg \pa \Ga^I n_0\|_2$ and $\|\chi^{\frac12}(|\cdot|-t)\Lg |\cdot|-t\Rg \pa^2 \Ga^I \td n\|_2$ for any multi-index $I$ with $|I|\le k_2$. We first notice that 
%\begin{align}
%\Box \Ga^I n_0=0,
%\end{align}
%similar to the arguments in \eqref{3.34t}-\eqref{3.34y}.

To proceed, we apply the test function $\chi(\rho-t)\Lg\rho-t\Rg^2 \pa_t\Ga^I \phi$ to \eqref{3.0A} (with $|I|\le K-5$) and after integrating by parts we arrive at
 \begin{align}\label{3.45}
  &\frac{d}{dt}\left(
   \|\chi^{\frac12}(\rho-t)\Lg \rho-t\Rg \pa \Ga^I \phi \|_2^2+\|\chi^{\frac12}(\rho-t)\Lg \rho-t\Rg  \Ga^I \phi \|_2^2\right)\\+&\int_{\R^2} \left(\chi'(\rho-t)\Lg \rho-t\Rg^2+2\chi(\rho-t) \cdot (\rho-t)\right) (|T\Ga^I \phi|^2+|\Ga^I\phi|^2) \\
    \ls& \sum_{I_1+I_2\le I}\int_{\R^2}|\chi(\rho-t)\Lg \rho-t\Rg^2 \pa\Ga^{I_1} n \Ga^{I_2}\phi \pa_t \Ga^I \phi|\\
    \ls& \sum_{I_1+I_2\le I}\|\chi^{\frac12}(\rho-t)\Lg \rho-t\Rg\pa \Ga^{I_1}n\Ga^{I_2}\phi\|_2\cdot \|\chi^{\frac12}(\rho-t)\Lg \rho-t\Rg\pa_t \Ga^I \phi\|_2,
 \end{align}
 where $|T\Ga^I \phi|=\sum_{i=1}^2|T_i \Ga^I \phi|^2$ as $T_i$ are defined in \eqref{DefT}. It then can be observed that 
\begin{align}\label{3.34y}
    \chi'(\rho-t)\Lg \rho-t\Rg^2+2\chi(\rho-t)\cdot (\rho-t)\ge 0.
\end{align}
Therefore \eqref{3.45} leads to
\begin{equation}\label{3.36a}
 \begin{aligned}
   & \frac{d}{dt}\left(
   \|\chi^{\frac12}(\rho-t)\Lg \rho-t\Rg \pa \Ga^I \phi \|_2^2+\|\chi^{\frac12}(\rho-t)\Lg \rho-t\Rg  \Ga^I \phi \|_2^2\right)\\
    \ls& \sum_{I_1+I_2\le I}\|\chi^{\frac12}(\rho-t)\Lg \rho-t\Rg\pa \Ga^{I_1}n\Ga^{I_2}\phi\|_2\cdot \|\chi^{\frac12}(\rho-t)\Lg \rho-t\Rg\pa_t \Ga^I \phi\|_2,
 \end{aligned}
 \end{equation}
It then suffices to estimate $\|\chi^{\frac12}(\rho-t)\Lg \rho-t\Rg\pa \Ga^{I_1}n\Ga^{I_2}\phi\|_2$ by dividing into two cases:
\begin{align}
    &\sum_{I_1+I_2\le I}\|\chi^{\frac12}(\rho-t)\Lg \rho-t\Rg\pa \Ga^{I_1}n\Ga^{I_2}\phi\|_2\\ \ls& \|\chi^{\frac12}(\rho-t)\Lg \rho-t\Rg \pa \Ga^{\le [\frac{K-5}{2}]} n \Ga^{\le K-5}\phi\|_2+\|\chi^{\frac12}(\rho-t)\Lg \rho-t\Rg \pa \Ga^{\le K-5} n \Ga^{\le [\frac{K-5}{2}]}\phi\|_2\\
    \coloneqq &A_1+A_2.
\end{align}
$A_2$ is handled as follows:
\begin{align}
    A_2\ls \|\Lg \rho-t\Rg \pa \Ga^{\le K-5}n\|_2\|\Ga^{\le [\frac{K-5}{2}]}\phi\|_\infty\ls \varepsilon^2 \Lg t\Rg^{-1+\de_1},\quad \text{by \eqref{3.59a} and \eqref{A3}.  }
\end{align}
Similarly we can estimate $A_1:$
\begin{align}
    A_1\ls \|\chi^{\frac12}(\rho-t)\Lg \rho-t\Rg \pa \Ga^{\le [\frac{K-5}{2}]} n\|_2\| \Ga^{\le K-5}\phi\|_\infty\ls \varepsilon^2\Lg t\Rg^{-1+\de_1}.
\end{align}
%Recall that \eqref{3.17}-\eqref{3.17a}:
%\begin{align*}
%\begin{cases}
% &\Box \Gamma^I \td{n}=\sum_{I_1+I_2=I }C_{I;I_1,I_2}\Gamma^{I_1}\phi\Gamma^{I_2}\phi,\\
%    &\Box \pa\Gamma^I \td{n}=\sum_{I_1+I_2=I }\td{C}_{I;I_1,I_2}\pa\Gamma^{I_1}\phi\Gamma^{I_2}\phi.
%\end{cases}
%\end{align*}
%For the second term $\|\chi^{\frac12}(r-t)\Lg t+r\Rg \Box \Ga^I \td{n}\|_2$, we again recall \eqref{3.35s}. It then follows that
%\begin{equation}\label{3.44s}
%    \begin{aligned}
%    \|\chi^{\frac12}(r-t)\Lg t+r\Rg \Box \Ga^I \td{n}\|_2\ls & \sum_{I_1+I_2=I}\|  \Lg t+r\Rg \Ga^{J_1}\phi\Ga^{J_2}\phi\|_2\\
%    \ls&\|\Lg t+r\Rg \Ga^{\le [\frac{k_2}{2}]}\phi\|_{\infty}\|\Ga^{\le k_2}\phi\|_2\\
%    \le& C\varepsilon^2 \Lg t\Rg^{\de_1} \quad (\text{by \eqref{A3} and \eqref{3.32} }). 
%\end{aligned}
%\end{equation}
%Combining \eqref{3.44s},\eqref{3.44d},\eqref{3.36a} and \eqref{3.34u}, we thereby arrive at
%\begin{align}\label{3.33aa}
%    \|\chi^{\frac12}(r-t)\Lg r-t\Rg \pa \Ga^{\le k_2}n\|_2\le C_i+C\varepsilon^2t^{\de_1}
%\end{align}
We then conclude \eqref{3.33a} by carefully noticing that the initial condition holds (see \eqref{B1} and \eqref{B1a} where the extra weight is harmless)
{
\begin{align}
    \left\|\left(\chi^{\frac12}(\rho-t)\Lg \rho-t\Rg \pa \Ga^{\le K-5} \phi \right)\Big|_{t=0}\right\|_2+\left\|\left(\chi^{\frac12}(\rho-t)\Lg \rho-t\Rg  \Ga^{\le K-5} \phi \right)\Big|_{t=0}\right\|_2\le C_i \varepsilon.
\end{align}
}
%\cmtr{
%    It is worthwhile mentioning that we require that $[\frac{k_2}{2}]\le k$ for everything except $A_1$ where $k_2\le k$.
%}
\end{proof}

%\begin{rem}
%    The estimate \eqref{3.44d} in fact holds for a wider range of the multi-index:
%    The proof of \eqref{3.44e} follows from exactly the same arguments as in showing \eqref{3.44d}.
%\end{rem}
Before we head to show the uniform boundedness of the lower-order energy, it is of much use to show the following lemmas.
\begin{lem}\label{lem3.7a}
    Let $(\phi,n)$ be a couple of solutions to \eqref{eq:kg}. Assume \eqref{A2} and \eqref{A3} hold. Then there exists some constant $C>0$ such that
\begin{align}\label{3.52}
    &|\Ga^{\le K-7}\phi|\le C\varepsilon \frac{\Lg t-r\Rg }{\Lg t+r\Rg^2},\\
    &{\chi^{\frac12}(r-t)|\Ga^{\le K-7}\phi|\le C\varepsilon \Lg t+r\Rg^{-\frac{5}{4}+\frac{\de_1}{2} }}\label{3.52b}
\end{align}

\end{lem}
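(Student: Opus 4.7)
The plan is to establish the two pointwise bounds on $\Ga^{\le K-7}\phi$ in sequence, using the Klein--Gordon identity of Lemma~\ref{lem2.7} (the ``machinery'' of Remark~\ref{rem2.12}) for the first estimate, and then combining the first estimate with the weighted $L^2$ bound of Lemma~\ref{lem3.1a} through the Sobolev-type inequality \eqref{t12b} for the second. In both cases the a priori hypotheses \eqref{A2}--\eqref{A3} furnish the pointwise control on all lower-order inputs.

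For the first estimate, fix a multi-index $I$ with $|I|\le K-7$ and differentiate to obtain $\Box\Ga^I\phi + \Ga^I\phi = F_I$, where $F_I = \sum_{I_1+I_2=I} C^\al_{I;I_1,I_2}\,\pa_\al\Ga^{I_1}n\cdot\Ga^{I_2}\phi$. Applying Lemma~\ref{lem2.7} yields
\[
|\Ga^I\phi| \ls \frac{\Lg t-r\Rg}{\Lg t+r\Rg}\,|\pa\Ga^{\le |I|+1}\phi| + |F_I|.
\]
Since $\pa_t,\pa_1,\pa_2$ are precisely $\Ga_1,\Ga_2,\Ga_3$, the pointwise bound \eqref{A3} gives $|\pa\Ga^{\le K-6}\phi|\le C\varepsilon\Lg t+r\Rg^{-1}$, producing the main contribution $C\varepsilon\Lg t-r\Rg\Lg t+r\Rg^{-2}$. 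For the nonlinearity, use $n=n_0+A^\al\pa_\al\td n$ so that $|\pa\Ga^{\le K-5}n|\ls |\pa\Ga^{\le K-5}n_0|+|\pa^2\Ga^{\le K-5}\td n|$ is controlled by \eqref{A2}; coupling with \eqref{A3} and the fact that both $|I_1|,|I_2|\le K-7$, one arrives at
\[
|F_I|\ls \varepsilon^2\Lg t\Rg^{-1/2}\Lg r-t\Rg^{-(1+\de)/2}\Lg t+r\Rg^{-1},
\]
which is absorbed into the main term upon choosing $\varepsilon$ small in the bootstrap.

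For the second estimate, restrict to $r\ge t+1$ (the support of $\chi^{1/2}(r-t)$) so that $|x|\ge t/2$ and $\Lg t-r\Rg = \Lg r-t\Rg$, and split according to whether $\Lg r-t\Rg \le \Lg t+r\Rg^{3/4+\de_1/2}$ or the opposite. In the former regime the first estimate already gives $\chi^{1/2}(r-t)|\Ga^{\le K-7}\phi|\le C\varepsilon\Lg r-t\Rg\Lg t+r\Rg^{-2} \le C\varepsilon\Lg t+r\Rg^{-5/4+\de_1/2}$. In the latter regime, apply \eqref{t12b} to $u:=\chi^{1/2}(\rho-t)\Lg \rho-t\Rg\Ga^I\phi$; since $\pa\chi^{1/2}(\rho-t)$ is compactly supported in $\{r-t\in[1,2]\}$ and $|\na\Lg \rho-t\Rg|\le 1$, all commutator errors are dominated by $\|\chi^{1/2}(\rho-t)\Lg \rho-t\Rg\Ga^{\le K-5}\phi\|_2$, which by \eqref{3.33a} is bounded by $C\varepsilon t^{\de_1}$. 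Hence
\[
r^{1/2}\chi^{1/2}(r-t)\Lg r-t\Rg|\Ga^{\le K-7}\phi|\ls C\varepsilon t^{\de_1},
\]
and since $t\le r\sim \Lg t+r\Rg$ in this region, one obtains $\chi^{1/2}(r-t)|\Ga^{\le K-7}\phi|\le C\varepsilon\Lg t+r\Rg^{-1/2+\de_1}\Lg r-t\Rg^{-1}$. On the regime $\Lg r-t\Rg \ge \Lg t+r\Rg^{3/4+\de_1/2}$ this is no larger than $C\varepsilon\Lg t+r\Rg^{-5/4+\de_1/2}$, closing the argument.

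The main obstacle is the delicate handling of the nonlinear term $F_I$ in Step~1 near the light cone, where the naive size of $|F_I|$ is of the same order as the main term once one tracks the critical decay $\Lg t\Rg^{-1/2}\Lg r-t\Rg^{-(1+\de)/2}$ of $\pa n$; closing this requires exploiting the smallness of $\varepsilon$ together with the fact that the nonlinearity carries a $\phi$-factor whose $\Lg t+r\Rg^{-1}$ decay is available from \eqref{A3}. The other subtle point is the calibration of the threshold $\Lg t+r\Rg^{3/4+\de_1/2}$ in Step~2, which is chosen precisely so that the two complementary regimes both produce the target exponent $-5/4+\de_1/2$.
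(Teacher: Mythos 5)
Your Step 1 contains a genuine gap. After applying Lemma~\ref{lem2.7} you estimate the nonlinearity by inserting the a priori bounds on \emph{both} factors, arriving at $|F_I|\ls \varepsilon^2\Lg t\Rg^{-1/2}\Lg r-t\Rg^{-(1+\de)/2}\Lg t+r\Rg^{-1}$, and you claim this is absorbed into the main term $C\varepsilon\Lg t-r\Rg\Lg t+r\Rg^{-2}$ by choosing $\varepsilon$ small. That comparison fails near the light cone: for $|r-t|\le 1$ your bound on $|F_I|$ is of size $\varepsilon^2\Lg t\Rg^{-3/2}$ while the target is of size $\varepsilon\Lg t\Rg^{-2}$, so the required inequality $\varepsilon^2 t^{-3/2}\ls \varepsilon t^{-2}$ forces $t\ls \varepsilon^{-2}$; no smallness of $\varepsilon$ makes the absorption work uniformly in time. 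The missing idea is that one must \emph{not} spend \eqref{A3} on the $\phi$-factor of the nonlinearity. The paper keeps that factor as the unknown: bounding only $\pa_\al\Ga^{I_1}n$ via \eqref{A2} (legitimate since $|I_1|\le K-7\le K-5$, after writing $n=n_0+A^\al\pa_\al\td n$), the nonlinear contribution is $\le C\varepsilon\Lg t\Rg^{-1/2}\Lg r-t\Rg^{-(1+\de)/2}|\Ga^{\le K-7}\phi|\le C\varepsilon|\Ga^{\le K-7}\phi|$, and since every $|I_2|\le K-7$ this term is absorbed into the \emph{left-hand side} quantity $|\Ga^{\le K-7}\phi|$ itself, not into the main term; \eqref{3.52} then follows from the smallness of $\varepsilon$. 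With that correction your Step 1 coincides with the paper's argument.

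Your Step 2 is essentially sound and in fact takes a slightly different route from the paper: instead of the paper's geometric-mean interpolation between $\chi^{\frac12}(r-t)\Lg r-t\Rg|\Ga^{\le K-7}\phi|\ls \varepsilon\Lg t\Rg^{\de_1}\Lg t+r\Rg^{-1/2}$ (obtained from \eqref{t12b} and \eqref{3.33a}) and the bound \eqref{3.52}, you split at the threshold $\Lg r-t\Rg\sim\Lg t+r\Rg^{3/4+\de_1/2}$, using \eqref{3.52} below the threshold and the weighted Sobolev bound above it; both regimes produce the exponent $-\frac54+\frac{\de_1}{2}$, and your use of the weight $|x|^{1/2}\gtrsim\Lg t+r\Rg^{1/2}$ in \eqref{t12b} (valid on the support of $\chi^{\frac12}(r-t)$, where $r\ge t+1$) treats the far-exterior region cleanly. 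However, the regime below the threshold relies on \eqref{3.52}, so the gap in Step 1 propagates; once Step 1 is repaired as above, your Step 2 goes through.
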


\begin{proof}
The first inequality follows directly from Lemma~\ref{lem2.7} and Remark~\ref{rem2.12}:
\begin{align}
    |\Ga^{\le K-7}\phi|\ls& \frac{\Lg r-t\Rg}{\Lg r+t\Rg}|\pa\Ga^{\le K-6}\phi|+|\pa \Ga^{\le K-7}n \Ga^{\le K-7}\phi|,\quad (\text{by \eqref{kgw} })\\ \le& C\varepsilon\frac{\Lg r-t\Rg}{\Lg r+t\Rg^2}+ C\varepsilon \Lg r-t\Rg^{-\frac{1+\de}{2}}\Lg t\Rg^{-\frac12}|\Ga^{\le K-7}\phi|,\quad (\text{ by \eqref{A2} and \eqref{A3}}).
\end{align}
Then we obtain the desired \eqref{3.52} using the smallness assumption. To show \eqref{3.52b}, we first recall by \eqref{t12b} and \eqref{3.33a} one can derive that
\begin{align}\label{3.63}
    \chi^{\frac12}(r-t)\Lg r-t\Rg |\Ga^{\le K-7}\phi|\ls \Lg t\Rg^{-\frac12}\|\na^{\le 1}(\chi^{\frac12}(|\cdot|-t) \Lg |\cdot|-t \Rg\Ga^{\le K-6}\phi)\|_2\le C_i\varepsilon \Lg t\Rg^{-\frac12}+C\varepsilon^2t^{-\frac12+\de_1}.
\end{align}
As a result, we can further get
\begin{align}
 \chi^{\frac12}(r-t)|\Ga^{\le K-7}\phi|  \ls &(\chi^{\frac12}(r-t)|\Ga^{\le K-7}\phi|)^{\frac12}|\Ga^{\le K-7}\phi|^{\frac12}\\
 \le& C\varepsilon \Lg t\Rg^{-\frac14+\frac{\de_1}{2} }\Lg r-t\Rg^{-\frac12}\cdot \Lg t+r\Rg^{-1}\Lg t-r\Rg^{\frac12}\quad \text{ by \eqref{3.63} and \eqref{3.52}}.
\end{align}
We thus conclude \eqref{3.52b} from here.
\end{proof}

%\begin{rem}
%    It is interesting to point out that the proof for Lemma~\ref{lem3.1a} and \eqref{3.59a} in fact imply 
%    \cmtb{\begin{align}\label{3.59b}
%           \|\Lg t-r\Rg \pa \Ga^{\le K-1} n \|_2\le C_i\varepsilon +C\varepsilon^2 \Lg t\Rg^{\de_1}. 
%    \end{align}}
%\end{rem}

With many estimates in hand, we present two uniform boundedness results of certain lower-order energy. As can be seen later, they play key roles in completing the bootstrap argument and in showing the linear scattering as well.

\begin{prop}[Uniform boundedness of the lower-order energy (wave)]\label{lem3.2}
Let $(\phi,n)$ be a couple of solutions to \eqref{eq:kg}. Assume \eqref{A2} and \eqref{A3} hold. Then we have for some constant $C>0$,
\begin{align}\label{3.34}
\|\pa \Gamma^{\le K-2}n\|_{2} \le C_i\varepsilon+C\varepsilon^2,
\end{align}
where $C_i>0$ is exact from the choice of the initial data. Moreover we have
\begin{align}\label{3.34b}
     \|\chi^{\frac12}(|\cdot|-t) \pa \Ga^{\le K-1} \td{n} \|_2\le C_i\varepsilon +C\varepsilon^2 .
\end{align}
\end{prop}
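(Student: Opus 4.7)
My plan is to prove the two estimates separately, leveraging the decomposition $n=n_0+A^\al\pa_\al\td n$ and the improved Klein-Gordon pointwise bound from Lemma~\ref{lem3.7a}.

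\textit{Sketch of \eqref{3.34b}.} I set up a cutoff energy estimate for $\td n$ outside the light cone. For any multi-index $|I|\le K-1$, applying $\Ga^I$ to $\Box\td n=|\phi|^2$ and testing against $\chi(r-t)\,\pa_t\Ga^I\td n$, standard integration by parts combined with $\chi'\ge 0$ (noting $\pa_t\chi(r-t)=-\chi'(r-t)$ and $\pa_i\chi(r-t)=\chi'(r-t)\omega_i$) together with the pointwise inequality $|\nabla u|\ge|\pa_r u|$ absorbs all boundary terms with favorable sign, leaving
\[
\tfrac{d}{dt}\|\chi^{1/2}(r-t)\pa\Ga^I\td n\|_2^2\le 2\|\chi^{1/2}(r-t)\Ga^I(|\phi|^2)\|_2\cdot\|\chi^{1/2}(r-t)\pa_t\Ga^I\td n\|_2.
\]
I expand $\Ga^I(|\phi|^2)$ by Leibniz into terms $\Ga^{J_1}\phi\cdot\Ga^{J_2}\phi$ with $|J_1|+|J_2|\le K-1$. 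Since $K\ge 12$, the low-index factor has order $\min(|J_1|,|J_2|)\le[(K-1)/2]\le K-7$, so I may apply the fast outside-cone decay \eqref{3.52b} there, with the high-index factor estimated in $L^2$ via \eqref{3.32}. This yields $\|\chi^{1/2}(r-t)\Ga^{\le K-1}(|\phi|^2)\|_2\le C\varepsilon^2\Lg t\Rg^{-5/4+O(\de_1)}$, which is integrable in time. Combined with the $O(\varepsilon^2)$ initial contribution (Remark~\ref{rem3.3}), integration gives \eqref{3.34b}.

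\textit{Sketch of \eqref{3.34}.} Writing $n=n_0+A^\al\pa_\al\td n$, energy conservation of the free wave gives $\|\pa\Ga^{\le K-2}n_0\|_2\le C_i\varepsilon$, so after commuting $\pa$ and $\Ga$'s it suffices to bound $\|\pa^2\Ga^{\le K-2}\td n\|_2$. I split by $\chi^{1/2}(r-t)$: the outside piece is absorbed into $\|\chi^{1/2}(r-t)\pa\Ga^{\le K-1}\td n\|_2\le C_i\varepsilon+C\varepsilon^2$ via \eqref{3.34b}. For the interior piece (supported in $r\le t+2$), I apply Lemma~\ref{lem2.6} to $\Ga^{\le K-2}\td n$,
\[
\Lg t-r\Rg\,|\pa^2\Ga^{\le K-2}\td n|\ls|\pa\Ga^{\le K-1}\td n|+\Lg t+r\Rg\,|\Ga^{\le K-2}(|\phi|^2)|.
\]
On the bulk interior $r\le t/2$, where $\Lg t-r\Rg\sim\Lg t+r\Rg\sim\Lg t\Rg$, dividing by $\Lg t-r\Rg$ and using the improved pointwise bound \eqref{3.52} on one factor of $\phi$ together with \eqref{3.32} (and \eqref{3.44e}) on the other yields a contribution of size $O(\varepsilon^2\Lg t\Rg^{-1+O(\de_1)})$, uniformly bounded in $t$. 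In the thin transition strip $|r-t|\ls 1$, the key machinery $|\phi|\ls\varepsilon\Lg t-r\Rg/\Lg t+r\Rg^2$ from Lemma~\ref{lem3.7a} (cf.\ Remark~\ref{rem2.12}) furnishes the extra factor of $\Lg t-r\Rg$ needed to cancel the $\Lg t-r\Rg^{-1}$ degeneracy arising from Lemma~\ref{lem2.6}, while the residual $\Lg t+r\Rg^{-2}$ factor keeps the spacetime contribution integrable.

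\textit{Main obstacle.} The principal technical difficulty is precisely this transition region $|r-t|\sim 1$, where Lemma~\ref{lem2.6} becomes singular and the outside cutoff in \eqref{3.34b} has only just begun to engage. Bridging the two estimates across the strip requires the full strength of the Klein-Gordon machinery from Lemma~\ref{lem3.7a}---effectively trading $\Lg t-r\Rg$ weight on $\phi$ for a gain of $\Lg t+r\Rg^{-1}$---which is what ultimately converts the $t^{\de_1}$-lossy top-order bound of Lemma~\ref{lem3.1} into the $t$-uniform bound at order $K-2$.
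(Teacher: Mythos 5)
Your treatment of \eqref{3.34b} matches the paper's: same test function $\chi(\rho-t)\,\pa_t\Ga^J\td n$, same favorable sign observation, same pairing of \eqref{3.52b} (on the low-order factor, $\le K-7$) with \eqref{3.32} (high order) yielding the integrable rate $\Lg t\Rg^{-5/4+O(\de_1)}$, and the same appeal to the $O(\varepsilon^2)$ initial data from Remark~\ref{rem3.3}.

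Your proof of \eqref{3.34}, however, takes a genuinely different route and contains a gap that I do not think can be closed. The paper does not argue pointwise in time: it runs an \emph{energy estimate} on $\pa\Ga^J\td n$, multiplying $\Box\pa\Ga^J\td n=\sum\pa\Ga^{J_1}\phi\,\Ga^{J_2}\phi$ by $\pa_t\pa\Ga^J\td n$, splitting weights so that $\Lg\rho-t\Rg$ is donated to $\pa^2\Ga^J\td n$ (controlled by Lemma~\ref{lem3.7}, i.e.\ \eqref{3.59a}), $\frac{\Lg\rho+t\Rg}{\Lg\rho-t\Rg}$ to the high-order $\phi$ (controlled by Lemma~\ref{lem3.5}, i.e.\ \eqref{3.322}), leaving $\Lg\rho+t\Rg^{-1}$ and the $L^\infty$ decay of the low-order $\phi$, so that $\frac{d}{dt}\|\pa^2\Ga^J\td n\|_2^2\ls\varepsilon^2\Lg t\Rg^{-2+2\de_1}$, which integrates to a uniform bound.

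Your route instead tries to bound $\|\pa^2\Ga^{\le K-2}\td n\|_2$ \emph{for fixed $t$} via Lemma~\ref{lem2.6}, $\Lg t-r\Rg\,|\pa^2\Ga^{\le K-2}\td n|\ls|\pa\Ga^{\le K-1}\td n|+\Lg t+r\Rg|\Ga^{\le K-2}(|\phi|^2)|$, and divide by $\Lg t-r\Rg$. In the bulk $r\le t/2$ this works, since $\Lg t-r\Rg\sim\Lg t\Rg$. But on the remainder of the interior piece, $t/2\le r\le t+2$, the weight $\Lg t-r\Rg$ ranges down to $O(1)$, and dividing gains nothing: the first term produces
\[
\bigl\|\Lg t-r\Rg^{-1}\pa\Ga^{\le K-1}\td n\bigr\|_{L^2(t/2\le r\le t+2)}\;\ls\;\|\pa\Ga^{\le K-1}\td n\|_2\;\ls\;\varepsilon^2\Lg t\Rg^{\de_1}
\]
by \eqref{3.44e}, which grows in $t$. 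The Klein--Gordon machinery of Lemma~\ref{lem3.7a} that you invoke to ``bridge'' the transition strip only improves the $|\phi|^2$ source term; it has no bearing on the $|\pa\Ga^{\le K-1}\td n|$ term, which is precisely where your argument loses. The available weighted estimate, Lemma~\ref{lem3.7}, carries a $\Lg t-r\Rg$ weight, not the $\Lg t-r\Rg^{-1}$ weight your split requires, so it cannot be used either. Indeed, the very reason the paper resorts to an energy estimate for \eqref{3.34} (rather than a Lemma~\ref{lem2.6}-style pointwise argument, which \emph{is} used for the pointwise decay \eqref{4.2} in Section~\ref{sec:close}) is that a fixed-time $L^2$ bound near the light cone cannot be obtained by weight manipulation alone; it requires integrating in time to convert the $\de_1$-lossy top-order norms into a uniform constant.

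===
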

\begin{proof}
We show \eqref{3.34}. Firstly we recall that \eqref{3.9} and \eqref{3.31}, 
it then suffices to consider $\|\pa^2 \Ga^{\le K-3}\td{n}\|_2$. 
Let $J$ be a multi-index and $|J|=m\le K-2$. Then we have
\begin{align}\label{3.35}
\Box  \pa \Gamma^J \td{n}=\sum_{J_1+J_2=J}C_{J;J_1,J_2}\pa \Gamma^{J_1}\phi\Gamma^{J_2}\phi.
\end{align}
 Multiplying both sides of \eqref{3.35} by $\partial_t\pa\Gamma^J \td{n}$ and integrating, we obtain that
\begin{align}
\frac{d}{dt}\|\partial^2\Gamma^J \td{n}\|_{2}^2
\lesssim& \sum_{J_1+J_2= J}\int|\pa\Gamma^{J_1}\phi\Gamma^{J_2}\phi \partial_t\pa\Gamma^J \td{n}|
 \\ \lesssim & \Lg t\Rg^{-1}\|\Gamma^{\le [\frac{m}2]+1}\phi\|_{\infty}\left\|\frac{\Lg |\cdot|+t\Rg}{\Lg |\cdot|-t\Rg}\Gamma^{\le m+1}\phi\right\|_{2}
             \|\Lg |\cdot|-t\Rg  \partial^2\Gamma^J \td{n}\|_{2}
\\ \le& C\varepsilon^2 \Lg t\Rg^{-2+2\de_1}\qquad(\text{by \eqref{A3},\eqref{3.322} and \eqref{3.59a}}),
\end{align}
which shows \eqref{3.34} taking the initial condition {\eqref{3.44}} into account. We now proceed to show \eqref{3.34b}.
%Now we proceed to show \cmtb{\eqref{3.34b}}. 
Let $J$ be a multi-index and $|J|\le K-1$ and consider the following equation
\begin{align}\label{3.35b}
\Box  \Gamma^J \td{n}=\sum_{J_1+J_2= J}C_{J;J_1,J_2}\Gamma^{J_1}\phi\Gamma^{J_2}\phi.
\end{align}
Testing \eqref{3.35b} with $\chi(\rho-t)\pa_t \Ga^J \td{n}$ we arrive at (after integrating by parts and ignoring the negative parts)
\begin{align}
    \frac{d}{dt}\|\chi^{\frac12}(|\cdot|-t)\pa \Ga^J\td{n}\|_2^2\ls&\sum_{J_1+J_2=J}\int_{\R^2}|\chi(\rho-t)\Ga^{J_1} \phi \Ga^{J_2}\phi \pa_t \Ga^J \td{n}|\\
    \ls&\|\chi^{\frac12}(|\cdot|-t)\Ga^{\le [\frac{K-1}{2}]}\phi\|_{\infty}\|\Ga^{\le K-1}\phi\|_2\|\chi^{\frac12}(|\cdot|-t)\pa\Ga^J\td{n}\|_2\\
    \le& C\varepsilon^2 \Lg t\Rg^{-\frac54+\frac{3\de_1}{2}} \|\chi^{\frac12}(|\cdot|-t)\pa \Ga^J\td{n}\|_2,\quad \text{ by \eqref{3.52b}}.
\end{align}
\eqref{3.34b} thus follows from here by observing that $[\frac{K-1}{2}]\le  K-7$ by choosing $K\ge 12$.

\end{proof}

\begin{prop}[Uniform boundedness of the lower-order energy (Klein-Gordon)]
    \label{lem3.3}
Let $(\phi,n)$ be a couple of solutions to \eqref{eq:kg}. Assume \eqref{A2} and \eqref{A3} hold.
Then we have for some constant $C>0$,
\begin{align}\label{3.36}
\|\Gamma^{\le K-1}\phi\|_{2}+\|\partial\Gamma^{\le K-1}\phi\|_2\le C_i\varepsilon+C\varepsilon^2,
\end{align}
where $C_i>0$ is exact from the choice of the initial data.
\end{prop}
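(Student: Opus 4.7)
My plan is to adapt the wave-field ghost-weighted argument of Proposition~\ref{lem3.2} to the Klein-Gordon field. I will fix a multi-index $I$ with $|I|\le K-1$, set $\Phi=\Gamma^I\phi$, write the commuted equation
\begin{align*}
\Box\Phi+\Phi=\sum_{I_1+I_2=I}C^\alpha_{I;I_1,I_2}\,\partial_\alpha\Gamma^{I_1}n\cdot\Gamma^{I_2}\phi,
\end{align*}
and test against $e^{p}\partial_t\Phi$ (with $p=q(r-t)$ as in~\eqref{q}) to obtain the Alinhac identity
\begin{align*}
\frac{d}{dt}\bigl(\|e^{p/2}\partial\Phi\|_2^2+\|e^{p/2}\Phi\|_2^2\bigr)+\frac12\int e^{p}q'(|T\Phi|^2+|\Phi|^2)\,dx=\mathrm{RHS}.
\end{align*}
The goal will be to bound $\mathrm{RHS}$ so that Gronwall closes uniformly on $E(t):=\|\partial\Gamma^{\le K-1}\phi\|_2^2+\|\Gamma^{\le K-1}\phi\|_2^2$.

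I will split the sum on the RHS by whether $|I_2|\le [(K-1)/2]$ (Case A) or $|I_1|<|I_2|$ (Case B); since $K\ge 12$, the smaller index is always at most $K-7$. In Case A I will invoke the pointwise bound $|\Gamma^{I_2}\phi|\le C\varepsilon\langle t-r\rangle/\langle t+r\rangle^2$ from Lemma~\ref{lem3.7a}, and combine it with the weighted wave estimate $\|\langle t-|\cdot|\rangle\partial\Gamma^{\le K-1}n\|_2\le C\varepsilon\langle t\rangle^{\delta_1}$ of Lemma~\ref{lem3.7}; this yields a time-integrable contribution of the form $\le C\varepsilon^2\langle t\rangle^{-2+\delta_1}\sqrt{E(t)}$. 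In Case B I will decompose $\partial_\alpha\Gamma^{I_1}n=\partial_\alpha\Gamma^{I_1}n_0+A^\beta\partial_\alpha\partial_\beta\Gamma^{I_1}\tilde n$ and estimate each piece separately. For the $n_0$-piece I will use the sharpened homogeneous decay $|\partial\Gamma^{I_1}n_0|\le C\varepsilon\langle t\rangle^{-1/2}\langle r-t\rangle^{-3/2}$, a consequence of Lemma~\ref{lemB2} applied to $\Gamma^{I_1}n_0$ together with the $L^1$-moment bounds in~\eqref{ID3}; combined with Lemma~\ref{lem3.5}'s weighted Klein-Gordon estimate $\|\frac{\langle r+t\rangle}{\langle r-t\rangle}\Gamma^{\le K-1}\phi\|_2\le C\varepsilon\langle t\rangle^{\delta_1}$, the splitting $|\partial\Gamma^{I_1}n_0\,\Gamma^{I_2}\phi|\le \|\langle r-t\rangle\partial\Gamma^{I_1}n_0\|_\infty\cdot|\Gamma^{I_2}\phi|/\langle r-t\rangle$ followed by Cauchy-Schwarz produces $\le C\varepsilon^2\langle t\rangle^{-3/2+\delta_1}\sqrt{E(t)}$, again integrable in time. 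For the $\tilde n$-piece I will use the quadratic-in-$\varepsilon$ bound $|\partial^2\Gamma^{I_1}\tilde n|\le C\varepsilon^2\langle t\rangle^{-1/2}\langle r-t\rangle^{-(1+\delta)/2}$ (valid because $\tilde n$ has zero initial data and source $|\phi|^2$) and apply Cauchy-Schwarz against the ghost weight $q'$: the resulting $\int e^{p}q'|\Gamma^{I_2}\phi|^2$ term will be absorbed into the LHS ghost term $\int e^{p}q'|\Phi|^2$ after summing over~$I$, leaving a residual of size $\le \frac{C\varepsilon^4}{\langle t\rangle}E(t)$.

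Collecting the three contributions will yield a differential inequality of the form
\begin{align*}
\frac{d}{dt}E(t)\le\frac{C\varepsilon^2}{\langle t\rangle^{3/2-\delta_1}}\sqrt{E(t)}+\frac{C\varepsilon^4}{\langle t\rangle}E(t).
\end{align*}
Gronwall, together with the initial-data estimate $E(0)\le C_i^2\varepsilon^2$ (which I will derive exactly as in Remark~\ref{rem3.3} via Proposition~\ref{prop} of the Appendix, by iterating the Klein-Gordon equation to trade time derivatives for spatial ones), will then give $\sqrt{E(t)}\le C_i\varepsilon+C\varepsilon^2$ once $\varepsilon$ is chosen small enough that the Gronwall factor $\langle t\rangle^{C\varepsilon^4}$ contributes only a fixed multiplicative constant close to~$1$ absorbed into~$C$. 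The hardest step will be exactly this borderline $\frac{C\varepsilon^4}{\langle t\rangle}E(t)$ term coming from the $\tilde n$-interaction with a top-order Klein-Gordon factor: its coefficient fails to be time-integrable, and only the extreme $O(\varepsilon^4)$ smallness afforded by the decomposition $n=n_0+A^\alpha\partial_\alpha\tilde n$ and the zero initial data of $\tilde n$ (which forces $\partial^2\tilde n=O(\varepsilon^2)$) will make uniform closure possible.
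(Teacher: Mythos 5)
There is a genuine gap, and it sits exactly where you flag the ``hardest step.'' Your treatment of the $\tilde n$-interaction with a top-order Klein--Gordon factor leaves a term $\frac{C\varepsilon^4}{\langle t\rangle}E(t)$ (in fact, at this stage of the bootstrap only $\frac{C\varepsilon^2}{\langle t\rangle}E(t)$, see below), and such a term cannot be closed to a \emph{uniform} bound: Gronwall gives $E(t)\lesssim \varepsilon^2\exp\bigl(C\varepsilon^4\log\langle t\rangle\bigr)=\varepsilon^2\langle t\rangle^{C\varepsilon^4}$, which tends to infinity no matter how small $\varepsilon$ is. The claim that $\langle t\rangle^{C\varepsilon^4}$ ``contributes only a fixed multiplicative constant'' is false for a global-in-time statement; with this argument you only recover the polynomial-growth conclusion of Lemma~\ref{lem3.1}, not \eqref{3.36}. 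There is also a circularity problem: the bound $|\partial^2\Gamma^{I_1}\tilde n|\le C\varepsilon^2\langle t\rangle^{-1/2}\langle r-t\rangle^{-(1+\delta)/2}$ you invoke is precisely \eqref{3.3}, which in the paper is established in Section~\ref{sec:close} \emph{using} Proposition~\ref{lem3.3} (it enters through \eqref{3.36} and \eqref{3.52b} in the estimate of $B_2$); inside the present proof you may only use the bootstrap hypothesis \eqref{A2}, whose constant is $C\varepsilon$, so the borderline coefficient is really $\varepsilon^2/\langle t\rangle$ and the zero-initial-data remark does not by itself upgrade it pointwise.

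The paper avoids the ghost-weight absorption entirely in this case. For the high Klein--Gordon-factor terms it writes $\int|\partial\Gamma^{I_1}n\,\Gamma^{I_2}\phi\,\partial_t\Gamma^I\phi|\lesssim \langle t\rangle^{-1}\,\|\langle\rho-t\rangle\,\partial\Gamma^{\le[\frac{K-1}{2}]}n\|_\infty\,\bigl\|\tfrac{\langle\rho+t\rangle}{\langle\rho-t\rangle}\Gamma^{\le K-1}\phi\bigr\|_2\,\|\partial_t\Gamma^I\phi\|_2$, pairing the full weight $\langle\rho-t\rangle$ on the wave factor against the ratio $\tfrac{\langle\rho+t\rangle}{\langle\rho-t\rangle}$ on the Klein--Gordon factor (Lemma~\ref{lem3.5}, \eqref{3.322}), which buys a clean factor $\langle t\rangle^{-1}$. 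The weighted sup norm is then estimated by \eqref{A2} in the region $r\le t/2$ (giving $\langle t\rangle^{-\delta/2}$) and by the radial Sobolev inequality \eqref{t12b} together with the weighted energy \eqref{3.59a} in the region $r\ge t/2$ (giving $\langle t\rangle^{-1/2+\delta_1}$), so the total rate is $\langle t\rangle^{-1-\delta/2+\delta_1}+\langle t\rangle^{-3/2+2\delta_1}$, integrable since $\delta_1<\delta/4$; no Gronwall absorption and no $n_0$/$\tilde n$ splitting are needed, and a plain (unweighted) energy identity suffices. Your Case~A and your $n_0$-piece are fine and essentially coincide with the paper's Case~1 and with the exterior homogeneous decay, but the $\tilde n$-piece must be reworked along these lines (i.e.\ estimate $\|\langle\rho-t\rangle\partial^2\Gamma^{\le[\frac{K-1}{2}]}\tilde n\|_\infty$ and pair it with \eqref{3.322}) before the uniform bound \eqref{3.36} follows.
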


\begin{proof}
The proof basically follows from a usual energy estimate. Let $I$ be a multi-index with $|I|\le K-1$, we arrive at 
\begin{align}\label{3.37}
    \Box \Ga^I \phi =\sum_{I_1+I_2=I} C^\al_{I;I_1,I_2}\pa_\al\Ga^{I_1}n\Ga^{I_2}\phi.
\end{align}
Taking the $L^2$ inner product of the equation \eqref{3.37} with $\pa_t \Ga^I \phi$, we again get:
\begin{align}
    \frac{d}{dt}(\| \pa \Ga^I \phi\|_2^2+\|\Ga^I \phi\|_2^2)\ls&  \sum_{I_1+I_2=I}\int |\pa \Ga^{I_1}n\Ga^{I_2}\phi \pa_t\Ga^I \phi|.
\end{align}
To continue we again consider two cases:

\texttt{Case 1:} $|I_1|\ge |I_2|$.
\begin{align}
\text{RHS}\lesssim &\| \Lg \rho-t\Rg \pa\Gamma^{\le|I|}n\|_{2}\|\Lg \rho-t\Rg^{-1}\Gamma^{\le [\frac{|I|}2]}\Phi\|_{\infty}\|\partial_t\Ga^I\phi\|_{2}
\\ \le &C\varepsilon^2 t^{-2+\de_1}\|\pa_t\Ga^{I}\phi\|_2, \qquad \text{ by \eqref{3.52} and \eqref{3.59a}}.
\end{align}
Note that the estimate above is guaranteed by {$[\frac{K-1}{2}]\le K-7$} since $K\ge 12$.

\texttt{Case 2:} $|I_1|\le |I_2|$.
\begin{align}
\text{RHS}\lesssim \Lg t\Rg^{-1}\|\Lg \rho-t\Rg  \pa\Gamma^{\le[\frac{|I|}{2}]}n\|_{\infty}\left\|\frac{\Lg \rho+t\Rg}{\Lg \rho-t\Rg}\Gamma^{\le |I| }\Phi\right\|_{2}\|\partial_t\Ga^I\phi\|_{2}.
\end{align}
To proceed further, we notice that when $|x|\le \frac{t}{2}$ we have from \eqref{A2} that
\begin{align}\label{3.84a}
    |\Lg |x|-t\Rg \pa \Ga^{\le[\frac{K-1}{2}]}n| \ls \Lg t\Rg|\pa \Ga^{\le[\frac{K-1}{2}]}n|\ls\Lg t\Rg^{-\frac{\de}{2}} ,\qquad \text{ by \eqref{A2}}.
\end{align}
For the case $|x|\ge \frac{t}{2}$, one recall that by \eqref{t12b}
\begin{align}\label{3.84b}
|\Lg |x|-t\Rg  \pa\Gamma^{\le[\frac{K-1}{2}]}n| \ls& \Lg t\Rg^{-\frac12}\|\pa_{\rho}^{\le 1}(\Lg \rho-t\Rg \pa \pa_{\theta}^{\le1}\Ga^{\le [\frac{K-1}{2}]} n) \|_2\qquad \text{ by \eqref{t12b}}\\
\ls&\Lg t\Rg^{-\frac12}\|\Lg \rho-t\Rg\pa \Ga^{\le[\frac{K-1}{2}]+2}n\|_2\\
\ls& \Lg t\Rg^{-\frac12+\de_1},\qquad \text{ by \eqref{3.59a}}.
    %\\ \le &C\varepsilon \cmt{t^{-\frac{3}{2}+2\de_1}\|\pa_t\Ga^{I}\phi\|_2, \qquad \text{ by \eqref{A2} and \eqref{3.322}} what if |\pa n|\ls \Lg t\Rg^{-\frac12}?}
\end{align}
Combining the estimates in \eqref{3.84a} and \eqref{3.84b} we arrive at
\begin{align}
    \text{RHS}\le C\varepsilon^2 (\Lg t\Rg^{-1-\frac{\de}{2}+\de_1}+\Lg t\Rg^{-\frac32+2\de_1})\|\pa_t\Ga^I\phi\|_2,\qquad \text{ by \eqref{3.322}}.
\end{align}
We then conclude \eqref{3.36} from here. Note that we in fact have {$\frac{K-1}{2}+2\le K-1$ and $\frac{K-1}{2}\le K-5$} here and $\de_1<\frac{\de}{4}$ by definition. We therefore obtain \eqref{3.36}.
\end{proof}

\section {pointwise decay-in-time and scattering}\label{sec:close}
In this section we complete the proof of Theorem~\ref{thm} and Theorem~\ref{thm1} by showing the $a\ priori$ hypothesis and scattering. We first prove the $a\ priori$  assumptions:

\subsection{Assumption 1: wave component \eqref{A2}}
Recall $n=n_0+A_\al\pa_\al\td{n}$, then given multi-index $|\alpha|\le K-5$ and $|x|=r$. Unlike \eqref{A2} we propose to show the following instead:
\begin{align}
|\pa\Ga^\al n_0|+|\pa^2\Gamma^\alpha \td{n}|\le C\varepsilon \langle r-t\rangle^{-\frac{1+\de}{2}}\langle t+r\rangle^{-\frac12}.\label{4.2}
\end{align} 
Note that from Lemma~\ref{lemB2} and the initial condition \eqref{ID2}-\eqref{ID3} we have even
\begin{align}
    |\pa\Ga^\al n_0|\le C_i\varepsilon\Lg r-t\Rg^{-\frac32}\Lg t\Rg^{-\frac12}\le C_i\varepsilon\Lg t-r\Rg^{-1}\Lg t+r\Rg^{-\frac12},
\end{align}
where the constant $C_i>0$ results from the initial condition and should be clear in the context. Then by recalling \eqref{3.31} it suffices for us to show that 
\begin{align}\label{3.3}
|\partial^2 \Gamma^{\le |\alpha|} \td{n}|
\le C\varepsilon^2\langle r-t\rangle^{-\frac{1+\de}{2}}\langle t+r\rangle^{-\frac12}.
\end{align}
For this purpose, we assume that $t>2$ with no loss. Recall that \eqref{2.260}: $|\langle r-t\rangle\partial^2 u|\le |\partial \Gamma^{\le 1}u |+(t+r)|\Box u|$ for all $r>0$. We then consider two cases.

{\bf Case 1: $r\le \frac{t}{2}$ or $r>2t$}.
Noting that $\Box \td{n}=|\phi|^2$, we arrive at
\begin{align}
|\langle r-t\rangle \partial^2\Gamma^{\alpha} \td{n}|\lesssim &| \partial\Gamma^{\le |\alpha|+1} \td{n}|+(t+r)|\Box  \Gamma^{\alpha}\td{n}|
\\ \lesssim &\ \Lg t+r\Rg^{-\frac12}{\|\partial \Gamma^{\le |\alpha|+4} \td{n}\|_{2}}+\|\langle t+|\cdot|\rangle \Gamma^{\le [\frac {|\alpha|} 2]}\phi\|_{\infty}\|\Gamma^{\le |\alpha|}\phi \|_{\infty}\qquad (\text{by }\eqref{t12},\eqref{t12b})
\\ \lesssim &\ \varepsilon^2 \Lg t+r\Rg^{-\frac12}t^{\de_1}+\varepsilon^2\Lg t+r\Rg^{-1}\qquad(\text{by } \eqref{A3}).\label{3.4}
\end{align}
Then by \eqref{3.44e}, we obtain by taking $0<\de<\frac12$ and $\de_1<\frac{\de}{4}$ that
\begin{equation}
\label{3.4a}
\begin{aligned}
    |\pa^2\Ga^\al \td{n}|\lesssim &\varepsilon^2 \Lg t-r\Rg^{-\frac{1+\de}{2}} \Lg t-r\Rg^{-\frac{1-\de}{2}}\Lg t+r\Rg^{-\frac12}\Lg t\Rg^{\de_1}\quad (\mbox{by }\eqref{3.4})\\
    \lesssim&\varepsilon^2\Lg r-t\Rg^{-\frac{1+\de}{2}}\Lg r+t\Rg^{-\frac12}.
\end{aligned}   
\end{equation}
% {\color{blue}Then we can assume \eqref{A2}: $|\Gamma^{\le k_1}n|\le (C\varepsilon)^2\langle r-t\rangle^{-\frac12}\langle r+t\rangle^{-1}$.}

{\bf Case 2: $\frac t2\le r\le 2t$}. If $|t-r|\le 1$, it follows from \eqref{t12b} that
\begin{align}
    \Lg t-r\Rg^{1+\de} t |\pa^2\Ga^\al \td{n}|^2\lesssim t|\pa^2\Ga^\al\td{n}|^2\lesssim\|\Ga^{\le 1}\pa^2\Ga^\al \td{n}\|_2^2+\|\pa\Ga^{\le 1}\pa^2 \Ga^\al \td{n}\|_2^2\ls \|\pa^2 \Ga^{\le K-3}\td{n}\|_2^2.
\end{align}
Then by \eqref{3.34} we can conclude
\begin{equation}
\begin{aligned}
    |\pa^2\Ga^\al \td{n}|\le C \varepsilon^2\Lg r-t\Rg^{-\frac{1+\de}{2}}\Lg r+t\Rg^{-\frac12}.
\end{aligned}   
\end{equation}
It then remains to show \eqref{3.3} for the case when $|t-r|\ge 1$ and $\frac{t}{2}\le r\le 2t$. To begin with,  it follows by \eqref{t1} that
\begin{align}
&\langle r-t\rangle^{\frac{1+\de}{2}} t^{\frac12}|\partial^2 \Gamma^{\alpha} \td{n}|
 \\\lesssim& \| \Lg t-|\cdot|\Rg^{\frac{1+\de}{2}}\pa_r^{\le 1}\pa_\theta^{\le 1}\partial^2\Gamma^{\al } \td{n}\|_{2}%\qquad\text{(by \eqref{t1})}
 \\ \ls &\|\chi^{\frac12}(|\cdot|-t)\Lg t-|\cdot|\Rg^{\frac{1+\de}{2}} \pa^2 \Ga^{\le |\al|+2}\td{n}\|_2+\|(1-\chi^{\frac12}(|\cdot|-t))\Lg t-|\cdot|\Rg^{\frac{1+\de}{2}} \pa^2 \Ga^{\le |\al|+2}\td{n}\|_2\\
 \coloneqq& B_1+B_2.
%\\ \lesssim &\cmtr{\|\partial\Gamma^{\le |\alpha|+3}\td{n}\|_{2}}+\|\langle t+|\cdot|\rangle\Box\Gamma^{\le |\alpha|+2}\td{n}\|_{2},\qquad \text{(by \eqref{2.260})}.
%\\ \lesssim &\  \varepsilon^2+\|\langle t+|\cdot|\rangle\Box\Gamma^{\le |\alpha|+3}n^{\Delta}\|_{2}\qquad \text{(by \eqref{3.34})}.
\end{align}
Note that we can estimate $B_1$ as follows:
\begin{align}
    B_1\lesssim& \|\chi^{\frac12}(|\cdot|-t)\Lg t-|\cdot|\Rg \pa^2 \Ga^{\le K-3}\td{n}\|_2 \\
    \ls & \| \chi^{\frac12}(|\cdot|-t)\pa \Ga^{\le K-2}\td{n}\|_2+ \| \chi^{\frac12}(|\cdot|-t)
    (t+|\cdot|)\Box \Ga^{\le K-3}\td{n}\|_2\le C\varepsilon^2,    \quad \text{ by \eqref{3.34b} and \eqref{3.52b}}.
\end{align}
To control $B_2$, we notice that 
\begin{align}
    (1-\chi^{\frac12}(\rho-t))^2\ls 1-\chi(\rho-t)\ls \chi(t-\rho)+\eta(\rho,t),
\end{align}
where the cut-off function $\eta(\rho,t)$ is chosen to be supported in $\{|\rho-t|\le 2\}$ and its value is 1 in the region $\{|\rho-t|\le 1\}$. As a result, it follows that
\begin{align}
    B_2\ls& \|(1-\chi(\rho-t))^{\frac12}\Lg \rho-t\Rg^{-\frac{1}{2}+\frac{\de}{2}}\Lg \rho-t\Rg \pa^2\Ga^{\le K-3}\td{n}\|_2\\
\ls& \|\chi^{\frac12}(t-\rho)\Lg t-\rho\Rg^{-\frac{1}{2}+\frac{\de}{2}}\Lg t-\rho\Rg \pa^2\Ga^{\le K-3}\td{n}\|_2+\|\eta(\rho,t)\pa^2\Ga^{\le K-3}\td{n}\|_2\\
\ls&\varepsilon^2+\|\chi^{\frac12}(t-\rho)\Lg t-\rho\Rg^{-\frac{1}{2}+\frac{\de}{2}}\Lg t-\rho\Rg \pa^2\Ga^{\le K-3}\td{n}\|_2, \qquad \text{ by \eqref{3.34}}\\
    \ls& \varepsilon^2+\|\chi^{\frac12}(t-\rho)\Lg t-\rho\Rg^{-\frac12+\frac{\de}{2}} \pa\Ga^{\le K-2}\td{n}\|_2+\|\chi^{\frac12}(t-r)\Lg t-r\Rg^{-\frac12+\frac{\de}{2}} \Box \Ga^{\le K-3}\td{n}\|_2,\qquad \text{ by \eqref{2.260}}\\
    \ls&\varepsilon^2+\|\chi^{\frac12}(t-\rho)\Lg t-\rho\Rg^{-\frac12+\frac{\de}{2}} \pa\Ga^{\le K-2}\td{n}\|_2+\|\chi^{\frac12}(t-\rho)\Lg t-\rho\Rg^{-\frac12+\frac{\de}{2}}  \Ga^{\le [\frac{K-3}{2}]}\phi\|_\infty\|\Ga^{\le K-3}\phi\|_2\\
    \ls& \varepsilon^2 +\|\chi^{\frac12}(t-\rho)\Lg t-\rho\Rg^{-\frac12+\frac{\de}{2}} \pa\Ga^{\le K-2}\td{n}\|_2, \qquad \text{ by \eqref{A3} and \eqref{3.36},}
\end{align}
where the requirement {$[\frac{K-3}{2}]\le K-5$}  is guaranteed by our choice of $K$. It then suffices to control $\|\chi^{\frac12}(t-|\cdot|)\Lg t-|\cdot|\Rg^{-\frac12+\frac{\de}{2}} \pa\Ga^{\le K-2}\td{n}\|_2$. Letting $\beta$ be a multi-index and $|\beta|\le K-2$, we get
\begin{align}
\Box\Gamma^{\beta}\td{n}=\sum_{\beta_1+\beta_2= \beta}C_{\beta;\beta_1,\beta_2}\Gamma^{\beta_1}\phi\Gamma^{\beta_2}\phi.
\end{align}
Taking the test function $\left[\chi(t-\rho)\Lg t-\rho\Rg^{-1+\de}+1-\chi(t-\rho)\right]\pa_t \Ga^\be \td{n}$, it leads to that
\begin{align}
 \text{LHS}=&\frac12\frac{d}{dt}\int_{\R^2}\left[\chi(t-\rho))\Lg t-r\Rg^{-1+\de}+1-\chi(t-\rho)\right]|\pa \Ga^\be \td{n}|^2\\&+\frac12\int_{\R^2}\left(\chi'(t-\rho)-\chi'(t-\rho)\Lg t-\rho\Rg^{-1+\de}+(1-\de)\Lg t-\rho\Rg^{-2+\de}(t-\rho)\chi(t-\rho)\right)|T\Ga^{\be}\td{n}|^2;\\
 \text{RHS}\ls&\int_{\R^2}\left[\chi(t-\rho)\Lg t-\rho\Rg^{-1+\de}+1-\chi(t-\rho)\right]|\Ga^{\le K-2}\phi \Ga^{\le [\frac{K-2}{2}]}\phi \pa_t\Ga^{\le K-2}\td{n}|.
\end{align}
To proceed now we split RHS into 3 parts as follows:
\begin{align}
  \chi(t-\rho)\Lg t-\rho\Rg^{-1+\de}+1-\chi(t-\rho)\le \chi(t-\rho)\Lg t-\rho\Rg^{-1+\de}+\eta(t,\rho)+\chi(\rho-t).
\end{align}
It then follows that $\text{RHS}\ls D_1+D_2+D_3$, where
\begin{align}
D_1\coloneqq& \int_{\R^2}\chi(t-\rho)\Lg t-\rho\Rg^{-1+\de}|\Ga^{\le K-2}\phi \Ga^{\le [\frac{K-2}{2}]}\phi \pa_t\Ga^{\le K-2}\td{n}|\\
\ls& \left\|\Lg t-\rho\Rg^{\de}\frac{\Ga^{\le [\frac{K-2}{2}]}\phi}{\Lg t-\rho\Rg}\right\|_{\infty}\|\Ga^{\le K-2}\phi\|_2\|\pa\Ga^{\le K-2}\td{n}\|_2\\
\le& C\varepsilon^2\Lg t\Rg^{-2+\de+2\de_1},\qquad \text{ by \eqref{3.52}, \eqref{3.32} and \eqref{3.44e}}.
\end{align}
Similarly, we have
\begin{align}
    D_2\coloneqq&\int_{\R^2}\eta(t,\rho)|\Ga^{\le K-2}\phi \Ga^{\le [\frac{K-2}{2}]}\phi \pa_t\Ga^{\le K-2}\td{n}|\\ \ls& \left\|\frac{\Ga^{\le [\frac{K-2}{2}]}\phi}{\Lg t-\rho\Rg}\right\|_{\infty}\|\Ga^{\le K-2}\phi\|_2\|\pa\Ga^{\le K-2}\td{n}\|_2\\
    \le& C\varepsilon^2 \Lg t\Rg^{-2+2\de_1},\qquad \text{ by \eqref{3.52}, \eqref{3.32} and \eqref{3.44e}};
\end{align}
and we have
\begin{align}
    D_3\coloneqq&\int_{\R^2}\chi(\rho-t)|\Ga^{\le K-2}\phi \Ga^{\le [\frac{K-2}{2}]}\phi \pa_t\Ga^{\le K-2}\td{n}|\\
    \ls&\| \chi^{\frac12}(\rho-t)\Ga^{\le [\frac{K-2}{2}]}\phi\|_{\infty} \|\Ga^{\le K-2}\phi\|_2\|\pa\Ga^{\le K-2}\td{n}\|_2\\
    \le&C\varepsilon^2\Lg t\Rg^{-\frac54+\frac{\de_1}{2}+\de_1}, \qquad \text{ by \eqref{3.52b},\eqref{3.32} and \eqref{3.44e}}.
\end{align}
Here our requirements {$[\frac{K-2}{2}]\le K-7$ is guaranteed by choosing $K \ge 12$.   
Observing that 
\begin{align}
    \chi'(t-\rho)-\chi'(t-\rho)\Lg t-\rho\Rg^{-1+\de}+(1-\de)\Lg t-\rho\Rg^{-2+\de}(t-\rho)\chi(t-\rho)\ge 0
\end{align}
and the assumptions on the initial data 
\begin{align}
    \int_{\R^2}\left[\chi(t-\rho))\Lg t-\rho\Rg^{-1+\de}+1-\chi(t-\rho)\right]|\pa \Ga^{\le K-2} \td{n}|^2\ dx\Big|_{t=0}\le C_i\varepsilon^2,
\end{align}
we get the desired estimate $B_2\le C\varepsilon^2$ by noting 
\begin{align}
    \|\chi^{\frac12}(t-\rho)\Lg t-\rho\Rg^{-\frac12+\frac{\de}{2}}\pa\Ga^{\le K-2}\td{n}\|^2_2\ls \int_{\R^2}\left[\chi(t-\rho))\Lg t-\rho\Rg^{-1+\de}+1-\chi(t-\rho)\right]|\pa \Ga^{\le K-2} \td{n}|^2.
\end{align}
%This implies that
%\begin{align}
%\|\langle t+|\cdot|\rangle \Box\Gamma^{\le |\alpha|+3}n^{\Delta}\|_{2}\lesssim \|\Gamma^{\le |\alpha|+3}\phi\|_{2}\|\langle t+|\cdot|\rangle \Gamma^{\le [\frac{|\alpha|+3}2]}\phi\|_{\infty}
%\le (C\varepsilon)^2t^{-\frac12} 
%\qquad \text{(by \eqref{3.32} and \eqref{A3})}.
%\end{align}
%It follows that 
%\begin{align}
%\langle r-t\rangle^{\frac12} t|\partial \nabla \Gamma^{\alpha} n^{\Delta}|
% \le (C\varepsilon)^2,\qquad \text{for }r\ge\frac t2.
%\end{align}
Thus we have proved \eqref{3.3} and \eqref{4.2}. Then it suffices to choose {$C_1>C_{i}+\widetilde {C}_{n}\varepsilon$ in \eqref{A2}}, where $C_i$ is the exact known constant from the initial condition {(cf. \eqref{B1}-\eqref{B4}).}

\subsection{Assumption 2: Klein-Gordon component \eqref{A3}}

Let $\beta$ be a multi-index and {$|\beta|\le K-5$}. Then  $\Gamma^{\beta}\phi$ satisfies
% Recall the equation of $\Gamma^{\alpha}\phi$ 
\begin{equation}
\begin{cases}
\Box \Gamma^{\beta}\phi +\Gamma^{\beta}\phi=-\sum_{\beta_1+\beta_2=\beta}C_{\beta;\beta_1,\beta_2}\pa\Gamma^{\beta_1}n\Gamma^{\beta_2}\phi=:\widetilde F,\\
(\phi,\partial_t\phi)|_{t=0}=(\Phi_0,\Phi_1).
\end{cases}
\end{equation}
By \eqref{d-1}, we have 
\begin{align}\label{4.38}
\langle t+|x|\rangle|\Gamma^{\beta}\phi(t,x)|\lesssim {\sup_{0<s\le t}}\Lg s\Rg^{\de'}\|\langle s+|\cdot|\rangle\Gamma^{\le 4}\widetilde F(s,\cdot)\|_{2}+\|\langle |\cdot|\rangle\log\Lg |\cdot|\Rg \Gamma^{\le 5}\Gamma^{\beta}\phi(0,\cdot)\|_{2},
\end{align}
for some $0<\delta' \ll 1$.
For any $s\in(0,t)$ we have
\begin{align}
&\|\langle s+|\cdot|\rangle\Gamma^{\le 4}\widetilde F(s,\cdot)\|_{2}
\\ \lesssim& \| \Lg s-|\cdot|\Rg \pa \Gamma^{\le |\beta|+4}n\|_{2}\left\|\frac{\langle s+|\cdot|\rangle}{\Lg s-|\cdot|\Rg}\Gamma^{\le [\frac{|\beta|+4}2]}\phi\right\|_{\infty}+\|\Lg s-|\cdot|\Rg \pa\Gamma^{\le [\frac{|\beta|+4}2]}n\|_{\infty}\left\|\frac{\Lg s+|\cdot|\Rg}{\Lg s-|\cdot|\Rg}\Gamma^{\le |\beta|+4}\phi\right\|_{2}\\
\coloneqq& G_1+G_2.
%\\\lesssim &\ \varepsilon^2 +\|\langle s+|\cdot|\rangle^{1+\delta}\Gamma^{\le [\frac{|\beta|+4}2]}n\Gamma^{\le |\beta|+4}\phi\|_{L^2}
 %\qquad \text{(by \eqref{3.59a} \eqref{A3} and \eqref{3.32})}.\label{3.59}
\end{align}
{Note that we have
\begin{align}
    G_1\le C\varepsilon^2 \Lg s\Rg^{-1+\de_1}\qquad \text{(by \eqref{3.52} and \eqref{3.59a})};
\end{align}
on the other hand, $G_2$ can be handled as follows:
\begin{align*}
    G_2\ls&\Lg s\Rg^{\de_1}\|\Lg s-|\cdot|\Rg \pa\Gamma^{\le [\frac{|\beta|+4}2]}n\|_{\infty},\quad \text{ by \eqref{3.322}},\\
    \ls &\Lg s\Rg^{\de_1}(\| \Lg s-|\cdot|\Rg \pa\Ga^{\le [\frac{K-1}{2}]}n_0\|_{\infty}+\| \Lg s-|\cdot|\Rg\pa^2\Ga^{\le [\frac{K-1}{2}]}\td{n}\|_{\infty}),\\
    \le& C_i\varepsilon\Lg s\Rg^{-\frac12+\de_1}+ C\Lg s\Rg^{\de_1}\left(\|\pa \Ga^{\le[\frac{K-1}{2}]+1 }\td{n}\|_\infty+\|\Lg s+|\cdot|\Rg\Ga^{\le[\frac{K-1}{2}]}\phi\Ga^{\le[\frac{K-1}{4}]+1}\phi\|_\infty\right)
    ,%\quad 
    %text{ by \eqref{BB3} and \eqref{2.260},}
    \\ \le&C_i\varepsilon\Lg s\Rg^{-\frac12+\de_1}+C\varepsilon^2\Lg s\Rg^{-\frac12+2\de_1}+C\varepsilon^2\Lg s\Rg^{-1+\de_1},\quad 
    \text{ by \eqref{t12},\eqref{t12b}, \eqref{3.44e} and \eqref{A3}.}
\end{align*}}
Here the constant $C$ may vary from line to line and we need the index {$[\frac{K-1}{2}]\le K-7$,which is true since $K \ge 12$. %$\frac{K-1}{2}+4\le K$, which is true since $K \ge 12$.}
It then follows by taking $\de'=\frac{1}{2}-2\de_1$ and by \eqref{4.38} that 
\begin{align}
\langle t+r\rangle^{-1}|\Gamma^{\beta}\phi(t,x)|\le& C\varepsilon^2+
\|\langle |\cdot|\rangle\log\Lg |\cdot|\Rg \Gamma^{\le 5}\Gamma^{\beta}\phi(0,\cdot)\|_{2}
\\ \\ \le& C_i\varepsilon+\widetilde C_{\phi}\varepsilon^2, \qquad\text{(by \eqref{ID2} and \eqref{B3}).}
\end{align}
% {\color{red}
% Here we need $\|\langle |\cdot|\rangle^2\Gamma^{\le k+4}\phi(0,\cdot)\|_{2}\le \sum_{j=0}^{K-1}\|\langle x\rangle^{j+3}\nabla^j(\nabla\Phi_0, \Phi_1,\nabla n_0,n_1)\|_{2}$.
% }
Then it suffices to choose $C_2>{C_i}+\widetilde {C}_{\phi}\varepsilon$ in \eqref{A3},
where $C_i$ is the known constant from the initial condition {(cf. \eqref{B1}-\eqref{B4}).}
This concludes the proof of Theorem~\ref{thm}.

\subsection{Scattering}\label{scat}
In this subsection, we briefly discuss about the scattering of the wave-Klein-Gordon system \eqref{eq:kg} in $\R^2$. We show that the Klein-Gordon field $\phi$ scatters to a linear Klein-Gordon
equation in its energy space as $t\to+\infty$ and the wave field $n$ scatters ``weakly''. We now proceed to show the scattering of $\phi$ in the energy space {$\mathcal{X}_{K}= H^{K}\times H^{K-1} $ and $\mathcal{Y}_{K}= \dot{H}^{K}\times \dot{H}^{K-1}\cap \dot{H}^1\times L^2 $  } , namely there exists a couple of solutions $(\phi_l,n_l)$ to the linear system such that \eqref{Phiscat}, \eqref{Nscat1} and \eqref{Nscat2} hold:
\begin{align*}
   & \lim_{t\to \infty}\|(\phi,\partial_t\phi)-(\phi_l,\partial_t\phi_l)\|_{\mathcal{X}_{K}}= 0,\\
    &\lim_{t\to + \infty}\int_{\R^2} \na D^{K-1} (n-n_l)\cdot \na D^{K-1} f+\pa_t D^{K-1}(n-n_l)\cdot \pa_tD^{K-1} f=0,\\ 
    &\lim_{t\to + \infty}\int_{\R^2}\na(n-n_l)\cdot \na f+\pa_t(n-n_l)\pa_tf=0,
\end{align*}
where $(\phi,\partial_t\phi,n,\pa_t n)|_{t=0}=(\Phi_0,\Phi_1,N_0,N_1)$ and $(\phi_l,\partial_t\phi_l,n_l,\pa_t n_l)|_{t=0}=(\Phi_{l_0},\Phi_{l_1},N_{l_0},N_{l_1}) $ for some $(\Phi_{l_0},\Phi_{l_1},N_{l_0},N_{l_1})\in\mathcal{X}_{K}\times \mathcal{Y}_{K}$. Assume that $f$ is the linear homogeneous solution to the wave equation with arbitrary initial data $(F_0,F_1)\in \mathcal{Y}_{K}$; here we denote
$D=(D_i)_{i=1}^2$ to be the usual differential operator with $D_1=\pa_1,D_2=\pa_2$. 

\begin{proof}[Proof of Theorem~\ref{thm1}]
We show the linear scattering of $\phi$ and $n$ separately. To start with, we consider the Klein-Gordon field $\phi$.

\texttt{Linear scattering of the Klein-Gordon component.}
Recall that for the Cauchy problem for the inhomogeneous 2d Klein-Gordon equation:
\begin{align}
    \Box \phi+\phi = H\, ,\ \mbox{with } (\phi,\partial_t \phi)|_{t=0}=(\Phi_0,\Phi_1),
\end{align}
the solution $\vec{\phi}=(\phi,\partial_t \phi)$ is given by
\begin{align}
    \vec{\phi}(t,x)=N(t)\begin{pmatrix}
\Phi_0\\
\Phi_1
\end{pmatrix}+\int_0^t N(t-s)\begin{pmatrix}
0\\
H(s)
\end{pmatrix}\ ds,
\end{align}
where 
\begin{align}
    N(t)=\begin{pmatrix}
\cos(t \Lg\na\Rg) & \Lg \na\Rg^{-1}\sin(t\Lg\na\Rg)\\
-\Lg\na\Rg\sin(t\Lg\na\Rg) & \cos(t\Lg\na\Rg)
\end{pmatrix}.
\end{align}
More specifically $H(n,\phi)=\phi B^\al\pa_\al n$ in our setting.
 By a standard semi-group argument, it is known that the Klein-Gordon flow $N(t)$ is a unitary semi-group on $H^{J}\times H^{J-1}$ for $J\in \mathbb{N}^+$. More precisely one can verify on the Fourier side that for any $J\in \mathbb{N}^+$ one has
 \begin{align}
\|N(t)\|_{\mathcal{H}^J\to\mathcal{H}^J }=1,
 \end{align}
where $\mathcal{H}^J=H^J\times H^{J-1}$. Moreover, one recalls that 
\begin{align}
  \begin{pmatrix}
        \phi_l\\
        \partial_t\phi_l
    \end{pmatrix}=N(t)
    \begin{pmatrix}
        \Phi_{l_0}\\
        \Phi_{l_1}
    \end{pmatrix}.
\end{align}
Note that by \eqref{3.32}-\eqref{3.34}, we have 
\begin{align}
    &\|H(\phi,n)\|_{H^{K-1}}\ls \|\phi \pa n \|_{H^{K-1}}\\
    \ls& \|\Lg t-\rho\Rg \pa \Ga^{\le K-1}n\|_{2}\left\|\frac{\Ga^{\le [\frac{K-1}{2}]}\phi}{\Lg t-\rho\Rg}\right\|_{\infty}+\Lg t\Rg^{-1}\|\Lg t-\rho\Rg\pa\Ga^{\le [\frac{K-1}{2}] }n\|_{\infty}\left\| \frac{\Lg t+\rho\Rg}{\Lg t-\rho\Rg}\Ga^{\le K-1}\phi\right\|_2  \\
    \le &C\varepsilon^2(\Lg t\Rg^{-2+\de_1}+\Lg t\Rg^{-\frac32+2\de_1}),\qquad \text{ by \eqref{3.59a}, \eqref{3.52}, \eqref{t12}-\eqref{t12b} and \eqref{3.322}}.
\end{align}
Note that we have {[$\frac{K-1}{2}]\le K-7$}. Therefore it follows immediately that
\begin{align}
    \int_0^\infty \|H(\phi,n)\|_{H^{K-1}}\le C\varepsilon^2.
\end{align}
In particular, we set
\begin{align}
    \begin{pmatrix}
        \Phi_{l_0}\\\Phi_{l_1}
    \end{pmatrix}
    =\begin{pmatrix}
        \Phi_{0}\\\Phi_{1}
\end{pmatrix}+\int_0^\infty N(-s)
    \begin{pmatrix}
        0\\
        H(\phi,n)
    \end{pmatrix}\ ds,
\end{align}
then since $N(t)$ is unitary, we get
\begin{align}
    \left\|\begin{pmatrix}
        \phi\\ \partial_t \phi    \end{pmatrix}-N(t)
        \begin{pmatrix}
        \Phi_{l_0}\\\Phi_{l_1}    
        \end{pmatrix}
        \right\|_{\mathcal{H}^{K}}\le \int_t^\infty \left\| H(\phi,n)\right\|_{H^{K-1}}\ ds.
\end{align}
Therefore we have shown that $\phi$ scatters to $\phi_l$ as $t\to+\infty$:
\begin{align}
    \lim_{t\to \infty}\|(\phi,\partial_t\phi)-(\phi_l,\partial_t\phi_l)\|_{\mathcal{X}_K}= 0.
\end{align}

\texttt{Weak linear scattering of the Klein-Gordon component.} We show \eqref{Nscat1} and then \eqref{Nscat2} follows as a corollary. Recall that we have 
\begin{align}
    &\Box n=A^\al \pa_\al (|\phi|^2),\\
    &\Box  n_l=0.
\end{align}
Then we consider the inner product of $\vec{n}=(D^{K-1}n,\pa_t D^{K-1}n)$ and $\vec{n_l}=(D^{K-1}n_l,\pa_t D^{K-1}n_l)$ on $\mathcal{Z}\coloneqq\dot{H}^1\times L^2$. Here
$D=(D_i)_{i=1}^2$ is the usual differential operator with $D_1=\pa_1,D_2=\pa_2$. 
We denote the inner product to be $(\cdot,\cdot)_{\mathcal{Z}}$. Then we differentiate it in time to get
\begin{align}
    \frac{d}{dt}\int_{\R^2}\na D^{K-1} n\cdot \na D^{K-1} n_l+\pa_t D^{K-1}n\cdot \pa_t D^{K-1}n_l=-\frac12 \int_{\R^2} \pa_t D^{K-1}n_l(A^\al \pa_\al D^{K-1} |\phi|^2).
\end{align}
Integrating in the time slot  $(s,t)$ we thus arrive at
\begin{align}\label{4.61}
\big(\vec{n}(t),\vec{n_l}(t)\big)_{\mathcal{Z}}-\big(\vec{n}(s),\vec{n_l}(s)\big)_{\mathcal{Z}}=-\frac12 \int_s^t\int_{\R^2}\pa_tD^{K-1} n_l(A^\al \pa_\al D^{K-1}|\phi|^2)\ dx d\tau.
\end{align}
Recall that the Cauchy problem for the following homogeneous (inhomogeneous) 2d wave system:
\begin{align}
    \Box D^{K-1}n= G\, ,\ \mbox{with } (D^{K-1}n,\partial_t D^{K-1}n)|_{t=0}=(D^{K-1}N_0,D^{K-1}N_1),
\end{align}
the solution $\vec{n}=(D^{K-1}n,\partial_t D^{K-1}n)$ is given by
\begin{align}
    \vec{n}(t,x)=M(t)\begin{pmatrix}
D^{K-1}N_0\\
D^{K-1}N_1
\end{pmatrix}+\int_0^t M(t-\tau)\begin{pmatrix}
0\\
G(\tau)
\end{pmatrix}\ d\tau,
\end{align}
where the wave flow $M(t)$ is given by
\begin{align}
    M(t)=\begin{pmatrix}
\cos(t |\na|) & |\na|^{-1}\sin(t|\na|)\\
-|\na|\sin(t|\na|) & \cos(t|\na|)
\end{pmatrix}.
\end{align}
Similarly we have
\begin{align}
    \vec{n_l}(t,x)=M(t)\begin{pmatrix}
D^{K-1}N_{l_0}\\
D^{K-1}N_{l_1}
\end{pmatrix}.
\end{align}
It is known that $M(t)$ defines a unitary semi-group on $\mathcal{Y}_k$. Accordingly we can rewrite \eqref{4.61} as
\begin{equation}\label{4.67}
\begin{aligned}
    &\left( M(t)\begin{pmatrix}
        D^{K-1}N_0\\
        D^{K-1}N_1
    \end{pmatrix} +\int_0^t M(t-\tau)\begin{pmatrix}
        0\\
        G(\tau)
    \end{pmatrix},M(t)\begin{pmatrix}
       D^{K-1} N_{l_0}\\
       D^{K-1} N_{l_1}
    \end{pmatrix}
    \right)_{\mathcal{Z}}\\  -&\left( M(s)\begin{pmatrix}
         D^{K-1}N_0\\
        D^{K-1}N_1
    \end{pmatrix} +\int_0^s M(s-\tau)\begin{pmatrix}
        0\\
        G(\tau)
    \end{pmatrix},M(s)\begin{pmatrix}
        D^{K-1} N_{l_0}\\
       D^{K-1} N_{l_1}
    \end{pmatrix}
    \right)_{\mathcal{Z}}\\=&-\frac12 \int_s^t\int_{\R^2} \pa_t n_l(A^\al \pa_\al D^{K-1} |\phi|^2)\ dxd\tau.
\end{aligned}
\end{equation}
Noticing $M(t)$ and $M(s)$ are unitary in $\mathcal{Z}$, \eqref{4.67} is then equivalent to 
\begin{align}\label{4.68}
    \left(\int_s^t M(-\tau)\begin{pmatrix}
        0\\
        G(\tau)
    \end{pmatrix} \ d\tau,\begin{pmatrix}
        D^{K-1} N_{l_0}\\
       D^{K-1} N_{l_1}
    \end{pmatrix}\right)_{\mathcal{Z}}=-\frac12 \int_s^t\int_{\R^2} \pa_tD^{K-1} n_l(A^\al \pa_\al D^{K-1}|\phi|^2)\ dxd\tau.
\end{align}
We observe that the right hand side of \eqref{4.68} can be estimated as follows:
\begin{equation}\label{4.69}
\begin{aligned}
    &\text{RHS}\\ \ls& \int_s^t \int_{\R^2}(|D^{\le [\frac{K-1}{2}]}\phi \pa D^{\le K-1}\phi|+ |\pa D^{\le [\frac{K-1}{2}]}\phi D^{\le K-1}\phi|) |\pa_t D^{K-1}n_l|\ dxd\tau\\
    \ls&\int_s^t \left(\left\|\frac{D^{\le [\frac{K-1}{2}]}\phi}{\Lg \rho-\tau\Rg}\right\|_{\infty}\|D^{\le K}\phi\|_2+\Lg \tau\Rg^{-1}\left\|\frac{\Lg\rho+\tau\Rg}{\Lg \rho-\tau\Rg}D^{\le K-1}\phi  \right\|_2 \| \pa D^{\le [\frac{K-1}{2}]}\phi\|_{\infty} \right)\|\Lg \rho-\tau\Rg \pa_tD^{K-1} n_l\|_2\ d\tau\\
    \ls&\int_s^t \Lg \tau\Rg^{-2+2\de_1}(\|\Lg x\Rg \na D^{K-1} N_{l_0}\|_2+\|\Lg x\Rg D^{K-1} N_{l_1}\|_2),\quad\text{ by \eqref{3.52}, \eqref{3.34}, \eqref{DE-1} and \eqref{S4-4}}.
\end{aligned}
\end{equation}
Note that here we have indeed $[\frac{K-1}{2}]\le K-7$ by the choice $K \ge 12$. On top of this we observe that the right hand side of \eqref{4.68} goes to 0 as $s,t\to +\infty$ so long as $\begin{pmatrix}
        D^{K-1} N_{l_0}\\
       D^{K-1} N_{l_1}
\end{pmatrix}$ lies in a suitable space; one can take it to be $\text{ID}2$, namely the initial condition space for the wave component. It then follows from \eqref{4.69} that
\begin{align}
      \left(\int_t^{+\infty} M(-\tau)\begin{pmatrix}
        0\\
        G(\tau)
    \end{pmatrix} \ d\tau, \begin{pmatrix}
        D^{K-1} N_{l_0}\\
       D^{K-1} N_{l_1}
    \end{pmatrix}\right)_{\mathcal{Z}}\ls \Lg t\Rg^{-1+2\de_1}\left\|\begin{pmatrix}
        D^{K-1} N_{l_0}\\
       D^{K-1} N_{l_1}
\end{pmatrix}\right\|_{\mathcal{W}}\xrightarrow{t\to +\infty} 0 ,
\end{align}
where 
\begin{align}
   \left\|\begin{pmatrix}
        D^{K-1} N_{l_0}\\
       D^{K-1} N_{l_1}
\end{pmatrix}\right\|_{\mathcal{W}}=\|\Lg x\Rg \na D^{K-1} N_{l_0}\|_2+\|\Lg x\Rg D^{K-1} N_{l_1}\|_2.
\end{align}
Therefore we obtain the convergence $\int_t^{+\infty} M(-\tau)\begin{pmatrix}
        0\\
        G(\tau)
    \end{pmatrix} \ d\tau$ to 0 in the dual space $\mathcal{W'}$:
\begin{align}
    \lim_{t\to+\infty}\left\|\int_t^{+\infty}  M(-\tau)\begin{pmatrix}
        0\\
        G(\tau)
    \end{pmatrix} \ d\tau \right\|_{\mathcal{W}'}=0,\quad \text{with a rate of }\Lg t\Rg^{-1+2\de_1}.
\end{align}
On the other hand notice that $\mathcal{W}$ is dense in $\dot{H}^1\times L^2$, one has the weak convergence:
\begin{align}\label{4.72}
     \lim_{t\to+\infty} \left(\int_t^{+\infty} M(-\tau)\begin{pmatrix}
        0\\
        G(\tau)
    \end{pmatrix} \ d\tau, \begin{pmatrix}
        D^{K-1} F_{l_0}\\
       D^{K-1} F_{l_1}
\end{pmatrix}\right)_{\mathcal{Z}}= 0 ,
\end{align}
for any $(D^{K-1} F_{l_0},  D^{K-1} F_{l_1})\in \dot{H}^1\times L^2$. In particular, we set
\begin{align}\label{4.73}
    \begin{pmatrix}
       D^{K-1} N_{l_0}\\ D^{K-1}N_{l_1}
    \end{pmatrix}
    =\begin{pmatrix}
       D^{K-1} N_{0}\\ D^{K-1}N_{1}
\end{pmatrix}+\int_0^{+\infty} M(-\tau)
    \begin{pmatrix}
        0\\
        G(\phi)
    \end{pmatrix}\ d\tau.
\end{align}
Rearranging \eqref{4.72} and \eqref{4.73} we arrive at
\begin{equation}
    \begin{aligned}
&\left( \begin{pmatrix}
       D^{K-1} N_{l_0}\\ D^{K-1}N_{l_1}
    \end{pmatrix}-\begin{pmatrix}
       D^{K-1} N_{0}\\ D^{K-1}N_{1}
\end{pmatrix}-\int_0^t M(-\tau)
    \begin{pmatrix}
        0\\
        G(\phi)
    \end{pmatrix}\ d\tau,\begin{pmatrix}
        D^{K-1} F_{l_0}\\
       D^{K-1} F_{l_1}
\end{pmatrix}
\right)_{\mathcal{Z}}\\
    =&\left(\int_t^{+\infty} M(-\tau)\begin{pmatrix}
        0\\
        G(\tau)
    \end{pmatrix} \ d\tau, \begin{pmatrix}
        D^{K-1} F_{l_0}\\
       D^{K-1} F_{l_1}
\end{pmatrix}\right)_{\mathcal{Z}}\xrightarrow{t\to+\infty}0.
\end{aligned}
\end{equation}
Noting that $M(t)$ is unitary in $\mathcal{Z}$, we have actually
\begin{equation}
    \begin{aligned}
0=&\lim_{t\to+\infty}\left( M(t)\begin{pmatrix}
       D^{K-1} N_{l_0}\\ D^{K-1}N_{l_1}
    \end{pmatrix}- M(t)\begin{pmatrix}
       D^{K-1} N_{0}\\ D^{K-1}N_{1}
\end{pmatrix}-\int_0^t M(t-\tau)
    \begin{pmatrix}
        0\\
        G(\phi)
    \end{pmatrix}\ d\tau, M(t)\begin{pmatrix}
        D^{K-1} F_{l_0}\\
       D^{K-1} F_{l_1}
\end{pmatrix}
\right)_{\mathcal{Z}}\\
=&\lim_{t\to+\infty}\int_{\R^2}\na D^{K-1} (n-n_l)\cdot \na D^{K-1}f+\pa_t D^{K-1}(n-n_l)\cdot \pa_t D^{K-1}f\ dx,
\end{aligned}
\end{equation}
where $f$, $n_l$ are the free waves with initial data $(F_{l_0},F_{l_1})$ and $(N_{l_0},N_{l_1})$ respectively. This proves \eqref{Nscat1} and \eqref{Nscat2} follows afterwards.

\end{proof}
\begin{rem}
The scattering results for the Klein-Gordon-Zakharov equations have been obtained in \cite{D,OTT95}, with high regular initial data. It is worth pointing out here that there are different
difficulties arising in seeking scattering results for initial data of low regularity data (cf. \cite{GNW14} on Klein-Gordon-Zakharov equations). Nevertheless, the strong scattering of the wave field $n$ cannot be shown via our method due to the critical decay-in-time as $$\|\pa(|\phi|^2)\|_{H^m}(s)\ls \Lg s\Rg^{-1}\implies \int_1^t\|\pa(|\phi|^2)\|_{H^m}\ ds\sim \log(t).$$
In fact, whether the wave
field $n$ scatters strongly (linearly or nonlinearly) is unknown. As a consequence we leave it open here.
\end{rem}

\begin{rem}\label{rem4.2}
    We note that the weak linear scattering we proved above is necessary to the strong linear scattering. Nevertheless, it is also worth pointing out here that the weak convergence immediately leads to a uniform boundedness in the energy class by the classic Uniform Boundedness Principle. More specifically speaking, one has
    \begin{align}
        \|(n,\pa_tn)\|_{\mathcal{Y}_K}<+\infty,
    \end{align}
    which is also necessary to the strong linear scattering.
\end{rem}

\section{The \eqref{eq:kg2} case}\label{sec:bcase}
In this section we discuss more about the global stability result of \eqref{eq:kg2} system. Recall that \eqref{eq:kg2} is of the following form:
\begin{align*}
    \begin{cases}
&\Box n = A^{\alpha\beta}\partial_{\alpha\beta}(|\phi|^2),
\\
&\Box \phi + \phi = B n \phi,
\end{cases}
\end{align*}
with  the initial data being $(\phi,\partial_t \phi, n,\pa_{t}n)|_{t=0}=(\Phi_0, \Phi_1, N_0, N_1)$. We decompose $n=n_0+A^{\al\be} \pa_{\al\be} N$ where
\begin{align}
\begin{cases}
\Box n_0=0,\\
(n_0,\partial_t n_0)|_{t=0} =(N_0,N_1),
\end{cases}
\qquad \text{and}\qquad\
\begin{cases}
\Box N=|\phi|^2,\\
(N,\partial_t N) |_{t=0}=(0,0).
\end{cases}
\end{align}
Different from the set-up in the \eqref{eq:kg1} case here we require $N_0=\pa_i M_0$ and $N_1=\pa_j M_1$ for some $i,j=1,2$ to apply our analysis tools. In fact similar set-up has been adopted by work \cite{DM21} which dates back to a well-known energy conservation law \eqref{1.19} (cf. \cite{GNW14}) in the area of Klein-Gordon-Zakharov system. Without loss of generality we write $n_0=Dm_0$, $n=Dm_0+A^{\al\be}\pa_{\al\be}N$ and 
\begin{align}
\begin{cases}
\Box m_0=0,\\
(m_0,\partial_t m_0)|_{t=0} =(M_0,M_1),
\end{cases}
\end{align}
where $D=(D_i)_{i=1}^2$ is the usual differential operator with $D_1=\pa_1,D_2=\pa_2$.
 \begin{thm}\label{thm2}
Consider the coupled wave-Klein-Gordon system as in \eqref{eq:kg2} and let $K$ be an integer no less than 12. Then for any small $0<\de<\frac12$, there exists small $\varepsilon_0>0$ such that if $\varepsilon\in (0,\varepsilon_0)$ and all initial data $(\Phi_0,\Phi_1,M_0,M_1)$ satisfying the smallness condition below: 
\begin{align}\label{ID2b}
    \|(\Phi_0,\Phi_1)\|_{\text{ID1}}+\|(M_0,M_1)\|_{\text{ID2}}<\varepsilon,
\end{align}
where the initial-data-norm is given by
\begin{equation}\label{ID3b}
\begin{split}
   \|(\Phi_0,\Phi_1)\|_{\text{ID1}}= &\|\Lg x \Rg^{K}\Lg \na\Rg^{K+1}\Phi_0\|_2+\|\Lg x \Rg^{K}\Lg \na\Rg^{K}\Phi_1\|_2+\|\Lg x\Rg^{K+1}\log(1+\Lg x\Rg) \Lg \na \Rg^{K}\Phi_0\|_2\\ &+\|\Lg x\Rg^{K+1}\log(1+\Lg x\Rg) \Lg \na \Rg^{K-1}\Phi_1\|_2,
    \\ \|(M_0,M_1)\|_{\text{ID2}}= &\sum_{j=0}^{K-5}\Big(\|\Lg x\Rg^{j+\frac52} \na^{j+3}(\na M_0,M_1)\|_1+\|\Lg x\Rg^{j+\frac52} \na^{j}(\na M_0,M_1)\|_1\Big)\\&+\sum_{j=0}^K\|\Lg x\Rg^{j+1}\na^j(\na M_0,M_1)\|_2.
    \end{split}
\end{equation}
%   \begin{align}\label{ID1}
% \sum_{j=0}^{K}\|\langle x\rangle^{j+1}\nabla^j(\nabla \Phi_0,\Phi_1)\|_{L^2}
% +\sum_{j=0}^{k_1+3}\|\langle x\rangle^{j+3}\nabla^j(\nabla \Phi_0,\Phi_1)\|_{L^2}
% +\sum_{j=0}^{K}\|\langle x\rangle^{j+1}\nabla^j(\nabla n_0,n_1)\|_{L^2}\le \varepsilon
% \end{align}
% or 
% \begin{align}
% \sum_{j=0}^{K+1}\|\langle x\rangle^{j+2}\nabla^j \Phi_0\|_{L^2}
% +\sum_{j=0}^{K}\|\langle x\rangle^{j+3}\nabla^j \Phi_1\|_{L^2}
% +\sum_{j=0}^{K}\|\langle x\rangle^{j}\nabla^j n_0\|_{L^2}
% +\sum_{j=0}^{K-1}\|\langle x\rangle^{j+1}\nabla^j n_1\|_{L^2}\le \varepsilon.
% \end{align}
We then can conclude the following:

\texttt{(i).} The Cauchy problem for the \eqref{eq:kg2} system admits a couple of global solutions $(\phi,n)$ in time with the following energy estimates:  
\begin{align}
\| \partial \Gamma^{\le K}\phi\|_{2}+\| \Gamma^{\le K}\phi\|_{2}+\|\Ga^{\le K}n\|_2\le C\varepsilon \Lg t\Rg^{\de}
\end{align}
for some constant $C>0$. Moreover, we have the following estimate for the lower-order energy:
\begin{align}
\| \partial \Gamma^{\le K-1}\phi\|_{2}+\| \Gamma^{\le K-1}\phi\|_{2}+\|\Gamma^{\le K-2}n\|_{2}\le C\varepsilon.
\end{align}

\texttt{(ii).} Such global solutions admit the following pointwise decay-in-time estimate: 
\begin{align}
    |\phi(t,x)|\le C_1 \varepsilon \Lg t\Rg^{-1},\quad |n(t,x)|\le C_2 \varepsilon \Lg t\Rg^{-\frac12} \Lg t-|x|\Rg^{-\frac{1+\delta}{2}},
\end{align}
for some positive constants $C_1,C_2>0$. 
\end{thm}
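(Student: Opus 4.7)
The plan is to run the same bootstrap framework as in Sections~\ref{sec:energy}--\ref{sec:close} with the decomposition $n = Dm_0 + A^{\al\be}\pa_{\al\be}N$ replacing $n = n_0 + A^\al\pa_\al\td{n}$. I would take the a priori hypothesis
\begin{align*}
|\Ga^{\le K-5}Dm_0| + |\pa^2\Ga^{\le K-5}N| \le C_1\varepsilon\Lg t\Rg^{-\frac12}\Lg r-t\Rg^{-\frac{1+\de}{2}},\quad |\Ga^{\le K-5}\phi|\le C_2\varepsilon\Lg t+r\Rg^{-1},
\end{align*}
which together give the desired pointwise bound on $\Ga^{\le K-5}n$. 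Note that, because the Klein-Gordon nonlinearity in \eqref{eq:kg2} is $Bn\phi$ rather than $B^\al\phi\pa_\al n$, it is $n$ itself (rather than $\pa n$) that appears in the KG energy estimate, which is why the hypothesis is imposed on $|Dm_0|$ and $|\pa^2 N|$ instead of one derivative higher. The structural restriction $(N_0,N_1) = (\pa_i M_0,\pa_j M_1)$ is imposed precisely so that $n_0 = Dm_0$ is a first spatial derivative of a free wave, and therefore inherits $\Lg t\Rg^{-1/2}\Lg r-t\Rg^{-3/2}$ decay from Lemma~\ref{lemB2} applied to $m_0$, matching the second-derivative character of $\pa^2 N$.

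For the Klein-Gordon energy estimate, the nonlinearity $\Ga^I(Bn\phi) = B\sum_{I_1+I_2=I}C\Ga^{I_1}n\,\Ga^{I_2}\phi$ contains no derivative on $n$, so the Alinhac ghost-weight splitting of Lemma~\ref{lem3.1} into $|I_1|<|I_2|$ and $|I_1|\ge|I_2|$ cases goes through verbatim, with $\Ga^{\le K/2}n$ replacing $\pa\Ga^{\le K/2}n$ in the first case and a direct $\|\Ga^{\le K/2}\phi\|_\infty$ bound in the second. For the wave field one must instead control $\|\pa^2\Ga^{\le K}N\|_2$ (one derivative higher than $\|\pa\Ga^{\le K}\td n\|_2$ of \eqref{eq:kg1}); this is obtained by differentiating $\Box N = |\phi|^2$ twice and performing the usual energy estimate, after which the resulting quadratic terms $\pa\Ga^{J_1}\phi\,\pa\Ga^{J_2}\phi$ and $\Ga^{J_1}\phi\,\pa^2\Ga^{J_2}\phi$ each have a low-order factor controlled by $\|\Ga^{\le K/2+1}\phi\|_\infty \lesssim \varepsilon\Lg t+r\Rg^{-1}$, and Gronwall yields $\|\pa^3\Ga^{\le K}N\|_2 \lesssim \varepsilon^2\Lg t\Rg^{\de_1}$. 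The weighted almost-top-order and lower-order energy estimates in Lemmas~\ref{lem3.7}--\ref{lem3.1a} and Propositions~\ref{lem3.2}--\ref{lem3.3} then transfer with the differentiation index shifted by one on the $N$ side, and the KG machinery $|\phi/\Lg r-t\Rg| \lesssim \Lg t+r\Rg^{-2}$ of Lemma~\ref{lem3.7a} is intrinsic to $\phi$ and carries over unchanged.

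The hardest part will be the initial-data bookkeeping, analogous to Remark~\ref{rem3.3} and Proposition~\ref{prop} but now one derivative higher on $N$: one must verify $\|(\pa^2\Ga^{\le K+1}N)|_{t=0}\|_2 \le C\varepsilon^2$, together with the corresponding $L^\infty$ analogues at lower order for the pointwise closure. Using $\pa_{tt}N = \De N + |\phi|^2$ iteratively and $(N,\pa_t N)|_{t=0}=0$, these reduce to weighted $L^2$ norms of spatial derivatives of $|\phi|^2$ at $t=0$, which the smallness of $\|(\Phi_0,\Phi_1)\|_{\text{ID1}}$ bounds by $O(\varepsilon^2)$; the extra derivative in $\pa_{\al\be}N$ is exactly what forces the weights $\Lg x\Rg^{j+\frac52}\na^{j+3}(\na M_0,M_1)$ in \eqref{ID3b} rather than the $\na^j$ weights of \eqref{ID3}. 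Once this is in hand, the pointwise closure of Section~\ref{sec:close} runs: Lemma~\ref{lem2.6} applied to $\Box N = |\phi|^2$ trades $\Lg r-t\Rg\pa^3 N$ for $\pa^2\Ga^{\le 1}N$ plus $(t+r)\pa^2(|\phi|^2)$, and Georgiev's Lemma~\ref{kgt32} applied to $\Box\phi+\phi = Bn\phi$ with source estimated via the new a priori bounds closes the bootstrap with $C_1,C_2>C_i+\widetilde C\varepsilon$ in the usual way.
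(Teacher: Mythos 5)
Your overall plan matches the paper's intended (and intentionally omitted) argument: rerun the bootstrap of Sections~\ref{sec:energy}--\ref{sec:close} with $n=Dm_0+A^{\al\be}\pa_{\al\be}N$ in place of $n=n_0+A^\al\pa_\al\td n$, the shift from $\pa n$ to $n$ in the Klein--Gordon nonlinearity being compensated by the shift from $\pa_\al\td n$ to $\pa_{\al\be}N$. You correctly identify why the structural restriction $N_0=\pa_i M_0$, $N_1=\pa_j M_1$ is imposed (so that $n_0=Dm_0$ inherits $\Lg t\Rg^{-1/2}\Lg t-r\Rg^{-3/2}$ decay via Lemma~\ref{lemB2} applied to $m_0$, matching the $\pa^2 N$ term).

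Three bookkeeping slips should be corrected, though none is a gap in the argument. First, $\|\pa^2\Ga^{\le K}N\|_2$ is at the same derivative level as $\|\pa^2\Ga^{\le K}\td n\|_2$ (the quantity that in \eqref{3.32} bounds $\|\pa\Ga^{\le K}n\|_2$), not ``one derivative higher'' than $\|\pa\Ga^{\le K}\td n\|_2$; the cancellation is exact, consistent with Theorem~\ref{thm2} bounding only $\|\Ga^{\le K}n\|_2$ rather than $\|\pa\Ga^{\le K}n\|_2$. Relatedly, Gronwall should yield control of $\|\pa^2\Ga^{\le K}N\|_2$ (differentiating $\Box N=|\phi|^2$ once), not $\|\pa^3\Ga^{\le K}N\|_2$. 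Second, the ID2-norms in \eqref{ID3} and \eqref{ID3b} have identical structure (both already contain $\na^j$ and $\na^{j+3}$ terms), so your claimed contrast ``$\na^{j+3}$ rather than $\na^j$'' does not hold. The actual change is that \eqref{ID3b} is evaluated on $(M_0,M_1)$ rather than $(N_0,N_1)=(\pa_i M_0,\pa_j M_1)$, amounting to one fewer derivative on $(N_0,N_1)$ and again matching the one-fewer-derivative energy bound. Third, applying Lemma~\ref{lem2.6} to $u=\Ga^\al N$ gives $|\Lg t-r\Rg\pa^2\Ga^\al N|\ls|\pa\Ga^{\le|\al|+1}N|+\Lg t+r\Rg|\Ga^\al(|\phi|^2)|$, which is what feeds the a priori bound on $|\pa^2\Ga^{\le K-5}N|$; there is no $\pa^3 N$ or $\pa^2(|\phi|^2)$ at this step.
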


\begin{proof}
    The proof of Theorem~\ref{thm2} follows from a very similar bootstrap framework to the proof of Theorem~\ref{thm}, we therefore omit the details. In fact we can get the scattering of \eqref{eq:kg2} system too and we only state the result. 
\end{proof}

\begin{thm}
 Assume the same setting as in Theorem~\ref{thm2} is adopted and in addition we denote the energy space {$\mathcal{X}_{K}=H^{K}\times H^{K-1}$ and $\mathcal{Y}_{K-1}=\dot{H}^{K-1}\times \dot{H}^{K-2}\cap\dot{H}^1\times L^2$ }where $H^m, \dot{H}^m$ are standard Sobolev spaces in $\R^2$ that are defined in \eqref{eq:hsdot}. We also denote
$D=(D_i)_{i=1}^2$ to be the usual differential operator with $D_1=\pa_1,D_2=\pa_2$. Then the following statements hold:
 
 \texttt{(i)}. The solution $\phi$ scatters to a free solution in $\mathcal{X}_K$ as $t\to+\infty$, namely, there exists $(\Phi_{l_0},\Phi_{l_1})\in \mathcal{X}_K$ such that 
   \begin{align}
    \lim_{t\to+ \infty}\|(\phi,\partial_t\phi)-(\phi_l,\partial_t\phi_l)\|_{\mathcal{X}_K}= 0,
\end{align}
where $\phi_l$ is the linear homogeneous solution to the Klein-Gordon equation with initial data $(\Phi_{l_0},\Phi_{l_1})$.

\texttt{(ii)}. The solution $n$ scatters to a free solution weakly in $\mathcal{Y}_{K-1}$ as $t\to+ \infty$, i.e. there exists $(N_{l_0},N_{l_1})\in \mathcal{Y}_{K-1}$ such that for any $(F_0,F_1)\in \mathcal{Y}_{K-1}$ we have
\begin{align}
    &\lim_{t\to + \infty}\int_{\R^2} \na D^{K-2} (n-n_l)\cdot\na D^{K-2} f+\pa_t \na D^{K-2}(n-n_l)\cdot \pa_t\na D^{K-2} f=0,\\ 
\mbox{and}\ &\lim_{t\to + \infty}\int_{\R^2}\na(n-n_l)\cdot \na f+\pa_t(n-n_l)\pa_tf=0,
\end{align}
where $n_l$ is the linear homogeneous solution to the wave equation with initial data $(N_{l_0},N_{l_1})$ and $f$ is the linear homogeneous solution to the wave equation with initial data $(F_0,F_1)$.

\end{thm}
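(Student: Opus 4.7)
The plan is to mirror the two-step argument of Theorem~\ref{thm1}, adapting it to the modified nonlinear structure of \eqref{eq:kg2}: the Klein-Gordon source is $Bn\phi$ (no derivative on $n$), the wave source is $A^{\alpha\beta}\partial_{\alpha\beta}(|\phi|^2)$ (two derivatives on $|\phi|^2$), and the wave field decomposes as $n=Dm_0+A^{\alpha\beta}\partial_{\alpha\beta}N$. I would work with the unitary Klein-Gordon semi-group $N(t)$ on $\mathcal{X}_K$ and the unitary wave semi-group $M(t)$ on $\mathcal{Y}_{K-1}$, and feed in the bootstrap output of Theorem~\ref{thm2}: uniform lower-order energies, the weighted energy analog of \eqref{3.322}, and the pointwise bounds $|\phi|\lesssim \varepsilon\Lg t\Rg^{-1}$, $|n|\lesssim \varepsilon\Lg t\Rg^{-1/2}\Lg t-r\Rg^{-(1+\delta)/2}$.

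For part (i), I would define the asymptotic Klein-Gordon data by
\[
\begin{pmatrix}\Phi_{l_0}\\ \Phi_{l_1}\end{pmatrix} = \begin{pmatrix}\Phi_0\\ \Phi_1\end{pmatrix} + \int_0^{+\infty} N(-s)\begin{pmatrix} 0 \\ Bn\phi(s)\end{pmatrix}\, ds,
\]
and reduce convergence in $\mathcal{X}_K$ to showing $\int_0^{+\infty}\|Bn\phi\|_{H^{K-1}}(s)\, ds<+\infty$. Distributing derivatives between the two factors in the Leibniz expansion of $\|n\phi\|_{H^{K-1}}$ and combining the weighted high-order control of $\phi$ (the analogs of \eqref{3.322}, \eqref{3.52}) with the pointwise and lower-order energy control of $n$, in the same $G_1,G_2$ spirit as just before \eqref{4.38}, one obtains $\|n\phi\|_{H^{K-1}}\lesssim \varepsilon^2\Lg t\Rg^{-1-\sigma}$ for some $\sigma>0$. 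The absence of any derivative on $n$ in the source actually makes this step cleaner than the corresponding estimate in the proof of Theorem~\ref{thm1}, so no new ingredient is needed.

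For part (ii), I would duplicate the duality-and-density scheme of Section~\ref{scat}. Differentiating $(\vec n(t),\vec n_l(t))_{\mathcal{Z}}$ at the level of $D^{K-2}$ derivatives (the drop from $\mathcal{Y}_K$ to $\mathcal{Y}_{K-1}$ comes from this choice) and integrating in time yields an identity analogous to \eqref{4.67}. The crucial maneuver is a single spatial integration by parts that moves one of the two derivatives in $A^{\alpha\beta}\partial_{\alpha\beta}$ onto $\partial_t D^{K-2} n_l$, producing $\partial_\beta \partial_t D^{K-2} n_l$ of uniform $L^2$ norm (controlled by $\|(N_{l_0},N_{l_1})\|_{\mathcal{Y}_{K-1}}$), paired with $\partial_\alpha D^{K-2}(|\phi|^2)\sim \phi\,\partial\phi+\text{similar}$. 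The latter factor is handled by the interplay between $|\phi/\Lg t-r\Rg|\lesssim \varepsilon\Lg t\Rg^{-2}$ and the weighted high-order control of $\partial\phi$ used in \eqref{4.69}, yielding an integrable rate $\Lg t\Rg^{-1+2\delta_1}$ against the dense test-function subspace $\mathcal{W}\subset \dot H^1\times L^2$. The Uniform Boundedness Principle then upgrades this to weak convergence on all of $\mathcal{Y}_{K-1}$, and, exactly as in Remark~\ref{rem4.2}, also delivers the uniform energy boundedness of $(n,\partial_t n)$.

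The main obstacle is the bookkeeping forced by the extra derivative in $A^{\alpha\beta}\partial_{\alpha\beta}$: one must perform \emph{exactly one} integration by parts, since zero would leave two derivatives on $|\phi|^2$ and break the integrable-in-time decay (the critical $\Lg t\Rg^{-1}$ barrier of Remark~\ref{rem1.8} would resurface), while two would land $\partial^2 D^{K-2}$ on $n_l$ and require $\dot H^{K}\times \dot H^{K-1}$ control that $\mathcal{Y}_{K-1}$ does not supply. After this single integration by parts, the surviving derivative on $|\phi|^2$ produces $\phi\,\partial\phi$, which carries the machinery $|\phi/\Lg t-r\Rg|\lesssim \Lg t\Rg^{-2}$ crucial to beating the critical decay. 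Once this regularity match is verified, the remaining estimates are essentially a transcription of Section~\ref{scat} with the substitutions $\phi\,\partial_\alpha n\leadsto n\phi$ and $A^\alpha\partial_\alpha\td n\leadsto A^{\alpha\beta}\partial_{\alpha\beta}N$.
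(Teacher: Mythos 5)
Part (i) of your proposal is a faithful transcription of the Klein--Gordon scattering argument in Section~\ref{scat} and is fine. The problem is the ``crucial maneuver'' you build part (ii) around. After a single integration by parts you pair against $\partial_\beta\partial_t D^{K-2}n_l$, and you assert this has uniform $L^2$ norm controlled by $\|(N_{l_0},N_{l_1})\|_{\mathcal{Y}_{K-1}}$. Count the derivatives: for spatial $\beta$ this is $K-1$ spatial derivatives of $\partial_t n_l$, and for $\beta=0$ it is $\Delta D^{K-2}n_l$; either way it needs $\dot H^{K}\times\dot H^{K-1}$ data, which is exactly the control you correctly observe $\mathcal{Y}_{K-1}$ does \emph{not} supply --- your own objection to two integrations by parts already applies to one. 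In addition, for the components of $A^{\alpha\beta}\partial_{\alpha\beta}$ with $\alpha$ or $\beta$ equal to $0$ (e.g.\ $A^{00}\partial_{tt}$, which is allowed since no structural condition is imposed on $A^{\alpha\beta}$) there is no spatial derivative to move, so ``a single spatial integration by parts'' does not even produce the claimed pairing; and the weighted quantity $\|\langle\rho-\tau\rangle\,\partial_\beta\partial_t D^{K-2}n_l\|_2$ that the scheme needs through \eqref{DE-1} and Lemma~\ref{lemA1} costs yet more regularity of the test data. So the step, as described, fails.

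The motivation you give for it --- that keeping both derivatives on $|\phi|^2$ would resurrect the critical $\langle t\rangle^{-1}$ barrier of Remark~\ref{rem1.8} --- is also mistaken, and correcting it removes the need for any integration by parts; indeed this is precisely what the drop from $D^{K-1}$, $\mathcal{Y}_K$ in Theorem~\ref{thm1} to $D^{K-2}$, $\mathcal{Y}_{K-1}$ here is designed for. Pair $\partial_t D^{K-2}n_l$ directly with $A^{\alpha\beta}\partial_{\alpha\beta}D^{K-2}(|\phi|^2)$ and repeat the estimate \eqref{4.69}: the worst factor is $\partial^2 D^{K-2}\phi$, which is of the form $\partial\Gamma^{\le K-1}\phi$ (translations only) and hence uniformly bounded in $L^2$ by the \eqref{eq:kg2}-analogue of \eqref{3.36}; the low factor supplies $\langle\tau\rangle^{-2}$ through $|\phi/\langle\rho-\tau\rangle|$ as in \eqref{3.52}, or, for the balanced splits, $\langle\tau\rangle^{-1}$ from \eqref{A3} combined with $1\lesssim\langle\tau\rangle^{-1}\frac{\langle\rho+\tau\rangle}{\langle\rho-\tau\rangle}\langle\rho-\tau\rangle$ and the weighted bound \eqref{3.322}; and the weight $\langle\rho-\tau\rangle$ is absorbed by $\partial_t D^{K-2}n_l$ via \eqref{DE-1} and Lemma~\ref{lemA1} against the weighted dense class, after which the density/duality upgrade proceeds verbatim. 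The barrier of Remark~\ref{rem1.8} concerns the unweighted $H^m$ norm needed for \emph{strong} scattering and does not reappear in the dual pairing, because the $\langle t-r\rangle$ weight is carried by the free test wave. In short, your overall plan --- transcribing Section~\ref{scat} with the new decomposition --- is the intended one (the paper omits the proof as a routine modification), but the integration-by-parts step should be deleted rather than inserted, and with it your claim about what $\mathcal{Y}_{K-1}$ controls.
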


\appendix

\section{Estimate of $\|\widetilde{\Ga}\Gamma^{\beta}n_0\|_{2}^2$.}

In this section we give an approach to estimate $\|\wtd{\Ga}\Gamma^{\beta}n_0\|_{2}^2$ as in Lemma~\ref{lem3.7}. It is clear that $\Box \wtd{\Ga}\Ga^\beta n_0=0.$ Therefore it is known that we can write $\wtd{\Ga}\Ga^\beta n_0$ in the following mild form: 
\begin{align}\label{A.1}
   \wtd{\Ga} \Ga^\beta n_0(x)= (\cos t|\na|\  v_0)(x)+(\frac{\sin t|\na|}{|\na|}v_1)(x),
\end{align}
where $(v_0,v_1)=(\wtd{\Ga}\Ga^\beta n_0,\partial_t\wtd{\Ga}\Ga^\beta n_0)|_{t=0}$.

\begin{lem}\label{lemA1}
  Assume  $\wtd{\Ga}\Ga^\be n_0$ is defined as in \eqref{A.1} above, then it holds that
  \begin{align}\label{S4-4}
  \|\wtd{\Ga}\Ga^\be n_0\|_2\ls \|\Lg x\Rg \na v_0\|_2+ \log (t+2) \| \Lg x\Rg v_1\|_2.
\end{align}
\end{lem}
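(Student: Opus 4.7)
\textbf{Proof proposal for Lemma~\ref{lemA1}.}

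The plan is to estimate the two wave propagators in \eqref{A.1} separately. For the cosine term $\cos(t|\na|) v_0$, the operator $\cos(t|\na|)$ is a unit-norm Fourier multiplier on $L^2$, so $\|\cos(t|\na|) v_0\|_2 \le \|v_0\|_2$; then the weighted Sobolev-type inequality \eqref{S-2} gives $\|v_0\|_2 \ls \|\Lg x\Rg \na v_0\|_2$. This handles the first term on the right-hand side of \eqref{S4-4} uniformly in $t$, with no logarithmic loss.

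The main work is for the sine term $u_{\mathrm{sin}}(t):=\frac{\sin(t|\na|)}{|\na|} v_1$, where the low-frequency singularity of $1/|\na|$ in two dimensions forces the logarithmic loss. I would decompose $v_1 = v_1^{\mathrm{in}} + v_1^{\mathrm{out}}$ with $v_1^{\mathrm{in}} = v_1\,\mathbf{1}_{|y|\le t}$. For the outer piece, use the trivial Fourier-side bound $|\sin(ta)/a|\le t$ combined with the weighted decay $\|v_1^{\mathrm{out}}\|_2 \le t^{-1}\|\Lg y\Rg v_1\|_2$ to conclude $\|\frac{\sin(t|\na|)}{|\na|} v_1^{\mathrm{out}}\|_2 \le \|\Lg y\Rg v_1\|_2$.

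For the inner piece I perform a Fourier-side dyadic split: write $\|u_{\mathrm{sin}}^{\mathrm{in}}(t)\|_2^2$ via Plancherel as the integral of $|\sin(2\pi t|\xi|)/(2\pi|\xi|)|^2\,|\widehat{v_1^{\mathrm{in}}}(\xi)|^2$ and split at $|\xi|=1/t$ and $|\xi|=1$. On $\{|\xi|\le 1/t\}$ bound the multiplier by $t$ via $|\sin x|\le |x|$; since this disk has area $\pi/t^2$, the contribution is at most $\pi\|\widehat{v_1^{\mathrm{in}}}\|_\infty^2\le \pi\|v_1^{\mathrm{in}}\|_1^2$. On $\{1/t\le |\xi|\le 1\}$ bound the multiplier by $1/(2\pi|\xi|)$ and the Fourier transform by $\|v_1^{\mathrm{in}}\|_1$; the resulting radial integration of $|\xi|^{-2}$ gives $C\log(t+2)\|v_1^{\mathrm{in}}\|_1^2$. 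On $\{|\xi|\ge 1\}$ the multiplier is bounded by $1$, contributing at most $\|v_1\|_2^2\le \|\Lg y\Rg v_1\|_2^2$. To close the estimate I bound $\|v_1^{\mathrm{in}}\|_1$ via Cauchy--Schwarz with the weight,
\[
\|v_1^{\mathrm{in}}\|_1 \le \Bigl(\int_{|y|\le t}\Lg y\Rg^{-2}\, dy\Bigr)^{1/2}\|\Lg y\Rg v_1\|_2 \ls \sqrt{\log(t+2)}\,\|\Lg y\Rg v_1\|_2,
\]
using $\int_{|y|\le t}\Lg y\Rg^{-2}dy=\pi\log(1+t^2)$. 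Combining the three frequency zones yields $\|u_{\mathrm{sin}}^{\mathrm{in}}(t)\|_2\ls \log(t+2)\,\|\Lg y\Rg v_1\|_2$, which together with the outer-piece estimate and the cosine estimate proves \eqref{S4-4}.

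The main obstacle is precisely the two-dimensionality: the weight $\Lg y\Rg^{-1}$ barely fails to lie in $L^2(\R^2)$, so a clean inequality $\|v_1\|_1\ls \|\Lg y\Rg v_1\|_2$ is false and a logarithmic loss must be absorbed somewhere. The cutoff at scale $|y|=t$ trades this failure for a controlled $\sqrt{\log(t+2)}$ factor, and a second $\sqrt{\log(t+2)}$ factor arises from the radial integral of $|\xi|^{-2}$ on the annulus $\{1/t\le |\xi|\le 1\}$; together they produce the stated $\log(t+2)$ loss, which appears essentially sharp at this level of generality.
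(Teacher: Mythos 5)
Your proposal is correct, and it reaches \eqref{S4-4} by a genuinely different route than the paper for the problematic low-frequency part. The paper stays entirely on the Fourier side: it bounds the cosine term exactly as you do, and for $\frac{\sin(t|\na|)}{|\na|}v_1$ it splits the frequency integral at $|\xi|=1/t$ and $|\xi|=1/2$ and, on the intermediate annulus, inserts the factor $|\xi|^{-2}=|\log|\xi||^{2}\cdot\big(|\xi|\log|\xi|\big)^{-2}$, pulls out $|\log|\xi||^{2}\le|\log t|^{2}$, and then invokes a log-type Hardy inequality $\big\|\tfrac{h}{|\xi|\log|\xi|}\big\|_{L^2(B(0,1))}\ls\|\nabla h\|_{L^2}$ applied to $\widehat{v_1}\,\phi$ with a smooth cutoff $\phi$, using that the physical weight gives $\|\widehat{v_1}\|_{H^1}\ls\|\Lg x\Rg v_1\|_2$; the regions $|\xi|<1/t$ and $|\xi|\ge 1/2$ are handled similarly. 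You instead cut off in physical space at radius $t$, dispose of the outer piece by the trivial multiplier bound $t$ against the decay $\|v_1^{\mathrm{out}}\|_2\le t^{-1}\|\Lg y\Rg v_1\|_2$, and on the inner piece use only Plancherel, the $L^1\to L^\infty$ bound $\|\widehat{v_1^{\mathrm{in}}}\|_\infty\le\|v_1^{\mathrm{in}}\|_1$, and Cauchy--Schwarz with $\int_{|y|\le t}\Lg y\Rg^{-2}dy\sim\log t$; one $\sqrt{\log}$ then comes from the weighted $L^1$ bound and one from the radial $|\xi|^{-2}$ integral over $\{1/t\le|\xi|\le1\}$, and your bookkeeping (at most $\log^2$ in the squared norm) indeed lands on the stated $\log(t+2)$ factor. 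What each approach buys: the paper's log-Hardy argument needs no $L^1$ norms or spatial truncation and works directly with the $H^1$ regularity of $\widehat{v_1}$, which is the natural dual of the weight $\Lg x\Rg$; yours is more elementary, uses only standard inequalities, and makes transparent exactly where the two halves of the logarithmic loss originate. Both are valid under the Schwartz-class hypotheses in force here, and both are sharp at the level of the $\log(t+2)$ loss.
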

Before we give a proof to it, it is convenient to recall a useful lemma as below.
\begin{lem}[Log-type Hardy]
Let $B(0,1)\subset \R^2$, $h\in C_c^{\infty}(B(0,1))$. %and $\eta(x)=\frac1{|x|\log |x|}$. 
Then
\begin{align}\label{Hardy}
  \left \|\frac h{|x|\log |x|}\right\|_{L^2(B(0,1))}\lesssim \|\nabla h\|_{L^2(B(0,1))}.
\end{align}

\end{lem}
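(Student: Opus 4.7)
The plan is to reduce the 2D inequality to a one-dimensional weighted Hardy-type estimate by passing to polar coordinates, and then to prove the 1D statement by a single integration by parts followed by Cauchy--Schwarz.

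Writing $x=(r\cos\theta,r\sin\theta)$ converts the left-hand side of \eqref{Hardy} into
\begin{equation*}
\int_0^{2\pi}\!\int_0^1 \frac{|h(r,\theta)|^2}{r(\log r)^2}\,dr\,d\theta,
\end{equation*}
and bounds the right-hand side from below by $\int_0^{2\pi}\!\int_0^1|\pa_r h(r,\theta)|^2\,r\,dr\,d\theta$. It therefore suffices to prove, for every fixed angle $\theta$, the one-dimensional inequality
\begin{equation*}
\int_0^1 \frac{u(r)^2}{r(\log r)^2}\,dr \ \le\ 4\int_0^1 u'(r)^2\, r\,dr,\qquad u(r):=h(r,\theta),
\end{equation*}
where $u$ is smooth on $[0,1]$ and vanishes identically in some neighborhood of $r=1$ by the compact support assumption on $h$.

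For the 1D step, the key observation is the identity $\tfrac{d}{dr}\bigl(-\tfrac{1}{\log r}\bigr)=\tfrac{1}{r(\log r)^2}$ on $(0,1)$. Setting $I=\int_0^1 \tfrac{u^2}{r(\log r)^2}\,dr$ and $J=\int_0^1(u')^2\,r\,dr$, integration by parts yields
\begin{equation*}
I=\Bigl[-\tfrac{u^2}{\log r}\Bigr]_0^1+\int_0^1\tfrac{2uu'}{\log r}\,dr.
\end{equation*}
The boundary contribution at $r=1$ vanishes because $u\equiv 0$ in a neighbourhood of $r=1$, and the contribution at $r=0$ vanishes because $|\log r|^{-1}\to 0$ while $u$ remains bounded. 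Applying Cauchy--Schwarz to the remaining term,
\begin{equation*}
I \ \le\ 2\int_0^1 \frac{|uu'|}{|\log r|}\,dr \ \le\ 2\left(\int_0^1 \frac{u^2}{r(\log r)^2}\,dr\right)^{1/2}\!\left(\int_0^1 (u')^2 r\,dr\right)^{1/2}\ =\ 2\sqrt{I}\sqrt{J},
\end{equation*}
which gives $I\le 4J$. Integrating over $\theta$ completes the proof.

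The only real subtlety is verifying that both boundary terms in the integration by parts actually vanish: compact support of $h$ disposes of the $r=1$ endpoint, while the logarithmic decay of $|\log r|^{-1}$ handles the endpoint at the origin. A brief preliminary check that $I<\infty$ for smooth $u$ (needed to move the $\sqrt{I}$ to the left-hand side of the final absorbing inequality) is immediate from the substitution $s=-\log r$, which shows the integrand is locally integrable near $r=0$. No deeper obstacle is expected.
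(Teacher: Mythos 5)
Your proof is correct and follows essentially the same route as the paper: both reduce to a one-dimensional radial estimate built on the observation that $\frac{1}{r(\log r)^2}$ is the derivative of $-1/\log r$. The only difference is organizational: the paper completes a square with the multiplier $\varphi(r)=-\tfrac{0.1}{r\log r}$ (which sidesteps needing $I<\infty$ a priori), whereas you integrate by parts directly and absorb via Cauchy--Schwarz, duly checking the finiteness of $I$ and the vanishing of both boundary terms.
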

\begin{proof}
The proof is standard. First we choose
$\varphi(r)=-\frac{0.1}{r\log r}$.  Then
\begin{align*}
|h\varphi +\pa_{r}h|^2&=h^2\varphi^2 +\varphi\pa_{r}(h^2)+(\pa_r h)^2\ge 0,
\\ \implies\ \int_{B(0,1)} |\pa_{r}h|^2\ dx&\geq \int_{B(0,1)} (-\varphi^2+\frac1r\pa_{r}(r\varphi))h^2\ dx \\&=\int_{B(0,1)} (-\frac{0.01}{r^2(\log r)^2}+\frac{0.1}{r^2(\log r)^2})h^2\ dx
  \gtrsim\int_{B(0,1)} \frac{h^2}{r^2(\log r)^2}\ dx.
\end{align*}
This implies \eqref{Hardy}.
\end{proof}

\begin{proof}[Proof of Lemma~\ref{lemA1}]
    The proof for the 3D case is given in \cite{CX22}. We sketch the proof here for the sake of completeness. It is clear from the Fourier side that
\begin{align}
    \|\cos tD\ v_0\|_{L^2_x(\R^2)}\sim \|\cos(t|\xi|) \hat{v_0}(\xi)\|_{L^2_\xi(\R^2)}\lesssim\|\hat{v_0}\|_{L^2_\xi(\R^2)}\lesssim \|v_0\|_{L^2_x(\R^2)}. 
\end{align}
Similarly we can estimate the other term by splitting the region into 3 parts:
\begin{align}
    \left\| \frac{\sin tD}{D}v_1\right\|_2^2\sim\int_{\R^2}\frac{(\sin(t|\xi|))^2}{|\xi|^2}|\hat{v_1}(\xi)|^2\ d\xi
    =\int_{0<|\xi|<\frac1t}
    +\int_{\frac1t\le|\xi|<\frac12}+\int_{\frac12\le|\xi|<\infty}\frac{(\sin(t|\xi|))^2}{|\xi|^2}|\hat{v_1}(\xi)|^2\ d\xi.
\end{align}
%where
%\begin{align}
%    I_1=\int_{0<|\xi|<\frac1t}\frac{(\sin(t|\xi|))^2}{|\xi|^2}|\hat{v_1}(\xi)|^2\ d\xi,  I_1=\int_{0<|\xi|<\frac1t}\frac{(\sin(t|\xi|))^2}{|\xi|^2}|\hat{v_1}(\xi)|^2\ d\xi, I_1=\int_{0<|\xi|<\frac1t}\frac{(\sin(t|\xi|))^2}{|\xi|^2}|\hat{v_1}(\xi)|^2\ d\xi.
%\end{align}
The estimates follow by direct computation after applying \eqref{Hardy}:
\begin{align}
&\int_{\frac1t\le|\xi|<\frac12}\frac{(\sin(t|\xi|))^2}{|\xi|^2}|\hat{v_1}(\xi)|^2\ d\xi\\ \lesssim& \int_{\frac1t\le|\xi|<\frac12}\frac{|\hat{v_1}(\xi)\phi(\xi)|^2}{|\xi|^2}\ d\xi \lesssim\  \int_{\frac1t\le|\xi|<\frac12}\frac{|\hat{v_1}(\xi)\phi(\xi)|^2}{|\xi|^2 |\log |\xi||^2} |\log |\xi||^2\ d\xi\\
   \lesssim&|\log t|^2  \int_{\frac1t\le|\xi|<\frac12}\frac{|\hat{v_1}(\xi)\phi(\xi)|^2}{|\xi|^2 |\log |\xi||^2}\ d\xi\lesssim |\log t|^2 \int_{|\xi|<2}|\nabla(\hat{v_1}(\xi)\phi(\xi))|^2\ d\xi \qquad(\text{by } \eqref{Hardy})
   \\ \lesssim& |\log t|^2\|\hat{v_1}\|_{H^1}^2\lesssim |\log t|^2 \|\Lg x\Rg v_1\|_2^2,
\end{align}
 where we choose $\phi\in C_c^{\infty}$ such that $\phi=1$ when $|\xi|\le \frac23$ and $\phi=0$ when $|\xi|>1$. The estimates for $I_1$ and $I_3$ follow from similar arguments and we leave for the readers. These then conclude \eqref{S4-4}.
 
 %By this choice $|\na \phi|\lesssim \frac{1}{1-\frac2t}\lesssim 1$.}

\end{proof}

%\begin{rem}
%   Note that compared to the $X(\partial)$ trick (conformal energy estimates), this propagator estimate needs an additional assumption:
%\begin{align}\label{4.48}
%    \|v_0\|_2+\|\Lg x\Rg v_1\|_2\lesssim\sum_{j=0}^K\|\Lg x\Rg^{j+1}\na^j(\na n_0,n_1)\|_2\le C\varepsilon.
%\end{align}
%One clearly sees the difference between \eqref{3.12a} and \eqref{4.48}.
%\end{rem}

\section{Estimate of the homogeneous wave component}
In this section we prove Lemma~\ref{lemhom}. %and Lemma~\ref{cor:gradinh}.

%Let $v(t,x)$ be a solution of the following system:
%\begin{align}\label{vhom}
% \begin{cases}
% \Box v=0;\\
% v(0,x)=v_{0}(x),\ \pa_{t}v(0,x)=v_{1}(x).
% \end{cases}
%\end{align}
%It is known that the solution to such system can be written as the following mild form: 
%\begin{align*}
%    v(t,x)=&\cos tD\ (v_0)(x)+\frac{\sin tD}{D}(v_1)(x)\\=&\frac1{2\pi}\pa_{t}\int_{|x-y|<t}\frac{v_{0}(y)dy}{\sqrt{t^2-|x-y|^2}}+\frac1{2\pi}\int_{|x-y|<t}\frac{v_{1}(y)dy}{\sqrt{t^2-|x-y|^2}},
%\end{align*}
%where $D=|\nabla|$.

\begin{lem}[Homogeneous estimate]\label{lemBB2} Assume $v$ is defined as in \eqref{vhom}-\eqref{vhom1} and assume $v_0, v_1\in \mathcal{S}(\R^2)$, then the following estimate holds: for any $t\ge 2$ and $x\in\R^2$,
 \begin{align}\label{BB2}
   | v(t,x)|\lesssim t^{-\frac12}\Lg t-|x|\Rg^{-{\frac12}}(\|\Lg y\Rg^{\frac12}\Lg \nabla\Rg v_{1}\|_{L^1}+\|\Lg y\Rg^{\frac12}\Lg \nabla\Rg^{ 2}v_{0}\|_{L^1}).
 \end{align}
\end{lem}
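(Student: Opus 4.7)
The plan is to estimate the two summands $\cos(t|\na|)v_0$ and $\frac{\sin(t|\na|)}{|\na|}v_1$ of $v_{\text{hom}}$ separately. Starting from the Poisson-type representation \eqref{vhom1}, I would rewrite the $v_1$-part in polar coordinates around $x$ as
\[
\frac{\sin(t|\na|)}{|\na|}v_1(x) = \frac{1}{2\pi}\int_0^t \int_{\mathbb{S}^1} \frac{v_1(x+r\omega)\,r}{\sqrt{t^2-r^2}}\,dr\,d\omega,
\]
and split the $r$-integration into a deep-interior piece $r\le t/2$, where the kernel is bounded by $2/t$ and the contribution is immediately controlled by $t^{-1}\|v_1\|_{L^1}\ls t^{-1/2}\Lg t-|x|\Rg^{-1/2}\|v_1\|_{L^1}$, and a near-cone piece $r\in(t/2,t)$ where the kernel behaves like $(t(t-r))^{-1/2}$ and requires a more delicate treatment.

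For the near-cone piece I would distinguish whether $\Lg t-|x|\Rg$ is large or small. When $|x|\le t/2$ or $|x|\ge 2t$ we have $\Lg t-|x|\Rg \sim t$, and a direct estimate combined with a one-derivative Littlewood--Paley decomposition of $v_1$ (absorbing the logarithmic integral $\int_{t/2}^t(t-r)^{-1}dr$) yields the desired bound; here is where $\Lg \na \Rg v_1 \in L^1$ enters. In the genuinely critical regime $t/2\le |x|\le 2t$ with $\Lg t-|x|\Rg$ possibly small, I would substitute $r=t\sin\phi$ to rewrite the near-cone integral as
\[
\frac{t}{2\pi}\int_0^{\pi/2}\sin\phi\,\int_{\mathbb{S}^1}v_1(x+t\sin\phi\,\omega)\,d\omega\,d\phi,
\]
and then extract the sharp $t^{-1/2}\Lg t-|x|\Rg^{-1/2}$ decay by analyzing the transversal intersection of the moving circle $\{|y-x|=t\sin\phi\}$ with the annulus $\{|y|\sim t\}$ on which $\Lg y\Rg^{1/2}\sim t^{1/2}$: the $t^{-1/2}$ comes from the arc-length of the intersection, while the $\Lg t-|x|\Rg^{-1/2}$ comes from a weighted Cauchy--Schwarz that uses precisely the $\Lg y\Rg^{1/2}$ factor built into the data norm.

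To treat $\cos(t|\na|)v_0$, I would exploit $\cos(t|\na|)v_0=\partial_t\bigl[\tfrac{\sin(t|\na|)}{|\na|}v_0\bigr]$ and differentiate the representation formula directly in $t$; after a single integration by parts in $r$ one can distribute the $\partial_t$ onto a spatial gradient of $v_0$, at the price of a boundary term that is again a sine-type integral with one additional derivative on $v_0$. This reduces the cosine estimate to the sine estimate applied to $\na v_0$ (plus a zeroth-order piece) and accounts for the $\Lg \na \Rg^2 v_0$ norm on the right-hand side of \eqref{BB2}. The main technical obstacle is the sharp near-cone analysis in the regime $t/2\le|x|\le 2t$, where the two decay factors $t^{-1/2}$ and $\Lg t-|x|\Rg^{-1/2}$ must be extracted \emph{simultaneously} from weighted $L^1$ data; this is essentially the sharp two-dimensional dispersive estimate for the half-wave propagator, and requires careful geometric bookkeeping of how the $\Lg y\Rg^{1/2}$ weight interacts with the light-cone geometry.
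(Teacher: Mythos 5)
Your overall architecture (treat the $v_1$- and $v_0$-parts separately, reduce the cosine term to sine-type terms with one extra derivative, work in the physical-space representation with a near-cone/interior split) matches the paper's, but the step that actually carries the lemma — the near-cone analysis — has a genuine gap. In the critical regime $t/2\le|x|\le2t$ your proposed mechanism cannot produce the stated bound. First, the geometric claim is wrong: when $|x|\approx t$, points $y$ near the origin lie on circles $|x-y|=t\sin\phi$ with $\sin\phi$ close to $1$, i.e.\ exactly where the Jacobian $1/\sqrt{t^2-|x-y|^2}$ is singular, and there $\langle y\rangle\sim1$, not $\sim t$; so the factor $t^{-1/2}$ cannot be harvested from ``$\langle y\rangle^{1/2}\sim t^{1/2}$ on the intersection.'' Second, ``weighted Cauchy--Schwarz'' is not available: the data are only controlled in a weighted $L^1$, and there is no square-integrable quantity to pair against. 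Third, and decisively, your critical-regime argument never uses the derivative in $\|\langle y\rangle^{1/2}\langle\nabla\rangle v_1\|_{L^1}$, but the estimate is false without it: letting $v_1$ be a bump concentrating at a point $y_0$ with $|x-y_0|$ extremely close to $t$ makes $\frac{\sin t|\nabla|}{|\nabla|}v_1(x)\approx\frac{1}{2\pi}\bigl(t^2-|x-y_0|^2\bigr)^{-1/2}$ arbitrarily large while $\|\langle y\rangle^{1/2}v_1\|_{L^1}$ stays bounded. Any correct proof must spend one derivative to tame the kernel singularity on the shell $|x-y|$ near $t$; the paper does this by writing $\frac{\rho}{\sqrt{t^2-\rho^2}}=-\partial_\rho\sqrt{t^2-\rho^2}$ on $t-1<|x-y|<t$ and integrating by parts in $\rho$ (plus a fundamental-theorem-of-calculus argument for the boundary term), which is precisely where $\langle\nabla\rangle v_1$ and $\langle\nabla\rangle^2v_0$ enter. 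The weight $\langle y\rangle^{1/2}$ plays a different role than you assign it: via $|t-|x||\le|t-|x-y||+|y|$ it converts decay in the distance to the light cone centered at $x$ into the desired $\langle t-|x|\rangle^{-1/2}$.

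Two smaller inaccuracies point the same way. In the deep-interior piece, $t^{-1}\|v_1\|_{L^1}\lesssim t^{-1/2}\langle t-|x|\rangle^{-1/2}\|v_1\|_{L^1}$ fails when $|x|\gg t$ (there $\langle t-|x|\rangle\gg t$); you must again invoke the weight, using $|y|\ge|x|-|x-y|\gtrsim|t-|x||$ on the support. Likewise, for $|x|\ge2t$ it is not true that $\langle t-|x|\rangle\sim t$, so your Littlewood--Paley dispersive bound $t^{-1/2}$ alone is insufficient there and the weight must supply the extra $\langle t-|x|\rangle^{-1/2}$. These are repairable, but together with the critical-regime issue they show the proposal as written does not close; the missing idea is the radial integration by parts near $|x-y|=t$ (or an equivalent expenditure of one derivative on the data) combined with the triangle-inequality transfer of the weight.
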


\begin{proof}
  For $|x-y|<t$, we have $t+|x-y|\sim t$. We first consider the following integral $\int_{|x-y|<t}\frac{v_1(y)\ dy}{\sqrt{t^2-|x-y|^2}}$. We denote $r=|x|$ and then consider several cases:
  
  {\bf Case 1.} $|x-y|<t-1$.
  We have 
  \begin{align}
      |t-r|\lesssim \Lg t-|x-y|\Rg\Lg y\Rg\lesssim (t-|x-y|)\Lg y\Rg
      \quad \Rightarrow \quad  
      \frac1{\sqrt{t^{2}-|x-y|^2}}\lesssim t^{-\frac12}\Lg t-r\Rg^{-\frac12}\Lg y\Rg^{\frac12}.
  \end{align}
 This implies  
 \begin{align}
     \int_{|x-y|<t-1}\frac{v_{1}(y)dy}{\sqrt{t^2-|x-y|^2}}
     \lesssim  t^{-\frac12}\Lg t-|x|\Rg^{-\frac12}\|\Lg y\Rg^{\frac12}\nabla^{\leq 1}v_{1}\|_{L^1}.
 \end{align}
 {\bf Case 2.} $t-1<|x-y|<t$. In this case we have $t-|x|\le |y|+1$ and $|x|-t\le |y|$, therefore $|t-r|\ls \Lg y\Rg$.
 \begin{align}
    &\int_{t-1<|x-y|<t}\frac{v_{1}(y)dy}{\sqrt{t^2-|x-y|^2}}
    \\ =&\int_{t-1}^{t}\int_{|\omega|=1}\frac{v_{1}(x+\rho\omega)}{\sqrt{t^2-\rho^2}}\rho d\rho d\omega=-\int_{t-1}^{t}\int_{|\omega|=1}v_{1}(x+\rho\omega)\pa_{\rho}(\sqrt{t^2-\rho^2})d\rho d\omega
     \\=&\int_{|\omega|=1}v_{1}(x+(t-1)\omega) \sqrt{2t-1}  d\omega +\int_{t-1}^{t}\int_{|\omega|=1}\sqrt{t^2-\rho^2}(\nabla v_{1})(x+\rho\omega)\cdot\omega d\rho d\omega\\
     \coloneqq&J_1+J_2.
 \end{align}
 For $J_{1}$, note that by $\Lg t-r\Rg\ls \Lg x+(t-1) \omega\Rg$ we have 
 \begin{align}
   J_{1}\lesssim &\Lg t-r\Rg^{-\frac12} \int_{|\omega|=1}\left|v_1(x+(t-1)\omega)\Lg x+(t-1)\omega\Rg^{\frac12}\sqrt{2t-1}\right|\ d\omega\\
\ls&\Lg t-r\Rg^{-\frac12} \int_{|\omega|=1}\Big|\int_{t-1}^{\infty}\frac{d}{d\rho}\left(v_{1}(x+\rho\omega)\Lg x+\rho\omega\Rg^{\frac12}\sqrt{2\rho+1}\right)d\rho \Big| d\omega
   %\\ \lesssim & \Lg t-r\Rg^{-\frac12} \int_{|\omega|=1}\left|\int_{t-1}^{\infty}\left((\nabla v_{1})(x+\rho \omega)\cdot\omega\sqrt{2\rho+1}+v_{1}(x+\rho\omega)\frac{1}{\sqrt{2\rho+1}}\right)d\rho\right| d\omega
   \\ \lesssim &\Lg t-r\Rg^{-\frac12}  \ t^{-\frac12}\|\Lg y\Rg^{\frac12}\Lg\nabla\Rg v_{1}\|_{L^1}.
 \end{align}
 For $J_{2}$, we have 
 \begin{align}
    J_{2}\lesssim & \ t^{-\frac12}\int_{t-1}^{t}\int_{|\omega|=1}|(\nabla v_{1})(x+\rho\omega)|\rho d\rho d\omega \\ \lesssim &\  t^{-\frac12}\Lg t-r\Rg^{-\frac12}\|\Lg y\Rg^{\frac12}\nabla v_{1}\|_{L^1} \quad (\text{since }|t-r|\lesssim |t-|x-y||+|y|\lesssim \Lg y\Rg ).
 \end{align}
  Therefore, 
  \begin{align}
    \int_{t-1<|x-y|<t}\frac{v_{1}(y)dy}{\sqrt{t^2-|x-y|^2}}
    \lesssim  t^{-\frac12}\Lg t-r\Rg^{-\frac12}\|\Lg y\Rg^{\frac12}\Lg \nabla\Rg v_{1}\|_{L^1}.
 \end{align}
 Now we consider the second part. Let $z=\frac{y-x}t$, then 
 \begin{align}
   &\pa_{t}\int_{|x-y|<t}\frac{v_{0}(y)dy}{\sqrt{t^2-|x-y|^2}}
   =\pa_{t}\int_{|z|<1}\frac{t v_{0}(x+tz)dz}{\sqrt{1-|z|^2}}
  \\ =& \int_{|z|<1}\frac{ v_{0}(x+tz)+t(\nabla v_{0})(x+tz)\cdot z}{\sqrt{1-|z|^2}}
  \\ =&\ t^{-1}\int_{|x-y|<t}\frac{ v_{0}(y)}{\sqrt{t^2-|x-y|^2}} dy+\int_{|x-y|< t}\frac{ \nabla v_{0}(y)\cdot\frac{y-x}{t}}{\sqrt{t^2-|x-y|^2}} dy \qquad (\text{let } y=x+t z).
 \end{align}
 Similarly, we have 
 \begin{align}
    & t^{-1}\int_{|x-y|<t}\frac{ v_{0}(y)}{\sqrt{t^2-|x-y|^2}} dy \lesssim \ t^{-\frac32}\Lg t-r\Rg^{-\frac12}\|\Lg y\Rg^{\frac12}\nabla^{\leq 1}v_0\|_{L^1};\\
    & \int_{|x-y|< t-1}\frac{ \nabla v_{0}(y)\cdot\frac{y-x}{t}}{\sqrt{t^2-|x-y|^2}} dy
    \lesssim \int_{|x-y|< t-1}\frac{ |\nabla v_{0}(y)|}{\sqrt{t^2-|x-y|^2}} dy
    \lesssim t^{-\frac12}\Lg t-r\Rg^{-\frac12}\|\Lg y\Rg^{\frac12}\Lg\nabla\Rg^2 v_0\|_{L^1}.
 \end{align}
 The remaining terms can be estimated as follows:
 \begin{align}
   & \int_{ t-1<|x-y|< t}\frac{ \nabla v_{0}(y)\cdot\frac{y-x}{t}}{\sqrt{t^2-|x-y|^2}} dy\\=& \int_{|\omega|=1}\int_{ t-1}^{ t}\frac{ \nabla v_{0}(x+\rho\omega)\cdot\frac{\rho\omega}{t}}{\sqrt{t^2-\rho^2}} \rho d\rho d\omega
   =-\int_{|\omega|=1}\int_{ t-1}^{ t}  \nabla v_{0}(x+\rho\omega)\cdot\frac{\rho\omega}{t} \pa_{\rho}(\sqrt{t^2-\rho^2} ) d\rho d\omega\\=&\int_{|\omega|=1} \nabla v_{0}(x+(t-1)\omega)\cdot\frac{(t-1)\omega}{t}\sqrt{2t-1}  d\omega\\&+\int_{t-1}^{t}\int_{|\omega|=1}\left[((\omega\cdot\nabla)^2v)(x+\rho\omega)+\frac1\rho (\nabla v_0)(x+\rho\omega)\cdot\omega\right]\frac{\sqrt{t^2-\rho^2}}{t}\rho d\rho d\omega\\ \coloneqq&J_3+J_4.
 \end{align}
 Here we can estimate $J_3$ as below:
 \begin{align}
     J_{3}\lesssim& \Lg t-r\Rg^{-\frac12}\int_{|\omega|=1}|\Lg x+(t-1)\omega\Rg^{\frac12}(\omega\cdot\nabla) v_0(x+(t-1)\omega)\sqrt{2t-1}| d\omega\\ \lesssim & \Lg t-r\Rg^{-\frac12}\int_{|\omega|=1}|\int_{t-1}^{\infty}\frac d{d\rho}\left(\Lg x+\rho\omega\Rg^{\frac12}(\omega\cdot\nabla) v_0(x+\rho\omega)\sqrt{2\rho+1}\right) d\rho| d\omega\\ \lesssim &\Lg t-r\Rg^{-\frac12}\int_{|\omega|=1}|\int_{t-1}^{\infty} \Lg x+\rho\omega\Rg^{-\frac12}|(\omega\cdot \nabla) v_0(x+\rho\omega)|\frac{\sqrt{2\rho+1}}{\rho} \rho d\rho| d\omega
     \\&+\Lg t-r\Rg^{-\frac12}\int_{|\omega|=1}|\int_{t-1}^{\infty} \Lg x+\rho\omega\Rg^{\frac12}\left((\omega\cdot\nabla)^2 v_0(x+\rho\omega)\sqrt{2\rho+1}+(\omega\cdot\nabla) v_0(x+\rho\omega)\frac{1}{\sqrt{2\rho+1}}\right)  d\rho| d\omega 
     \\ \lesssim &\ t^{-\frac12}\Lg t-r\Rg^{-\frac12}\|\nabla v_0\|_{L^1}+t^{-\frac12}\Lg t-r\Rg^{-\frac12}\|\Lg y\Rg^{\frac12}\nabla^2 v_0\|_{L^1}+t^{-\frac32}\Lg t-r\Rg^{-\frac12}\|\Lg y\Rg^{\frac12}\nabla v_0\|_{L^1}.
 \end{align}
 $J_{4}$ can be handled similarly: 
 \begin{align}
     J_{4}\lesssim  t^{-\frac12}\Lg t-r\Rg^{-\frac12}\|\Lg y\Rg^{\frac12}\Lg \nabla\Rg^{ 2}v_0\|_{L^1}.
 \end{align}
 Therefore we conclude \eqref{BB2}
\end{proof}
%\begin{rem}
 %   Note that in the first inequality above, we require that \begin{align}
 %    \lim_{\rho \to \infty} v_1(x+\rho\omega)\cdot\sqrt{2\rho+1}=0,
 %\end{align}
 %however it is generally not true even when $\|\Lg y\Rg^{\frac12}\na^{\leq 1}v_1\|_{L^1(\R^2)}\lesssim 1$. But this issue can be resolved by approaching $v_1$ with a sequence of smooth compactly supported functions. 
%end{rem} 

\section{Verification of the smallness conditions of the Initial data}
In this section we verify the smallness condition \eqref{ID2} is consistent with the vector fields:
\begin{prop}\label{prop}
Assume $(\Phi_0,\Phi_1,N_0,N_1)$ satisfy the condition \eqref{ID2}, then for any $J\le K$ we have
    \begin{align}
     &\|\Ga^{\le J}\phi|_{t=0}\|_2\lesssim \big(\|\Lg x\Rg^J \Lg \na\Rg^J  \Phi_0\|_2+\|\Lg x\Rg^J  \Lg \na \Rg^{J-1}\Phi_1\|_2\big)\big(1+\sum_{j=0}^{J-2}\|\Lg x\Rg^{j+1}\na^j (\na N_0,N_1)\|_2\big),\label{B1}\\
    % &\|\Ga^{\le J} n|_{t=0}\|_2\lesssim \sum_{j=0}^{J-1}\|\Lg x\Rg^{j+1}\na^j (\na N_0,N_1)\|_2.\label{B2}
    &\|\Ga^{\le J} n|_{t=0}\|_2\lesssim \big(\|\Lg x\Rg^{J-1} \Lg \na\Rg^{J-1}  \Phi_0\|_2+\|\Lg x\Rg^{J-1}  \Lg \na \Rg^{J-2}\Phi_1\|_2\big)\big(1+\sum_{j=0}^{J-1}\|\Lg x\Rg^{j+1}\na^j (\na N_0,N_1)\|_2\big).\label{B2}
    \end{align}
 In particular, the following also holds:
 \begin{align}
          &\|\pa \Ga^{\le J}\phi|_{t=0}\|_2\lesssim \big(\|\Lg x\Rg^J \Lg \na\Rg^{J+1}  \Phi_0\|_2+\|\Lg x\Rg^J  \Lg \na \Rg^{J}\Phi_1\|_2\big)\big(1+\sum_{j=0}^{J-1}\|\Lg x\Rg^{j+1}\na^j (\na N_0,N_1)\|_2\big),\label{B1a}\\
         % &\|\wtd{\Ga}\Ga^{\le J-1} n|_{t=0}\|_2\lesssim \sum_{j=0}^{J-1}\|\Lg x\Rg^{j+1}\na^j (\na N_0,N_1)\|_2.\label{B2a}
     &\|\wtd{\Ga}\Ga^{\le J-1} n|_{t=0}\|_2\lesssim \big(\|\Lg x\Rg^{J-1} \Lg \na\Rg^{J-1}  \Phi_0\|_2+\|\Lg x\Rg^{J-1}  \Lg \na \Rg^{J-2}\Phi_1\|_2\big)\big(1+\sum_{j=0}^{J-1}\|\Lg x\Rg^{j+1}\na^j (\na N_0,N_1)\|_2\big).\label{B2a}
 \end{align}
Similarly, we also have{
\begin{equation}
\label{B3}
\begin{aligned}
     &\|\Lg x\Rg \log(\Lg x\Rg) \Ga^{\le K}\phi|_{t=0}\|_2\\
     \lesssim&\big(\|\Lg x\Rg^{K+1}\log (\Lg x\Rg) \Lg \na\Rg^{K}  \Phi_0\|_2+\|\Lg x\Rg^{K+1}\log (\Lg x\Rg)   \Lg \na \Rg^{K-1}\Phi_1\|_2\big)\big(1+\sum_{j=0}^{K-2}\|\Lg x\Rg^{j+1}\na^j (\na N_0,N_1)\|_2\big),
\end{aligned}
\end{equation}}
and{
\begin{equation}\label{B4}
    \begin{aligned}
       &\|(\Lg x\Rg^{\frac32}  \Lg \na\Rg^3\wtd{\Ga}^{\le K-4} n_0)|_{t=0}\|_1+\|(\Lg x\Rg^{\frac32}  \Lg \na\Rg^2 \pa_t\wtd{\Ga}^{\le K-4} n_0)|_{t=0}\|_1\\
       \ls& \sum_{j=0}^{K-5}(\|\Lg x\Rg^{j+\frac52} \na^{j+3}(\na N_0,N_1)\|_1+\|\Lg x\Rg^{j+\frac52} \na^{j}(\na N_0,N_1)\|_1).
    \end{aligned}
\end{equation}}
All the constants in the inequalities are exact and computable; for simplicity we identify all quantities related to the initial conditions by $C_i\varepsilon$, where $C_i$ is a fixed exact positive constant that can be computed directly.  

\end{prop}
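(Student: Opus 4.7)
The plan is to reduce every instance of the vector-field operator $\Gamma^{\alpha}$ at the initial time $t=0$ to purely spatial operators (possibly multiplied by powers of $x$), and then use the evolution equations to trade time derivatives for spatial derivatives plus nonlinear tails. The key structural observation is that at $t=0$ the Lorentz boosts collapse:
\begin{align}
\Omega_{i0}\big|_{t=0} = t\partial_i + x_i\partial_t\Big|_{t=0} = x_i\partial_t, \qquad \Omega_{12}\big|_{t=0}=x_1\partial_2-x_2\partial_1.
\end{align}
Consequently, for any multi-index $\alpha$ with $|\alpha|\le J$, one can expand
\begin{align}
\Gamma^{\alpha}\big|_{t=0} = \sum_{|\beta|+k+|\gamma|+l\le |\alpha|,\ |\beta|\le|\alpha|} c_{\alpha,\beta,k,\gamma,l}\, x^{\beta}\,\partial_t^{k}\,\partial_x^{\gamma}\,\Omega_{12}^{l},
\end{align}
so that bounding $\|\Gamma^{\le J}\phi|_{t=0}\|_2$ reduces to bounding $\|\langle x\rangle^{J}\partial_t^{k}\partial_x^{\gamma}\Omega_{12}^{l}\phi|_{t=0}\|_2$ with $k+|\gamma|+l\le J$.

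The second step is to eliminate the time derivatives using the PDE. For the Klein-Gordon field, we iterate the identity
\begin{align}
\partial_t^{2}\phi = \Delta\phi - \phi + B^{\alpha}\phi\,\partial_{\alpha} n
\end{align}
until the remaining power of $\partial_t$ is $0$ or $1$; analogously for $n$ via $\partial_t^2 n = \Delta n + A^{\alpha}\partial_{\alpha}(|\phi|^2)$ (and similarly for $\tilde n$, using $\Box\tilde n=|\phi|^2$). Each such substitution produces a linear part, which at $t=0$ is directly controlled by $\|\langle x\rangle^{J}\langle\nabla\rangle^{J}\Phi_i\|_2$ and $\|\langle x\rangle^{J-1}\langle\nabla\rangle^{J-1}(\nabla N_0,N_1)\|_2$, plus nonlinear tails of the schematic shape $\partial^{a}(\phi\,\partial n)$ or $\partial^{a}(\phi^{2})$ with $a\le J-2$. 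The latter are estimated by Leibniz, Hölder, and Sobolev embedding in $\mathbb{R}^{2}$: split derivatives so that the factor with the larger derivative count sits in $L^2$ (and absorbs all the $\langle x\rangle^{J}$ weight through $\langle x\rangle^{J}=\langle x\rangle^{J_1}\langle x\rangle^{J_2}$ with $J_1+J_2=J$) while the other factor is placed in $L^{\infty}$ via $\|u\|_{\infty}\lesssim \|\langle\nabla\rangle^{2}u\|_{2}$. Since every nonlinear term carries at least one extra factor of either $\phi$ or $(\nabla N_0,N_1)$ evaluated at $t=0$, an induction on $J$ (or on the number of $\partial_t^{2}$ replacements performed) produces the multiplicative structure $(1+\sum_{j}\|\langle x\rangle^{j+1}\nabla^{j}(\nabla N_0,N_1)\|_2)$ on the right-hand side, as claimed in \eqref{B1}--\eqref{B2a}.

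For the weighted and $L^{1}$-type estimates \eqref{B3} and \eqref{B4} the strategy is identical, with two minor twists. In \eqref{B3}, the extra factor $\langle x\rangle\log\langle x\rangle$ is simply carried along through the Leibniz expansion and absorbed into the $L^2$ factor (noting that $\log\langle x\rangle$ is harmless because it is dominated by any positive power of $\langle x\rangle$ when paired with the higher weight in ID1). In \eqref{B4}, we additionally invoke $\widetilde\Gamma$ which contains $L_0=t\partial_t+r\partial_r$; this collapses at $t=0$ to $r\partial_r$, hence introduces one more spatial derivative together with a factor of $|x|$. Applying the same expansion procedure and estimating in $L^{1}(\mathbb{R}^{2})$ instead of $L^{2}$ produces the stated $L^{1}$ bound; note that here no PDE substitution is needed beyond the one for $n_0$ itself, because $n_0$ is a pure linear wave. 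The main obstacle in making this rigorous is the bookkeeping: one must verify that for every cross-term produced by the Leibniz rule the sum of the spatial-weight exponents does not exceed $J$ and the sum of derivative counts does not exceed $J$ (or $J-1$, $J-2$ as appropriate), so that every factor falls inside either the ID1 norm, the ID2 norm, or a lower-order version of the norm already being estimated; this is precisely why the right-hand side of \eqref{B1}--\eqref{B2a} acquires the product form $(\text{linear ID norm})\cdot(1+\text{wave ID norm})$.
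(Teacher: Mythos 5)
Your proposal is correct and follows essentially the same route as the paper's proof: collapse the vector fields at $t=0$ to weighted spatial/time-derivative operators (implicit in the paper's reduction to the ``worst'' term $\|\langle x\rangle^J\partial_t^J\phi|_{t=0}\|_2$, which you make explicit via $\Omega_{i0}|_{t=0}=x_i\partial_t$), eliminate $\partial_t^2$ iteratively via the equations, bound the nonlinear tails by Leibniz plus Hölder and the $H^2(\mathbb{R}^2)\hookrightarrow L^\infty$ embedding, and close by induction on $J$. The only cosmetic difference is in the bookkeeping of the weight $\langle x\rangle^J$: you split it between both factors, whereas the paper keeps it entirely on the $\phi$-factor and instead recovers the weighted $\langle x\rangle^{j+1}\nabla^j(\nabla N_0,N_1)$ norms through the Sobolev embedding applied to the $n$-factor; both bookkeeping choices lead to the stated product structure.
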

\begin{proof}
    The proof relies on an induction that is very similar to \cite{CX22}, therefore we only sketch the idea. In fact the wave filed $n$ is simpler to deal with because of the decomposition $n=n_0+A^\al \pa_\al n$ so that the initial conditions $(N_0,N_1)$ can be taken care of essentially through the homogeneous equation up to some higher order perturbation. For the Klein-Gordon field $\phi$, with no loss, we only consider the ``worst'' scenario as follows:
\begin{align}
    \|\Lg x\Rg^J \partial_t^J\phi|_{t=0}\|_2\lesssim \|\Lg x\Rg^J \partial_t^{J-2}\Delta \phi|_{t=0}\|_2+\|\Lg x\Rg^J \partial_t^{J-2}\phi|_{t=0}\|_2+\|\Lg x\Rg^J\partial_t^{J-2}(\phi \pa n)|_{t=0}\|_2.
\end{align}
Note that by induction assumption we shall have
\begin{align}
    \|\Lg x\Rg^J \partial_t^{J-2}\Delta\phi|_{t=0}\|_2+\|\Lg x\Rg^J \partial_t^{J-2}\phi|_{t=0}\|_2\lesssim
\| \Lg x\Rg^{J}\Lg \na\Rg^J\Phi_0\|_2+\| \Lg x\Rg^{J}\Lg \na\Rg^{J-1}\Phi_1\|_2.
\end{align}
For the nonlinear part, we also only present the worst case:
\begin{align}
    \|\Lg x\Rg^J\partial_t^{J-2}(\phi \pa n)|_{t=0}\|_2\lesssim&\|\Lg x\Rg^J(\partial_t^{\le  J-2}\phi)( \partial_t^{\le \frac{J}{2}}n)|_{t=0}\|_2+\|\Lg x\Rg^J(\partial_t^{\le J-1}n)( \partial_t^{\le \frac{J-2}{2}}\phi)|_{t=0}\|_2\\
    \lesssim&\|\Lg x\Rg^J\partial_t^{\le J-2}\phi |_{t=0}\|_2\|\partial_t^{\le \frac{J}{2}} n|_{t=0}\|_\infty+\|\Lg x\Rg^J\partial_t^{\le \frac{J-2}{2}}\phi |_{t=0}\|_\infty\|\partial_t^{\le {J-1}} n|_{t=0}\|_2.
\end{align}
Then by induction and the standard $H^2(\R^2)\hookrightarrow L^\infty(\R^2)$ embedding we have
\begin{align}
&\|\Lg x\Rg^J\partial_t^{\le J-2}\phi |_{t=0}\|_2\|\partial_t^{\le \frac{J}{2}} n|_{t=0}\|_\infty\\
\lesssim& (\| \Lg x\Rg^{J}\Lg \na\Rg^{J-2}\Phi_0\|_2+\| \Lg x\Rg^{J}\Lg \na\Rg^{J-3}\Phi_1\|_2)\sum_{j=0}^{\frac{J}{2}+1}\|\Lg x\Rg^{j+1}\na^j(\na N_0,N_1)\|_2
\end{align}
and 
\begin{align}
&\|\Lg x\Rg^J\partial_t^{\le \frac{J-2}{2}}\phi |_{t=0}\|_\infty\|\partial_t^{\le {J-1}} n|_{t=0}\|_2\\
\lesssim& (\| \Lg x\Rg^{J}\Lg \na\Rg^{\frac{J+2}{2}}\Phi_0\|_2+\| \Lg x\Rg^{J}\Lg \na\Rg^{\frac{J}{2}}\Phi_1\|_2)\sum_{j=0}^{J-2}\|\Lg x\Rg^{j+1}\na^j(\na N_0,N_1)\|_2.
\end{align}
The other cases follow from similar arguments and we omit the details.

\end{proof}

\section*{Acknowledgement}
X. Cheng was partially supported by the Shanghai ``Super Postdoc" Incentive Plan (No. 2021014), the International
Postdoctoral Exchange Fellowship Program (No. YJ20220071) and China Postdoctoral Science Foundation (Grant No. 2022M710796, 2022T150139). 

% \section*{Declarations}
% \noindent \textbf{Data availability} The author declares that data sharing is not applicable to this article since no datasets were generated or
% analyzed during the current study.

% \bigskip

% \noindent\textbf{Conflict of interest} The author declares no conflict of interest.

\bibliographystyle{abbrv}

\end{document}